\newtheorem{theorem}{Theorem}[section]
\newtheorem{definition}{Definition}[section]
\newtheorem{proposition}{Proposition}[section]
\newtheorem{remark}{Remark}[section]
\newtheorem{lemma}{Lemma}[section]
\newtheorem{corollary}{Corollary}[section]
\newcommand{\rd}{{\text{\rm d}}}
\newcommand{\bv}{\mathbf v}
\newcommand{\bu}{\mathbf u}
\newcommand{\bF}{\mathbb F}
\newcommand{\f}{\mathbf f}
\newcommand{\z}{\mathbf z}
\newcommand{\x}{\mathbf x}
\newcommand{\bw}{\mathbf w}
\newcommand{\dist}{\mbox{dist}}
\newcommand{\mM}{\mathcal M}
\newcommand{\mB}{\mathcal B}
\newcommand{\mC}{\mathcal C}
\newcommand{\mF}{\mathcal F}
\newcommand{\mP}{\mathcal P}
\newcommand{\mY}{\mathcal Y}
\newcommand{\mU}{\mathcal U}
\newcommand{\wsconv}{\stackrel{*}{\rightharpoonup}}
\newcommand{\wconv}{\stackrel{w}{\rightarrow}}
\begin{document}

\title{On the convergence of statistical solutions of the 3D
Navier-Stokes-$\alpha$ model as $\alpha$ vanishes}
\author{Anne C. Bronzi 
\thanks {annebronzi@gmail.com.}
\and Ricardo M. S. Rosa \thanks {rrosa@im.ufrj.br}}


\date{}
\maketitle

\begin{abstract}
In this paper statistical solutions of the 3D Navier-Stokes-$\alpha$ model with
periodic boundary condition are considered. It is proved that under certain
natural conditions statistical solutions of the 3D Navier-Stokes-$\alpha$ model
converge to statistical solutions of the exact 3D Navier-Stokes equations as
$\alpha$ goes to zero. The statistical solutions considered here arise as
families of time-projections of measures on suitable trajectory spaces.
\end{abstract}

{\bf Keywords.} Statistical solutions, Navier-Stokes equations,
Navier-Stokes-$\alpha$ model
\smallskip

{\bf 2010 Mathematics Subject Classification.} 76D06, 35Q30, 76D05, 35Q35

\section{Introduction}

In this work we investigate the convergence of statistical solutions of the
three-dimensional Navier-Stokes-$\alpha$ model to statistical solutions of the
three-dimensional
Navier-Stokes equations, as $\alpha$ goes to zero. We consider the equations
with periodic boundary conditions and zero space average, and the statistical
solutions are considered in the sense recently introduced by Foias, Rosa and
Temam in
\cite{FRT,frtcr2010a}. 

Most of the knowledge concerning turbulent flows are rooted in heuristic and
phenomenological arguments. One of the fundamental observations is that,
although quite irregular, turbulent flows display a certain order in a
statistical sense, so that mean quantities and other low order moments are
usually more regular and predictable. The statistical solutions that we
investigate are mathematical objects used to address the statistical properties
of the flow in a rigorous mathematical way directly from the equations of
motion. 

The Navier-Stokes equations have been widely used as a model for Newtonian
turbulent flows  (see e.g. \cite{Bat53, Hin75, MonYag75, Ten72}) and a rigorous
mathematical formalization of the statistical theory of turbulence came with the
introduction of statistical solutions for these equations. Several statistical
estimates can be obtained rigorously using statistical solutions; see for
instance \cite{foias74, cfm1994, bcfm1995, foias97, fursikov1999, FMRT01b,
RRT}. Therefore, a better understanding of statistical solutions  is crucial for
a rigorous mathematical theory of turbulence.

The concept of statistical solutions was first introduced by Foias and Prodi
\cite{foias72,fp1976} in the early 1970's (see also an earlier
related mathematical work by Hopf \cite{Hopf}). They considered as a statistical
solution a family of measures on the phase space satisfying a Liouville-type
equation together with some regularity conditions. Some years later
Vishik and Fursikov  \cite{vishikfursikov77} introduced a
different type of statistical solutions, given by measures on suitable
trajectory spaces (see also \cite{vishikfursikov78,vishikfursikov88}).  More
recently, in \cite{FRT,frtcr2010a}, Foias, Rosa and Temam elaborated a notion
of statistical solutions which, in some sense, relates the two previous notions
and has better analytical properties than the previous ones. More precisely,
they considered a Borel probability measure on the space $\mathcal C([0,T],
H_w)$ which is carried by the trajectory space of Leray-Hopf weak solutions. The
family of time projections of such a measure gives rise to a statistical
solution in the sense of Foias-Prodi that possess additional analytical
properties. 

We also mention a work due to Capinski and Cutland, \cite{CM92}, preceding the
works of Foias, Rosa and Temam, in which the authors prove, using non-standard
analysis, the existence of space-time statistical solutions (and of individual
weak solutions) defined in the same spirit as that in \cite{FRT,frtcr2010a},
with the main difference that they use a slightly different definition of weak
solutions upon which the statistical solutions are built.

There is also a large literature on the stochastic version of the Navier-Stokes
equations, which relies on some ideas from the Vishik-Fursikov formulation; see e.g.
\cite{flandoli}.

A few other equations have also been considered as suitable models for
turbulent fluid motions in specific aspects. For instance, the
Navier-Stokes-$\alpha$ model is known as a good approximation model for
well-developed turbulent flows (see \cite{cfhotw98,cfhotw99a,cfhotw99b} for the
cases of infinite pipes and channels). Also, the 3D Navier-Stokes-Voigt
equations are known as an appropriated model for direct numerical simulations
of turbulent flows in statistical equilibrium. For a survey on approximating
models of the Navier-Stokes equations and their properties we refer the reader
to the Introduction given in \cite{HLT}. Regarding the importance of statistical
solutions to yield statistical estimates rigorously, we highlight the work of
Ramos and Titi, \cite{RT}. The authors derived statistical properties of the
invariant measures associated with the solutions of the 3D Navier-Stokes-Voigt
equations and established the convergence of probability invariant measures
associated with the 3D Navier-Stokes-Voigt equations to stationary statistical
solution of the 3D Navier-Stokes equations. With this result, they argue,
via statistical estimates obtained rigorously, that the 3D Navier-Stokes-Voigt
is in fact a reliable subgrid scale model for direct numerical simulations of
turbulent flows.

We focus our study on the Navier-Stokes-$\alpha$ model and on the statistical
solution defined by Foias, Rosa and Temam, namely a
family of time projections of a measure carried by the trajectory space of the
Leray-Hopf weak solutions; see Section \ref{sec1.6}. The Navier-Stokes-$\alpha$
model (also known as Camassa-Holm equations) was introduced by  Chen \textit{et
al}, in \cite{cfhotw98}. This model is a regularized approximation of the 3D
Navier-Stokes equations such that, in some terms of the equation, the velocity
field is replaced by a smoother (filtered) velocity field depending on a small
parameter $\alpha>0$; see Section \ref{sec1.4}. As observed in
\cite{foiasholmtiti2001}, this regularized approximation introduces an energy
penalty that inhibits the creation of smaller and smaller excitations below the
length scale $\alpha$. More precisely, it was proved in \cite{foiasholmtiti2001}
that the wavenumber spectrum of the translational kinetic energy for the
Navier-Stokes-$\alpha$ model rolls off as $\kappa ^{-3}$ for $\kappa \alpha>1$
instead of continuing along the Kolmogorov scaling law, $\kappa^{-5/3}$, which
is followed for $\kappa \alpha<1$.

In this article we prove that, under certain natural conditions,  statistical solutions
of the Navier-Stokes-$\alpha$ model converge to statistical solutions of the
Navier-Stokes equations as $\alpha$ goes to zero. We consider both
stationary and time-dependent statistical solutions. We point out the importance
of stationary statistical solutions in the study of turbulence in statistical
equilibrium in time, while, time-dependent statistical
solutions are useful in the study of evolving or decaying turbulence. 

Another important fact is
that the notion of stationary statistical solutions provides a generalization of
the
notion of invariant measures of semigroups. In the case of the 3D-Navier-Stokes
equations this is currently needed since the well-posedness of the
equations has not been established. In the two-dimensional case, in which the
Navier-Stokes equations have a well-defined semigroup, the two notions have been
proved to agree with each other. 

The definition of statistical solution for
the Navier-Stokes-$\alpha$ model is inspired by the corresponding definition for
the
Navier-Stokes equations, being the family of projections in time of a Borel
probability measure in a suitable trajectory space and carried by the set of
individual solutions of the equation. The natural trajectory space for the
Navier-Stokes-$\alpha$ model is $\mC([0,T],H)$, which is included in
$\mC([0,T],H_w)$, which is the natural space for the Navier-Stokes equations, so
that both statistical solutions can be regarded as projections of measures on
the same space. In this way, both statistical solutions can be viewed as
mathematical objects of the same type, allowing a direct comparison between
them, and yielding a natural framework for studying the convergence of the
corresponding statistical solutions as $\alpha$ goes to zero. Of course, since
the Navier-Stokes-$\alpha$ model is well-posed, its solution semigroup induces a
measure in the trajectory space, starting from any initial measure in the phase
space, in such a way that the time-projections of such a measure form indeed a
statistical solution in the sense we consider here (see Section 2.6). It is
expected
that the converse is also true, namely, that any statistical solution for the
Navier-Stokes-$\alpha$ model is the family of projections of a measure induced
by the solution semigroup (as it happens for the two-dimensional Navier-Stokes
equations), but we do not address this issue here.

This is one of a few results about convergence of statistical solutions (see
e.g.
\cite{cw1997,cr2007,RT} for other equations) and it is the first result of
convergence for 
this type of statistical solution, and we believe the main ideas presented here
will be  valuable
for extending the result to other types of approximations (see \cite{BMR}). 

The structure of the paper is as follows. We start with some usual
definitions and basic results concerning the functional-analytic framework. In
Section
\ref{compactness}, we review some facts regarding Borel measures and we state
some compactness results in the space of Borel measures which may not be so
familiar to the reader and which will be used to obtain the convergence of the
measures related with the statistical solutions. Later, in Sections \ref{sec1.3}
and \ref{sec1.4}, we briefly introduce the Navier-Stokes equations and the
Navier-Stokes-$\alpha$ model, focusing on the results that will be
important throughout this paper. In Sections \ref{sec1.5} and \ref{sec1.6},
we recall the definition of statistical solutions for the
Navier-Stokes equations and introduce a notion of statistical solution
for the Navier-Stokes-$\alpha$ model.

The main goals of this paper are presented in the final Section \ref{sec2},
where
we state and prove the convergence theorems for the measures on the
trajectory space (Theorem \ref{mtheo6} and  Corollary
\ref{maincortimedepss}) and for the statistical solutions (Theorem
\ref{mtheo6nv},Corollary \ref{maincortimedepssnv} and Theorem
\ref{mainthmstatss}). This section is divided
into two parts, one for the time-dependent case and the other for the particular
case of stationary statistical solutions. The main condition for the convergence
of a family of statistical solutions of the
$\alpha$-Navier-Stokes equations to a statistical solution of the Navier-Stokes
equations as $\alpha$ goes to zero is that the mean kinetic energy of the family
be uniformly bounded. This provides the tightness of the family of measures that
guarantees the compactness of the family according to the theory of Topsoe
discussed in Section \ref{compactness}. The main work, then, is to show that
this compactness is sufficient to guarantee that the limit family of measures is
a statistical solution of the Navier-Stokes equations. A crucial step in this
proof is to show that the limit measure is carried by the space of Leray-Hopf
weak solutions of the Navier-Stokes equations. This step depends very much on a
result by Vishik, Titi, and Chepyzhov \cite{VTC} on the convergence of
individual solutions. 

\section{Preliminaries}\label{prem}

In this section, we set the notation and provide the definitions and results
that are needed throughout this work. 

\subsection{Functional setting}\label{sec1.1}

Let $\Omega:=\Pi_{i=1}^3(0, L_i)$, where $L_i> 0$, for $i = 1,2,3$, and let
$C^{\infty}_{per}(\Omega;\mathbb R^3)$ represent the space of the infinitely
differentiable functions $\bu$, with
$\bu(\x)=(u_1(\x),u_2(\x),u_3(\x))\in \mathbb R^3$ defined for
$\x=(x_1,x_2,x_3)\in \mathbb R^3$, which are $\Omega$-periodic.
We define the set of periodic test functions with vanishing average as
\[\mathcal V:=\{\bu\in \mathcal C^{\infty}_{per}(\Omega;\mathbb R^3):\nabla
\cdot \bu=0\mbox{ and }\int_\Omega \bu(\x)\rd\x=0\}.\]

Let $H$ be the closure of $\mathcal V$ in $L^2(\Omega;\mathbb R^3)$
and let $V$ be the closure of $\mathcal V$ in $H^1(\Omega;\mathbb R^3)$. 
The inner product and the norm in $H$ are defined, respectively, by
\[(\bu,\bv):=\int_{\Omega}\bu\cdot \bv \rd\x \;\mbox{ and } \quad |\bu|:
=\sqrt{(\bu,\bu)},\]
where $\bu\cdot\bv = \sum_{i=1}^3 u_iv_i$, and in $V$ they are defined by
\[(\!(\bu, \bv)\!):=(\nabla \bu,\nabla \bv) \;\mbox{ and }
 \quad \|\bu\|:=\sqrt{(\!(\bu,\bu)\!)},\]
where it is understood that $\nabla \bu=(\partial u_i/\partial x_j)_{i,j=1}^
3$ and that the second term is the integral of the componentwise product between
$\nabla \bu$ and $\nabla \bv$. We also consider the space $H$ endowed with its
weak topology, which we
denote by $H_w$.

Let $A$ be the \textbf{Stokes operator} defined as $A=-\mathbb P \Delta$, where
$\mathbb P: L^2(\Omega)^3\rightarrow H$ is the Leray-Helmholtz projection,
i.e., the orthogonal projector in  $L^2(\Omega)^3$ onto the subspace of
divergence-free vector fields. We
denote by $D(A)$  the domain of $A$, which is defined as the set of functions
$\bu \in V$ such that $A\bu\in H$. Recall that, in the periodic case with zero space
average, $A\bu=-\Delta \bu$ for $\bu \in D(A)= V\cap H^2(\Omega)^3$ and $A$ is a
positive self-adjoint operator with compact inverse, so that it has a sequence
$\{\lambda_i\}_{i\in \mathbb N}$ of
positive eigenvalues counted according to their multiplicity, in increasing
order, associated with an orthonormal basis $\{\bw_i\}_{i\in
\mathbb N}$ in $H$. Furthermore, the Poincar\'e inequality holds, i.e.,
for all $\bu\in
V$,
\begin{eqnarray}\label{poincare}
\lambda_1|\bu|^2\leq \|\bu\|^2,
\end{eqnarray}
where $\lambda_1>0$ is the first eigenvalue of the Stokes operator.

Denote by $P_k$ the \textbf{Galerkin projector} defined as the projector onto
the space spanned by the eigenfunctions associated with
the first $k$ eigenvalues, i.e.,
\[P_k\bu=\sum_{i=1}^k(\bu,\bw_i)\bw_i, \;\forall \bu\in H.\]
There are some important properties of $P_k$ that we want to highlight.
For instance, we have that, for all $\bu\in H$ and for all $k\in
\mathbb N$, $|P_k\bu|\leq |P_{k+1}\bu|$ and $|P_k\bu|\leq |\bu|$. We also
have that $P_k: H_w\rightarrow H$ is continuous.
Indeed, observe that the open sets in $H$ can be characterized by a basis of
neighborhoods
\[\mathcal O(\bu,r)=\{\bw\in H: |\bu-\bw|<r\},\]
for $\bu\in H$ and $r>0$. On the other hand, the collection of open sets in
$H_w$ has a characterization
by a basis of neighborhoods given by
\[\mathcal O_w(\bu,r,\bv_1,\ldots,\bv_N)=\{\bw\in H_w:
\sum_{i=1}^N|(\bu-\bw,\bv_i)|^2<r^2\},\]
for $\bu\in H_w$, $r>0$, $N\in \mathbb N$, and $\bv_1, \ldots, \bv_N\in H$.
Notice that if we prove that for all $\bu\in H_w$ and $r >0$ the set 
$P_k^{-1}(\mathcal O(P_k\bu,r))$ is open in $H_w$, then we obtain
that
$P_k:H_w\rightarrow H$ is continuous. This follows from the fact that 
$P_k^{-1}(\mathcal O(P_k\bu,r))$ is of the form 
\begin{equation*}
\begin{split} 
P_k^{-1}(\mathcal O(P_k\bu,r))&=\{\bw\in H_w: P_k\bw\in
\mathcal
O(P_k\bu,r)\}=\{\bw\in H_w: |P_k\bu-P_k\bw|<r\} \\& =\{\bw\in H_w:
\sum_{i=1}^k|(\bu-\bw,\bw_i)|^2<r^2\},
\end{split}
\end{equation*}
and, hence, is an element of the basis for $H_w$.

For the functional setting  concerning the Navier-Stokes-$\alpha$ model, we
adopt the framework introduced by Vishik, Titi and Chepyzhov in
\cite{VTC}.

We start with the natural space for the solutions of the 
Navier-Stokes-$\alpha$ model. Given an interval $I\subset \mathbb R$, we define
\begin{equation}\label{F}
\mathcal F_I=\{ \z : \z(\cdot) \in  L^2_{loc}(I;V)\cap
L_{loc}^{\infty}(I;H), \partial_t\z(\cdot) \in L^2_{loc}(I;D(A)')\}.
\end{equation}
We endow this space with its natural weak-type topology, which we term \textbf{$
\tau$ topology}, and which can be defined in terms of nets as follows: a net of
functions $\{\z_\gamma\}_\gamma
\subset \mathcal F_I$ converges to a function $\z \in \mathcal F_I$ in the
topology
$\tau$ if for each compact interval  $J\subset I$,
\[\z_\gamma \stackrel{*}{\rightharpoonup}\z \; \mbox{ in
}L^{\infty}(J;H), \quad \z_\gamma \rightharpoonup \z\;\mbox{ in }L^{2}(J;V),
\quad \mbox{and} \quad \partial_t\z_\gamma\rightharpoonup \partial_t \z \mbox{
in }L^{2}(J;D(A)').\]

Consider also the following Banach space 
\begin{equation}\label{Fb}
\mathcal F^b_I=\{ \z : \z(\cdot) \in  L^2_{b}(I;V)\cap L^{\infty}(I;H),
\partial_t\z(\cdot) \in L^2_{b}(I;D(A)')\},
\end{equation}
with norm given by
\[\|\z\|_{\mathcal
F_b}=\|\z\|_{L_b^2(I,V)}+\|\z\|_{L^\infty(I,H)}+\|\partial_t\z\|_{L_b^2(I,D(A)')
}, \]
where
\[\|\z\|_{L_b^2(I,V)}=\sup_{\{t\in I: t+1\in I\}}\int_t^{t+1}\|\z(s)\|^2ds,\]
and
\[\|\partial_t\z\|_{L_b^2(I,D(A)')}=\sup_{\{t\in I: t+1\in
I\}}\int_t^{t+1}\|\partial_t\z(s)\|_{ D(A)'}^2ds.\]

Finally, we introduce a natural space for the solutions of the Navier-Stokes
equations, which is the space $\mathcal C_{loc}(I,H_w)$ of continuous functions
from an interval $I\subset \mathbb R$ to $H_w$,
where, as defined above, $H_w$ stands for the space $H$ endowed with the weak topology. This space
can also be seen as the space of weakly
continuous function from  $I$ to $H$. The topology on $\mC_{loc}(I,H_w)$ is
that of uniform convergence in $H_w$ on compact intervals of $I$.

\subsection{Measures and Compactness on the space of
measures}\label{compactness}
The natural measure space in our framework is the space of Borel probability
measures on $\mC_{loc}(I,H_w)$. Since we are interested in the study of the
convergence of family of measures we need a compactness result on the space of
measures. For measures on a separable metrizable space there is the well-known
compactness result due to Prohorov \cite{prohorov}. In our case,
however, $\mC_{loc}(I,H_w)$ is not metrizable, so we use a more general
compactness result due to Topsoe (see \cite{Topsoe,Topsoebook,Topsoe2}), which is suitable for our framework.

Let $X$ be a Hausdorff space and let $\mB_X$ be the Borel $\sigma$-algebra on
$X$. The set of finite Borel measures on $X$ is denoted by $\mM(X)$ and the set
of Borel probability measures is denoted by $\mP(X)$. We say
that a measure $\mu$ is \textbf{tight} if for every set $A\in \mB_X$,
\[\mu(A) = \sup \{\mu(K) : K \mbox{ is compact}, \;K \subset
A\}.\]
Denote by $\mathcal M(X;t)$ the set of all Borel finite measures which are
tight, and by $\mathcal P(X;t)$ the set of all measures in $\mathcal
P(X)$ which are tight. If $X$ is a Polish space (i.e., a separable and
completely metrizable space)
then every finite Borel measure is tight, so that $\mM(X;t)=\mM(X)$ and
$\mP(X;t)=\mP(X)$.

We
say that a net $(\mu_\gamma)$ in $\mathcal P(X)$ is \textbf{uniformly tight} if
for every
$\varepsilon>0$ there exists a compact set $K\subset X$ such that
\[\mu_\gamma(X\setminus K)<\varepsilon, \quad \forall \gamma.\]

Let $Y$ be a Hausdorff space and let $F: X\rightarrow Y$ be a continuous
function. For a measure $\mu\in \mathcal M(X)$ we define the 
measure $F\mu$ induced by $\mu$ on $Y$ as $F\mu(E)=\mu(F^{-1}(E))$, for every
Borel set $E\subset Y$. In this setting, the Change of Variables Theorem
(see e.g. \cite{AB}) says that if $\varphi: Y\rightarrow \mathbb R$ is a
$F\mu$-integrable function then $\varphi\circ F$ is
$\mu$-integrable and   
\begin{equation}\label{induced-measure}
\int_X \varphi(F(x))d\mu(x)=\int_Y \varphi(y)dF\mu(y).
\end{equation}

Now, for the result of compactness on the space of
measures we endow the space $\mathcal M(X)$ with the weakest topology for
which the mapping $\mu\mapsto \mu(f)$ is
upper semicontinuous for every $f$ bounded, real-valued, upper semicontinuous
function on $X$, where $\mu(f)$ stands for the integral $\int_Xf(x)d\mu(x)$. 
We use the symbol $\wconv$
to denote the convergence of nets in $\mathcal M(X)$ with respect to this weak
topology. The spaces $\mathcal P(X)$, $\mathcal M(X;t)$, and $\mathcal P(X;t)$
are endowed with the topology inherited from $\mathcal M(X)$.

Recall that a topological space $X$ is \textbf{completely regular} if every
nonempty closed set and every singleton disjoint from it can be separated by a
continuous function.

In  \cite{Topsoe}, Topsoe proved a result of compactness on
the space of measures on an abstract space $X$. In the case $X$ is a topological
Haudorff space the result is reduced to the following(see \cite[Theorem 9.1]{Topsoebook}): 

\medskip
\begin{theorem}\label{reformulated}
Let $X$ be a Hausdorff space. Let $(\mu_\gamma)$ be a net in
$\mathcal P(X;t)$ which is uniformly tight. Then, there
exist $\mu\in \mathcal P(X;t)$ and a subnet  $(\mu_{\gamma_\beta})$ such that
$\mu_{\gamma_\beta}
\wconv\mu$.
\end{theorem}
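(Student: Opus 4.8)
The plan is to mimic the classical proof of Prohorov's theorem, with two adjustments forced by $X$ being merely Hausdorff: nets replace sequences throughout, and the limit measure is produced not by an explicit construction on the state space but through Topsoe's representation of tight Borel measures by monotone, modular, $\tau$-smooth set functions on the open sets. First I would exploit uniform tightness: for each $n\in\mathbb N$, apply the definition with $\varepsilon=1/n$ and take the union of the compacts obtained for $1,\dots,n$, producing an increasing sequence of compact sets $K_1\subseteq K_2\subseteq\cdots$ with $\mu_\gamma(X\setminus K_n)<1/n$, hence $\mu_\gamma(K_n)>1-1/n$, for all $\gamma$ and all $n$.

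Next I would extract the subnet. Letting $\mathcal K$ be the family of all compact subsets of $X$, regard $\gamma\mapsto(\mu_\gamma(K))_{K\in\mathcal K}$ as a net in the cube $[0,1]^{\mathcal K}$, which is compact by Tychonoff's theorem; choose a convergent subnet $(\mu_{\gamma_\beta})_\beta$ with limit $\kappa\in[0,1]^{\mathcal K}$, so that $\mu_{\gamma_\beta}(K)\to\kappa(K)$ for \emph{every} compact $K$. Passing the finite-measure identity $\mu_{\gamma_\beta}(K\cup K')+\mu_{\gamma_\beta}(K\cap K')=\mu_{\gamma_\beta}(K)+\mu_{\gamma_\beta}(K')$ and the monotonicity $\mu_{\gamma_\beta}(K)\le\mu_{\gamma_\beta}(K')$ (for $K\subseteq K'$) to the limit shows that $\kappa$ is a monotone, modular set function on $\mathcal K$ with $\kappa(\emptyset)=0$ and $\kappa(K_n)\ge1-1/n$. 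Now set $\nu(G):=\sup\{\kappa(K):K\in\mathcal K,\ K\subseteq G\}$ for $G$ open. I would check that $\nu$ is monotone; that it is $\tau$-smooth, $\nu(\bigcup_i G_i)=\sup_i \nu(G_i)$ for up-directed families of open sets (immediate, since a compact subset of $\bigcup_i G_i$ lies in a finite subunion, hence in one $G_i$); and that it is modular, using modularity and monotonicity of $\kappa$ together with the shrinking lemma for compact Hausdorff spaces (a compact set inside $G_1\cup G_2$ splits as $A_1\cup A_2$ with $A_i$ compact and $A_i\subseteq G_i$). By Topsoe's extension theorem (\cite{Topsoe,Topsoebook}) such a $\nu$ extends to a tight Borel measure $\mu$ on $X$ with $\mu(G)=\nu(G)$ on open sets. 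Since $\mu(X)=\nu(X)\ge\sup_n\kappa(K_n)\ge1$ while $\mu(X)\le1$ (as $\kappa\le1$), we get $\mu\in\mathcal P(X;t)$; and for any compact $C$, $\mu(C)=\inf\{\nu(G):C\subseteq G\text{ open}\}\ge\kappa(C)$ (take $K=C$).

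Finally I would verify that $\mu_{\gamma_\beta}\wconv\mu$. By the portmanteau characterization of Topsoe's weak convergence for probability measures it suffices to prove $\limsup_\beta\mu_{\gamma_\beta}(F)\le\mu(F)$ for every closed set $F$ (the total masses already agree). Fix $n$; since $F\cap K_n$ is compact,
\[\mu_{\gamma_\beta}(F)\le\mu_{\gamma_\beta}(F\cap K_n)+1/n\ \longrightarrow\ \kappa(F\cap K_n)+1/n\le\mu(F\cap K_n)+1/n\le\mu(F)+1/n,\]
so $\limsup_\beta\mu_{\gamma_\beta}(F)\le\mu(F)+1/n$ for all $n$, and letting $n\to\infty$ completes the argument.

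The main obstacle is the passage from the inner content $\nu$ on open sets to an honest \emph{countably additive} tight Borel probability measure $\mu$: verifying precisely the hypotheses of Topsoe's extension theorem — modularity is the delicate one, requiring the compact-splitting lemma — and, conceptually, recognizing that uniform tightness is exactly the ingredient that prevents mass from escaping in the limit, so that $\mu$ remains a probability measure. This is the step that genuinely uses the hypothesis, and it is also the step for which the non-metrizability of $X$ forces the use of Topsoe's set-function machinery in place of the elementary Prohorov argument (a compact-case shortcut via the Riesz representation theorem and Banach--Alaoglu is available on each $K_n$, but gluing those pieces back together is itself delicate, so the inner-content route is preferable).
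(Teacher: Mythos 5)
The paper itself does not prove Theorem \ref{reformulated}: the statement is quoted from Topsoe (Theorem 9.1 of \emph{Topology and Measure}), so there is no internal proof to compare against. Your argument is essentially a correct reconstruction of the standard proof of that result, i.e.\ a Prohorov-type argument adapted to Hausdorff spaces: extract a subnet along which $\mu_{\gamma_\beta}(K)$ converges for every compact $K$ (Tychonoff), pass monotonicity and modularity to the limit set function $\kappa$, form the inner content $\nu(G)=\sup\{\kappa(K):K\subseteq G\}$ on open sets, extend it to a tight Borel measure, and verify the closed-set portmanteau inequality using the uniform-tightness compacts $K_n$. The individual steps check out; in particular the splitting of a compact $K\subseteq G_1\cup G_2$ into compacts $A_i\subseteq G_i$ is valid in any Hausdorff space (normality of the compact $K$), and your modularity claim for $\nu$ does go through, e.g.\ with the pairing $A_1\cup K'$, $A_2\cup K'$ for a compact $K'\subseteq G_1\cap G_2$.

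Two points need more care than you give them. First, the hypotheses you attribute to ``Topsoe's extension theorem'' (monotone, modular, $\tau$-smooth on open sets) are not by themselves enough to produce a \emph{tight} Borel extension: the restriction to open sets of a $\tau$-smooth non-Radon measure satisfies all three. What makes the construction work is the compact inner regularity built into your $\nu$; concretely one passes to $\tilde\kappa(K):=\inf\{\nu(G):K\subseteq G \mbox{ open}\}$ and verifies the self-tightness condition $\tilde\kappa(K')\le\tilde\kappa(K)+\sup\{\tilde\kappa(L):L\subseteq K'\setminus K,\ L \mbox{ compact}\}$ demanded by the inner-extension theorems in Topsoe's book (this does hold here: pick $G\supseteq K$ with $\nu(G)\le\tilde\kappa(K)+\varepsilon$ and cover $K'$ by $G$ together with a small open neighbourhood of the compact set $K'\setminus G$, using subadditivity of $\nu$). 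Second, the reduction of the upper-semicontinuous form of weak convergence to the closed-set inequality should be done by approximating a bounded u.s.c.\ $f\ge 0$ from below by the finite sums $\frac{M}{m}\sum_{k=1}^m\chi_{\{f\ge kM/m\}}$, not by a layer-cake/Fatou argument, since Fatou's lemma fails for uncountable nets; together with the one-line fact that a finite tight measure is outer regular on compact sets (apply inner regularity to the complement), your final estimate $\limsup_\beta\mu_{\gamma_\beta}(F)\le\mu(F)+1/n$ is then justified. So the strategy is correct and is the expected one; the only real weakness is that the extension step rests on a shorthand statement of Topsoe's construction theorem whose actual hypotheses (the tightness/inner-regularity condition) you should state and check, even though your $\nu$ does satisfy them.
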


Topsoe also proved, in \cite{Topsoebook}, the following result:

\medskip
\begin{lemma}\label{portmanteau}Let $X$ be a completely regular Hausdorff space.
For a net $(\mu_\gamma)$ in $\mathcal M(X)$ and $\mu\in \mathcal M(X, t)$,
the following statements are equivalent
\begin{itemize}
 \item [(1)]$\mu_\gamma \wconv\mu$;
 \item [(2)]$\limsup \mu_\gamma (f)\leq \mu(f)$, for all $f$ bounded upper
semicontinuous function;
 \item [(3)]$\liminf \mu_\gamma (f)\geq \mu(f)$, for all $f$ bounded lower
semicontinuous function;
 \item [(4)]$\lim\mu_\gamma(f)=\mu(f)$, for all bounded continuous 
function $f$.
\end{itemize}
\end{lemma}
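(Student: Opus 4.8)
The plan is to prove the four statements equivalent by isolating the parts that are purely formal, namely $(1)\Leftrightarrow(2)\Leftrightarrow(3)$ and $(2)\wedge(3)\Rightarrow(4)$, and then putting all the real work into the single implication $(4)\Rightarrow(3)$ (equivalently $(4)\Rightarrow(1)$), which is the only place where complete regularity of $X$ and tightness of $\mu$ are actually used.

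First, $(1)\Leftrightarrow(2)$ is just an unravelling of the definition of the topology on $\mM(X)$: by construction a subbasis of open sets for that topology is formed by the sets $\{\nu\in\mM(X):\nu(f)<a\}$ with $f$ bounded upper semicontinuous and $a\in\mathbb R$, and since a net is directed, $\mu_\gamma\wconv\mu$ holds iff for every such $f$ and every $a>\mu(f)$ one has $\mu_\gamma(f)<a$ eventually, which says exactly $\limsup_\gamma\mu_\gamma(f)\le\mu(f)$. For $(2)\Leftrightarrow(3)$ one replaces $f$ by $-f$: if $f$ is bounded lower semicontinuous then $-f$ is bounded upper semicontinuous, and since the measures are finite, $\mu_\gamma(-f)=-\mu_\gamma(f)$ and $\mu(-f)=-\mu(f)$, so $\limsup_\gamma\mu_\gamma(-f)\le\mu(-f)$ becomes $\liminf_\gamma\mu_\gamma(f)\ge\mu(f)$. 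Finally $(2)\wedge(3)\Rightarrow(4)$ is immediate, since a bounded continuous $f$ is both upper and lower semicontinuous, whence $\limsup_\gamma\mu_\gamma(f)\le\mu(f)\le\liminf_\gamma\mu_\gamma(f)$. Thus only $(4)\Rightarrow(3)$ remains.

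For $(4)\Rightarrow(3)$ I would work in two steps. \emph{Step one: from bounded continuous functions to open sets.} I claim $(4)$ implies $\liminf_\gamma\mu_\gamma(U)\ge\mu(U)$ for every open $U\subset X$. Given $\varepsilon>0$, tightness of $\mu$ yields a compact $K\subset U$ with $\mu(K)>\mu(U)-\varepsilon$. Since $X$ is completely regular Hausdorff, for each $p\in K$ there is a continuous $h_p\colon X\to[0,1]$ with $h_p(p)=1$ and $h_p\equiv0$ on the closed set $X\setminus U$; the sets $\{h_p>1/2\}$ cover the compact $K$, and from a finite subcover $\{h_{p_i}>1/2\}_{i=1}^m$ one forms the continuous function $h:=\min\{1,2\max_{1\le i\le m}h_{p_i}\}$, which satisfies $\mathbf 1_K\le h\le\mathbf 1_U$. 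Then $\mu_\gamma(U)\ge\mu_\gamma(h)$ for all $\gamma$, and by $(4)$, $\liminf_\gamma\mu_\gamma(U)\ge\lim_\gamma\mu_\gamma(h)=\mu(h)\ge\mu(K)>\mu(U)-\varepsilon$; letting $\varepsilon\to0$ proves the claim. \emph{Step two: from open sets to lower semicontinuous functions.} Applying $(4)$ to constants gives $\mu_\gamma(X)\to\mu(X)$, so after an affine normalization we may assume the bounded lower semicontinuous $g$ satisfies $0\le g\le1$. Set $g_n:=\tfrac1n\sum_{k=1}^n\mathbf 1_{\{g>k/n\}}$, so $g_n\le g\le g_n+\tfrac1n$ pointwise and each $\{g>k/n\}$ is open. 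Using that a $\liminf$ of a finite sum is at least the sum of the $\liminf$'s (valid for nets, in contrast to Fatou's lemma for a single sequence),
\[
\liminf_\gamma\mu_\gamma(g)\ \ge\ \liminf_\gamma\mu_\gamma(g_n)\ \ge\ \tfrac1n\sum_{k=1}^n\liminf_\gamma\mu_\gamma(\{g>k/n\})\ \ge\ \tfrac1n\sum_{k=1}^n\mu(\{g>k/n\})\ \ge\ \mu(g)-\tfrac1n\mu(X),
\]
and letting $n\to\infty$ gives $(3)$. Combining, $(4)\Rightarrow(3)\Rightarrow(2)\Rightarrow(4)$ together with $(1)\Leftrightarrow(2)$ yields the full equivalence.

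The main obstacle is the first step of $(4)\Rightarrow(3)$: for a general closed set one cannot squeeze a continuous function between its indicator and a slightly larger open set — that would demand normality of $X$ — so tightness of $\mu$ is invoked to replace the set by a compact one, after which complete regularity supplies the required Urysohn-type function through a finite-subcover argument. A secondary subtlety, easy to miss, is that Fatou's lemma fails for nets, which is why the layer-cake decomposition must be truncated at finitely many levels $k/n$ so that only finite sums and the trivial superadditivity of $\liminf$ on finite sums are used; the harmless normalization $0\le g\le1$ likewise quietly depends on $(4)$ for constant functions to control $\mu_\gamma(X)$.
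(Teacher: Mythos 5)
Your proof is correct, but note that the paper does not actually prove this lemma: it is quoted from Topsoe's monograph (\emph{Topology and Measure}), so there is no internal proof to compare against, and what you have supplied is a self-contained portmanteau-type argument that the paper delegates to the literature. Your structure is sound: the equivalence $(1)\Leftrightarrow(2)\Leftrightarrow(3)$ and $(2)\wedge(3)\Rightarrow(4)$ are indeed purely formal consequences of the definition of the weak topology on $\mathcal M(X)$ (whose subbasic open sets are exactly $\{\nu:\nu(f)<a\}$ for $f$ bounded upper semicontinuous, with directedness of the net handling finite intersections), and all the substance is correctly concentrated in $(4)\Rightarrow(3)$, where tightness of $\mu$ replaces a closed set by a compact one and complete regularity supplies the Urysohn-type function $\mathbf 1_K\le h\le\mathbf 1_U$ via a finite subcover; your finite layer-cake truncation $g_n=\tfrac1n\sum_{k=1}^n\mathbf 1_{\{g>k/n\}}$, using only superadditivity of $\liminf$ on finite sums, is precisely the right device to sidestep the failure of Fatou's lemma for nets, and the affine normalization is correctly justified by applying $(4)$ to constants. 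Two small points worth making explicit: the monotonicity steps ($\mu_\gamma(U)\ge\mu_\gamma(h)$, $\liminf\mu_\gamma(g)\ge\liminf\mu_\gamma(g_n)$) use that the elements of $\mathcal M(X)$ are nonnegative measures, which matches the paper's convention; and in the degenerate case $U=X$ the separation argument is unnecessary since $(4)$ applied to $f\equiv 1$ already gives $\mu_\gamma(X)\to\mu(X)$. With those remarks, the argument stands as a complete and somewhat more elementary substitute for the citation, at the cost of proving only the completely regular case stated in the lemma rather than Topsoe's more general formulation mentioned in the remark following it.
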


\medskip
\begin{remark}
The last lemma was actually stated in a more general setting. For instance, if
$X$ is only
Hausdorff and $\mu\in \mathcal M(X)$ then the first three statements are
equivalent and each of them implies the last one. If $X$ is a completely regular
Hausdorff space and $\mu\in \mathcal M(X)$ is $\tau$-smooth (a condition which
is satisfied for every tight measure) then all the statements are equivalent. 
\end{remark}

Observe that, by Theorem \ref{reformulated},  if  $(\mu_\gamma)$ is a net in 
$\mathcal P(X;t)$ which is uniformly tight then there exists a subnet that
converges to a  limit $\mu$ which is tight. Moreover, by Lemma
\ref{portmanteau}, if $X$ is a completely regular Hausdorff space, the
convergence in the weak topology in $\mathcal P(X;t)$ is equivalent to the usual
convergence  $\mu_{\gamma_\beta}(f)\rightarrow
\mu(f)$, for every $f\in \mathcal C_b(X)$, denoted by $\mu_{\gamma_\beta}\wsconv
\mu$, where $\mathcal C_b(X)$ denotes the
set of bounded, continuous, real-valued functions defined on $X$.

We state the last observation as the next theorem:

\medskip
\begin{theorem}\label{part-topsoe}
Let $X$ be a completely regular Hausdorff space. Let $(\mu_\gamma)$ be a net in
$\mathcal P(X;t)$ which is uniformly tight. Then, there exist $\mu\in \mathcal
P(X;t)$ and
a subnet  $(\mu_{\gamma_\beta})$ such that $\mu_{\gamma_\beta}\wsconv
\mu$, i.e.,
\[\lim_\beta\mu_{\gamma_\beta}(f)=\mu(f), \quad\mbox{ for all  }f\in \mathcal
C_b(X).\]
\end{theorem}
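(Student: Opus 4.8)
This statement is essentially a bookkeeping combination of the two preceding results of Topsoe, so the plan is simply to chain them together; I do not expect any genuine obstacle. First I would invoke Theorem \ref{reformulated}: since $(\mu_\gamma)$ is by hypothesis a net in $\mathcal P(X;t)$ that is uniformly tight, that theorem yields a measure $\mu \in \mathcal P(X;t)$ and a subnet $(\mu_{\gamma_\beta})$ such that $\mu_{\gamma_\beta} \wconv \mu$ in the weak topology on $\mathcal M(X)$ (the one for which $\nu \mapsto \nu(f)$ is upper semicontinuous for every bounded upper semicontinuous $f$).

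The second step is to upgrade this weak-topology convergence to the convergence $\lim_\beta \mu_{\gamma_\beta}(f) = \mu(f)$ for all $f \in \mathcal C_b(X)$. Here I would apply Lemma \ref{portmanteau}: its hypotheses require $X$ to be a completely regular Hausdorff space, which is assumed, and $\mu \in \mathcal M(X,t)$, which holds because $\mu \in \mathcal P(X;t) \subset \mathcal M(X;t)$. Consequently statements (1) through (4) of that lemma are equivalent, and in particular the implication $(1)\Rightarrow(4)$ applied to the subnet $(\mu_{\gamma_\beta})$ gives precisely $\lim_\beta \mu_{\gamma_\beta}(f) = \mu(f)$ for every bounded continuous real-valued $f$ on $X$. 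This is the definition of $\mu_{\gamma_\beta} \wsconv \mu$, completing the argument.

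The only points that deserve a word of care — rather than a real difficulty — are first that the subnet produced by Theorem \ref{reformulated} is indeed a subnet of the original net $(\mu_\gamma)$, so that Lemma \ref{portmanteau} (whose statement is about an arbitrary net in $\mathcal M(X)$) applies verbatim to it; and second that the limit $\mu$ retains tightness, which is exactly what Theorem \ref{reformulated} asserts and what is needed to place $\mu$ in the domain of validity of Lemma \ref{portmanteau}. Thus the theorem follows immediately, and it is stated separately here mainly because it is the form of the compactness result that will be used repeatedly in Section \ref{sec2}.
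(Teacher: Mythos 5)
Your argument is correct and coincides with the paper's own reasoning: the theorem is stated there precisely as the observation that Theorem \ref{reformulated} produces a tight limit $\mu$ and a convergent subnet, and that Lemma \ref{portmanteau} (the equivalence of its statements, in particular $(1)\Leftrightarrow(4)$, valid since $X$ is completely regular Hausdorff and $\mu$ is tight) converts the weak-topology convergence into $\lim_\beta\mu_{\gamma_\beta}(f)=\mu(f)$ for all $f\in\mathcal C_b(X)$. Nothing is missing.
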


Also in \cite{Topsoebook}, Topsoe stated the following result, which we prove
here with more details.

\medskip
\begin{theorem}
Let $X$ be a Hausdorff space. Then, $\mathcal P(X;t)$ is a Hausdorff space. 
\end{theorem}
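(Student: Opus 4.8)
The plan is to show that $\mathcal{P}(X;t)$ is Hausdorff by showing that two distinct tight Borel probability measures $\mu,\nu$ on $X$ can be separated by disjoint open sets in the weak topology. First I would observe that since $\mu \neq \nu$, there is some Borel set on which they disagree, and by tightness I can upgrade this to a \emph{compact} set $K$ with, say, $\mu(K) > \nu(K)$; choose a real number $a$ strictly between these two values. The key point is that the indicator function $\chi_K$ of a compact set is upper semicontinuous (its superlevel sets $\{\chi_K > t\}$ are either $X$, $K$, or $\emptyset$, and $K$ is closed since $X$ is Hausdorff), so $\chi_K$ is a bounded upper semicontinuous function and hence the map $\lambda \mapsto \lambda(\chi_K) = \lambda(K)$ is upper semicontinuous on $\mathcal{M}(X)$ by the very definition of the weak topology.

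Next I would use this semicontinuous function to build the separating open sets. Consider the sets
\[
U = \{\lambda \in \mathcal{P}(X;t) : \lambda(K) > a\}, \qquad W = \{\lambda \in \mathcal{P}(X;t) : \lambda(K) < a\}.
\]
The set $W$ is open because $\lambda \mapsto \lambda(K)$ is upper semicontinuous, so $\{\lambda : \lambda(K) < a\}$ is open. For $U$, the complementary subtlety is that upper semicontinuity gives open \emph{sublevel} sets, not superlevel sets; here I would exploit that we are working in $\mathcal{P}(X;t)$, where every measure has total mass $1$, so $\lambda(K) > a \iff \lambda(X \setminus K) < 1 - a$, and $X \setminus K$ is open, hence $\chi_{X \setminus K}$ is lower semicontinuous, hence $\lambda \mapsto \lambda(X \setminus K)$ is lower semicontinuous by Lemma \ref{portmanteau}'s underlying principle (or directly: $f$ bounded l.s.c. $\Rightarrow -f$ bounded u.s.c. $\Rightarrow \mu \mapsto \mu(-f) = -\mu(f)$ u.s.c. $\Rightarrow \mu \mapsto \mu(f)$ l.s.c.), so $\{\lambda : \lambda(X\setminus K) < 1-a\}$ is open. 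Thus $U$ and $W$ are both open in $\mathcal{P}(X;t)$, they are clearly disjoint, and $\mu \in U$ while $\nu \in W$.

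The main obstacle — and really the only nontrivial point — is the asymmetry in the definition of the weak topology: it is defined so that integration against bounded u.s.c. functions is u.s.c., which a priori only controls measures of closed/compact sets from above, not from below. The device above (passing to the complement and using the normalization of probability measures) resolves this, and it is precisely why the statement is phrased for $\mathcal{P}(X;t)$ rather than $\mathcal{M}(X;t)$: for general finite measures one would additionally need to separate measures of different total mass, which requires a separate (easy) argument using the constant function $1$. I would close by remarking that this is the only place tightness is genuinely used — to convert the set-theoretic inequality $\mu \neq \nu$ into an inequality on the measure of a \emph{compact} set, to which the weak topology is sensitive.
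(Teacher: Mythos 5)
There is a genuine gap, and it is exactly at the point you flagged as "the only nontrivial point": your set $U=\{\lambda\in\mathcal P(X;t):\lambda(K)>a\}$ is \emph{not} open in the weak topology, and the complement trick does not make it open. The topology is generated by requiring $\lambda\mapsto\lambda(f)$ to be upper semicontinuous for bounded u.s.c.\ $f$; this makes sublevel sets $\{\lambda(K)<c\}$ open (your $W$ is fine), and it makes $\lambda\mapsto\lambda(X\setminus K)$ \emph{lower} semicontinuous, but lower semicontinuity yields open \emph{superlevel} sets $\{\lambda(X\setminus K)>c\}$, not the sublevel set $\{\lambda(X\setminus K)<1-a\}$ that you need. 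Rewriting $\{\lambda(X\setminus K)>c\}$ via $\lambda(X)=1$ just gives back sets of the form $\{\lambda(K)<c'\}$, i.e.\ the same one-sided family; the normalization buys nothing. That the defect is real and not merely a gap in justification is shown by a concrete example: take $X=\mathbb R$, $K=\{0\}$, $a=1/2$. Then $\delta_{1/n}\wconv\delta_0$ in this topology (for any bounded u.s.c.\ $f$ one has $\limsup_n f(1/n)\le f(0)$), yet $\delta_{1/n}(K)=0$ for all $n$ while $\delta_0(K)=1$; hence $\{\lambda:\lambda(K)>1/2\}$ contains $\delta_0$ but is not a neighborhood of it. So your $U$ and $W$ do not separate $\mu$ and $\nu$, and no choice of a single compact set $K$ with thresholds of this kind can, because the topology simply does not contain open sets bounding $\lambda(K)$ from below.

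This is why the paper does not attempt a direct separation by disjoint open sets. It instead uses the characterization that a space is Hausdorff if and only if every net has at most one limit: assuming a net $(\mu_\gamma)$ converges to both $\mu_1$ and $\mu_2$, it combines the semicontinuity inequalities of Lemma \ref{portmanteau} (applied to interiors and closures) with the Hausdorff separation of two disjoint compact sets $K_1\subset E$, $K_2\subset E^c$ by disjoint open sets, to get $\mu_1(K_1)+\mu_2(K_2)\le 1$; taking suprema over such compacts and using tightness gives $\mu_1(E)\le\mu_2(E)$ for every Borel $E$, hence $\mu_1=\mu_2$. Your first step (using tightness to produce a compact set witnessing $\mu\ne\nu$) is sound and is in the same spirit, but the separation step that follows would have to be replaced by an argument of this net-uniqueness type; as written, the proof fails.
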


\begin{proof}
First, recall that a Hausdorff space can be characterized as a topological
space where every net converges to at most one point. Therefore, it is enough 
to prove that if $(\mu_\gamma)_\gamma$ is a net in $\mathcal P(X;t)$ which
converges to two measures $\mu_1, \mu_2 \in \mathcal P(X;t)$,
i.e., $\mu_\gamma \wconv\mu_1$ and $\mu_\gamma \wconv\mu_2$, then
$\mu_1=\mu_2$. Let $A\in \mathcal B_X$, denote by $\mathring A$ the interior
of $A$ and by $\bar A$ the closure of $A$. It is clear that the
characteristic functions $\chi_{\mathring A}$ and $\chi_{\bar A}$ are,
respectively, lower semicontinuous and upper semicontinuous functions.
Therefore, using Lemma \ref{portmanteau}, we obtain that
\[\mu_1(\mathring A)\leq \liminf_\gamma \mu_\gamma(\mathring A)\leq
\limsup_\gamma \mu_\gamma(\bar A)\leq \mu_2(\bar A).\]
Now, let $E\in \mathcal B_X$, and let us prove that $\mu_1(E)\leq \mu_2(E)$. In
order to do so, consider any compact sets $K_1\subset E$ and $K_2\subset E^c$.
Since $X$ is Hausdorff there exist disjoint open sets $A$ and $B$ such that
such that $K_1\subset A$ and $K_2\subset B$. It is clear that $\bar A\subset
X\setminus K_2$. Thus,
\[\mu_1(K_1)\leq \mu_1(A)\leq \mu_2(\bar A)\leq
\mu_2(X\setminus K_2)=1-\mu_2(K_2),\]
which leads us to 
\[\mu_1(K_1)+\mu_2(K_2)\leq 1.\]
Since $K_1$ and $K_2$ are arbitrary compact sets satisfying $K_1\subset E$
and $K_2\subset E^c$, we can take the supremum over all compact
sets $K_1\subset E$ and the supremum  over all compact sets $K_2\subset
E^c$ in the last expression, and we find that 
\begin{equation*}
\sup\{\mu_1(K_1): K_1 \mbox{ is compact}, K_1\subset E \}
+\sup\{\mu_1(K_2):K_2 \mbox{ is compact}, K_2\subset E^c \} \leq 1.
\end{equation*}
Since $\mu_1$ and $\mu_2$ are tight, we conclude that
\[\mu_1(E)+\mu_2(E^c)\leq 1.\]
Thus $\mu_1(E)\leq \mu_2(E)$, for all $E\in \mathcal B_X$. Now, since
$\mu_1(X)=\mu_2(X)=1$ it follows that
$\mu_1=\mu_2$.
\end{proof}

Let $X$ be a completely regular Hausdorff space and $\mu_1, \mu_2\in \mathcal
P(X;t)$. Then, 
\begin{equation}\label{uniq-measu}
\mu_1=\mu_2 \mbox{ if and only if }\int_X\varphi(x)\rd
\mu_1(x)=\int_X\varphi(x)\rd \mu_2(x), \forall \varphi\in \mathcal C_b(X). 
\end{equation}
Indeed, suppose that
$\int_X\varphi(x)\rd
\mu_1(x)=\int_X\varphi(x)\rd \mu_2(x)$, 
for all $\varphi\in \mathcal C_b(X)$. We define the net
$(\mu_\gamma)_\gamma$, where $\mu_\gamma=\mu_2$,
for all $\gamma$. Then, it is clear that $\mu_\gamma \wconv\mu_1$ and
$\mu_\gamma \wconv\mu_2$. Since $\mathcal P(X;t)$ is
Hausdorff we find that $\mu_1=\mu_2$. The other implication is trivial.

\subsection{The Navier-Stokes equations}\label{sec1.3}

We state only the results, properties and estimates that are needed throughout
this work. For a more complete theory
of the Navier-Stokes equations the reader is referred to
\cite{bookcf1988,foiasholmtiti2001,Lady,temam84,temam1995},
and the references therein.

In the estimates below, we will consider certain quantities which are constant with respect to the solutions of the equations, but which may depend on the coefficients of the equations, the forcing terms and the spatial domain. We will call them non-dimensional constants when they are independent of re-scalings of the equations in space and time, hence, in particular, they may depend on the shape of the domain, but not on the size of the domain. Moreover, a non-dimensional constant will be called universal when it does not depend on any of the parameters of the equations, not even on the shape of the domain.

We recall that the Navier-Stokes equations can be written in the following
functional form
\begin{eqnarray}\label{NS}
\bu_t+\nu A\bu+B(\bu,\bu)=\f,
\end{eqnarray}
where $A$ is the Stokes operator and $B(\bu,\bu)=\mathbb P[(\bu\cdot \nabla)
\bu]$, for $\bu\in V$.

The notion of solution that is considered here is the well-known
Leray-Hopf weak solution, which is defined below.

\medskip
\begin{definition}\label{NSweak}Let $\f\in L_{loc}^2(I, V')$. A function
$\bu$ is called a \textbf{Leray-Hopf weak solution} if:
\begin{itemize}
 \item[(i)] $\bu\in L^{\infty}_{loc}(I, H)\cap L_{loc}^2(I, V)\cap \mathcal
C_{loc}(I,H_w)$;
\item[(ii)] $\partial_t\bu \in   L_{loc}^{4/3}(I, V')$;
 \item[(iii)]  $\bu$ satisfies the weak formulation of the Navier-Stokes
equations, i.e., 
\[\bu_t+\nu A\bu+B(\bu,\bu)=\f,\]
in $V'$,  in the sense of distributions on $I$;
 \item[(iv)] $\bu$ satisfies the energy inequality in the sense that for almost
all $t'\in I$ and for all $t\in I$ with $t>t'$,
\begin{eqnarray}\label{energy-ineq}
\frac{1}{2}
|\bu(t)|^2+\nu\int_{t'}^{t}\|\bu(s)\|^2\rd s \leq \frac{1}{2}
|\bu(t')|^2+
\int_{t'}^{t}(\f(s),\bu(s))\rd s;
\end{eqnarray}

 \item[(v)] If $I$ is closed and bounded on the left, with left end point
$t_0$, then the solution is strongly continuous in $H$ at $t_0$ from the right,
i.e., $\bu(t)\rightarrow \bu(t_0)$ in $H$ as $t \rightarrow t_0^+$.
\end{itemize}
\end{definition}

The set of allowed times $t'$ in \eqref{energy-ineq} can be characterized as 
the points of strong continuity of $\bu$, in $H$, from the right. In particular,
condition (v) implies that $t'=t_0$ is allowed in that case.

Suppose that $\f\in L^\infty(I,H)$ and let $\bu$ be a Leray-Hopf weak solution.
It is known that (see  e.g. \cite[Appendix II.B.1]{fmrt2001a}) 
\begin{eqnarray}\label{ener-est}
|\bu(t)|^2 \leq |\bu(t')|^2e^{-\lambda_1\nu
(t-t')}+\frac{1}{\lambda_1^2\nu^2}\|\f\|^2_{L^{\infty}(t',t;
H)}(1-e^{-\lambda_1\nu(t-t')}),
\end{eqnarray}
for all $t'\in I$ allowed in \eqref{energy-ineq} and for all $t\in I$ with $t>
t'$. 

Furthermore, for all $t'\in I$ allowed in \eqref{energy-ineq} and for all $t\in
I$ with $t> t'$,
\begin{equation}\label{nsest2}
\left(\int_{t'}^{t}\|\bu(s)\|^2ds\right)^{1/2}\leq
\frac{1}{\nu^{1/2}}|\bu(t')|+\lambda_1^{1/4}\nu M_1(t-t')^{1/2},
\end{equation}
and
\begin{equation}\label{nsest3}
\left(\int_{t'}^{t}\|\partial _t\bu(s)\|_{D(A)'}^2ds\right)^{1/2}\leq
\frac{c_1}{\lambda_1^{1/4}\nu^{1/2}}|\bu(t')|^2+\frac{\nu^{3/2}}{\lambda_1^{3/4}
}
M_1+ \nu^{5/2}\lambda_1^{1/4}M_1(t-t'),
\end{equation}
where $c_1$ is a universal constant and $M_1$ is a non-dimensional constant
which depends only on non-dimensional combinations of the parameters $\nu$,
$\lambda_1$ and $\|\f\|_{L^\infty(I,H)}$.

Define
\begin{eqnarray}\label{r0}
R_0:=\frac{1}{\lambda_1\nu}\|\f\|_{L^\infty(I,H)}
\end{eqnarray}
and observe that if $|\bu(t')|\leq R$, for some $R\geq R_0$ and some $t'\in I$
allowed in (\ref{energy-ineq}), then it follows from (\ref{ener-est})
that $|\bu(t)|\leq  R$ for all $t\geq t'$.

\subsection{The Navier-Stokes-$\alpha$ model}\label{sec1.4}

Most of the definitions, properties and results described below were
taken from \cite{VTC}. 

We consider the 3D Navier-Stokes-$\alpha$ model, in the periodic domain
$\Omega$:
\begin{eqnarray}\label{ans}\left\{\begin{array}{l}
\bv_t-\nu\Delta \bv +\bu\times (\nabla \times \bv)+\nabla p=\f,\\
\bv=\bu-\alpha^2\Delta\bu,\\
\nabla \cdot \bu=0,\end{array}\right.
\end{eqnarray}
where $\bu$ is the unknown (filtered) velocity field, $\bv$ is an auxiliary
variable, $p$ is the pressure, $\f$ is the external force, and $\alpha>0$ is a
constant.

A functional formulation of \eqref{ans} can be written as
\begin{equation}\label{NSalpha}
\frac{d}{dt}(\bu+\alpha^2A\bu)+\nu A(\bu+\alpha^2A\bu)+\tilde
B(\bu,\bu+\alpha^2A\bu)=\f,
\end{equation}
where $A$ is the Stokes operator, and $\tilde B(\bu,\bv):=\mathbb P[\bu\times
(\nabla \times \bv)]$ is defined for $\bu,\bv\in V$, with values in $V'$, and
which can be extended continuously to an operator from $(\bu,\bv)\in V\times H$
with values in $D(A)'$ (see \cite{FHT}).

The notion of solution  for the Navier-Stokes-$\alpha$
model that we consider is stated in the
next definition:

\medskip
\begin{definition}\label{def1}Let $\f\in L_{loc}^2(I,H)$. A
function $\bu$
is a solution of (\ref{ans}) on $I$ if
\begin{itemize}
 \item[(i)] $\bu\in L^{\infty}_{loc}(I;V)\cap L^2_{loc}(I;D(A))$;
 \item[(ii)] $\partial_t \bu\in L^2_{loc}(I;H)$;
 \item[(iii)] $\bu \in \mathcal C_{loc}(I;V)$;
 \item[(iv)] $\bu$ satisfies 
\[\dfrac{d}{dt}(\bu+\alpha ^2A\bu)+\nu A(\bu+\alpha ^2A\bu)+\tilde
B(\bu,\bu+\alpha ^2A\bu)=\f\]

 in $D(A)'$, in the sense of distributions
on $I$;
 \item[(v)] $\bu$ satisfies the energy equality in the sense that for all
$t',t\in I$ with $t>t'$,
\begin{equation}
\begin{split}
\dfrac{1}{2}(|\bu(t)|^2+\alpha^2\|\bu(t)\|^2)+&
\nu\int_{t'}^t(\|\bu(s)\|^2+\alpha^2|A\bu(s)|^2)ds \\&= \dfrac{1}{2}
(|\bu(t')|^2+\alpha^2\|\bu(t')\|^2)+\int_{t'}^t(\f(s),\bu(s))ds.
\end{split}
\end{equation}
\end{itemize}
\end{definition}
Observe that conditions $(ii)$, $(iii)$ and $(v)$ are consequences of $(i)$ and
$(iv)$.

The existence and uniqueness theorem of solution to the Navier-Stokes-$\alpha$
model was proved in \cite{FHT}: 

\medskip
\begin{theorem}Let $f\in H$ and $\bu_0\in V$. Then, for each $T>0$, there exists
a unique solution of (\ref{ans}) on $[0,T]$ in the sense of Definition
\ref{def1} with initial data $\bu_0$.
\end{theorem}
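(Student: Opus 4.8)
\emph{Existence via Galerkin.} The plan is to obtain existence by a Galerkin approximation controlled by the a priori bounds coming from the energy equality, and to obtain uniqueness by a Gronwall argument on the difference of two solutions; this is the scheme carried out in \cite{FHT}, and I outline its main steps. Let $P_m$ be the Galerkin projector of Section \ref{sec1.1} and seek $\bu_m(t)\in P_mH$ solving, in $P_mH$,
\[
\frac{d}{dt}(\bu_m+\alpha^2A\bu_m)+\nu A(\bu_m+\alpha^2A\bu_m)+P_m\tilde B(\bu_m,\bu_m+\alpha^2A\bu_m)=P_m\f,\qquad \bu_m(0)=P_m\bu_0.
\]
On the finite-dimensional space $P_mH$ the operator $A$ is bounded and invertible and $\tilde B$ is bilinear and continuous, so this is an ODE system with locally Lipschitz right-hand side; it has a unique maximal solution, which the estimates below extend to all of $[0,T]$.

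\emph{A priori estimates.} Pairing the Galerkin equation with $\bu_m$ in $H$ and using $(\tilde B(\bu,\bv),\bu)=0$, the nonlinear term drops and one gets the energy identity
\[
\tfrac12\tfrac{d}{dt}\big(|\bu_m|^2+\alpha^2\|\bu_m\|^2\big)+\nu\big(\|\bu_m\|^2+\alpha^2|A\bu_m|^2\big)=(\f,\bu_m).
\]
Estimating the right-hand side by Young's and Poincaré's inequalities and integrating bounds $\bu_m$ in $L^\infty(0,T;V)\cap L^2(0,T;D(A))$ by a constant depending on $\bu_0$, $\f$, $\nu$, $\lambda_1$, $\alpha$ and $T$ but not on $m$. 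Then, writing $\partial_t\bu_m=(I+\alpha^2A)^{-1}(P_m\f-\nu A(\bu_m+\alpha^2A\bu_m)-P_m\tilde B(\bu_m,\bu_m+\alpha^2A\bu_m))$, using that $(I+\alpha^2A)^{-1}$ commutes with $A$ and maps $H$ into $D(A)$ and $D(A)'$ into $H$ boundedly, that $\tilde B$ is continuous from $V\times H$ into $D(A)'$, and that $\bu_m+\alpha^2A\bu_m$ is bounded in $L^2(0,T;H)$, one obtains that $\partial_t\bu_m$ is bounded in $L^2(0,T;H)$, uniformly in $m$.

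\emph{Passage to the limit.} Weak-$*$ and weak compactness yield a subsequence and a limit $\bu$ with $\bu\in L^\infty(0,T;V)\cap L^2(0,T;D(A))$, $\partial_t\bu\in L^2(0,T;H)$, $\bu_m\wsconv\bu$ in $L^\infty(0,T;V)$, $\bu_m\rightharpoonup\bu$ in $L^2(0,T;D(A))$, $\partial_t\bu_m\rightharpoonup\partial_t\bu$ in $L^2(0,T;H)$. Since $D(A)\hookrightarrow V$ compactly, the Aubin--Lions lemma gives $\bu_m\to\bu$ strongly in $L^2(0,T;V)$. Testing the Galerkin equation against $P_m\phi$ with $\phi\in D(A)$ and a time cutoff, the linear terms pass to the limit by the weak convergences once the spatial derivatives are moved onto the smooth test function; for the nonlinear term one similarly moves the curl in $\tilde B(\bu_m,\bu_m+\alpha^2A\bu_m)$ onto $\phi$, reducing it to the pairing of $\bu_m+\alpha^2A\bu_m$ (weakly convergent in $L^2(0,T;H)$) with an expression involving first derivatives of $\bu_m$ and of $\phi$, which converges strongly in $L^2(0,T;H)$ by the strong $L^2(0,T;V)$ convergence of $\bu_m$; the product of weak and strong convergence passes to the limit. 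Hence $\bu$ satisfies (iv). From $\bu\in L^2(0,T;D(A))$, $\partial_t\bu\in L^2(0,T;H)$ and $V=[D(A),H]_{1/2}$, the Lions--Magenes lemma (see \cite{temam1995}) gives $\bu\in\mathcal C([0,T];V)$, so (i)--(iii) hold; $\bu(0)=\bu_0$ follows from $P_m\bu_0\to\bu_0$ in $V$ and $\bu_m(0)\rightharpoonup\bu(0)$. Finally, the same regularity makes $t\mapsto|\bu(t)|^2+\alpha^2\|\bu(t)\|^2$ absolutely continuous with a.e. derivative $2(\partial_t(\bu+\alpha^2A\bu),\bu)$, so pairing (iv) with $\bu$ and using $(\tilde B(\bu,\bu+\alpha^2A\bu),\bu)=0$ gives the energy equality (v).

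\emph{Uniqueness and the main obstacle.} If $\bu_1,\bu_2$ solve the problem with the same data, set $\bw=\bu_1-\bu_2$; then $\bw(0)=0$ and $\frac{d}{dt}(\bw+\alpha^2A\bw)+\nu A(\bw+\alpha^2A\bw)=-(\tilde B(\bu_1,\bu_1+\alpha^2A\bu_1)-\tilde B(\bu_2,\bu_2+\alpha^2A\bu_2))$. Pairing with $\bw$ in $H$, cancelling one trilinear term via $(\tilde B(\bv,\bu),\bv)=0$, and bounding the rest with the bilinear estimates for $\tilde B$ from \cite{FHT} so that the top-order terms $\nu(\|\bw\|^2+\alpha^2|A\bw|^2)$ are absorbed on the left, one reaches $\frac{d}{dt}(|\bw|^2+\alpha^2\|\bw\|^2)\le g(t)(|\bw|^2+\alpha^2\|\bw\|^2)$ with $g\in L^1(0,T)$ built from norms of $\bu_1,\bu_2$ that are finite by Definition \ref{def1}; Gronwall's inequality and $\bw(0)=0$ give $\bw\equiv0$. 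I expect the main obstacle to be exactly this handling of $\tilde B$: choosing the reformulation that lets the available strong/weak convergences pass to the limit in the Galerkin step, and producing bilinear estimates sharp enough that the highest-order contributions are absorbed by the viscous term while the remainder stays integrable in time. The other steps are routine.
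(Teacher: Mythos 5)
Your outline is correct and follows essentially the same route as the paper, which offers no proof of its own for this statement but attributes it to \cite{FHT}: the argument there is exactly your scheme of a Galerkin approximation closed by the energy identity (using $(\tilde B(\bu,\bv),\bu)=0$), uniform bounds in $L^\infty(0,T;V)\cap L^2(0,T;D(A))$ with $\partial_t\bu_m$ in $L^2(0,T;H)$, Aubin--Lions compactness to pass to the limit in the nonlinearity, and uniqueness by absorbing the top-order terms of the trilinear remainder into $\nu(\|\bw\|^2+\alpha^2|A\bw|^2)$ followed by Gronwall. The only caveat is that you defer the sharp bilinear estimates for $\tilde B$ to \cite{FHT}, but since the paper itself simply quotes the theorem from that reference, this is consistent with its treatment.
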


It is not hard to see that, in fact, one can assume $\f\in
L^\infty(I,H)$
and still have the same conclusion as in the previous theorem.

Another important property, which was observed and used in \cite{VTC} is the
following: 
If $\bu$ is a solution of the Navier-Stokes-$\alpha$ model on $I$, then $\bw$,
defined by $\bw=(1+\alpha^2 A)^{1/2}\bu$, satisfies the
 functional equation
\begin{equation}\label{ansnew}
\bw_t+\nu A\bw+(1+\alpha^2 A)^{-1/2}\tilde B((1+\alpha^2
A)^{-1/2}\bw,(1+\alpha^2 A)^{1/2}\bw)=(1+\alpha^2 A)^{-1/2}\f,
\end{equation}
and vice-versa. 

Therefore, based on the definition of solution to the equation (\ref{ans}), 
a notion of solution for the equation (\ref{ansnew}) can be defined as follows:

\medskip
\begin{definition}\label{def2}Let  $\f\in L_{loc}^2(I;H)$. A function $\bw$
is a solution of (\ref{ansnew}) on $I$ if:
\begin{itemize}
 \item[(i)] $\bw\in L^{\infty}_{loc}(I;H)\cap L^2_{loc}(I;V)$;
 \item[(ii)] $\partial_t \bw\in L^2_{loc}(I;D(A)')$;
 \item[(iii)] $\bw \in \mathcal C_{loc}(I;H)$;
 \item[(iv)] $\bw$ satisfies 
\[\bw_t+\nu A\bw+(1+\alpha^2 A)^{-1/2}\tilde B((1+\alpha^2
A)^{-1/2}\bw,(1+\alpha^2 A)^{1/2}\bw)= (1+\alpha^2 A)^{-1/2}\f,\]
in $D(A)'$, in the sense of distributions on $I$;
\item[(v)]$\bw$ satisfies the energy equality in the sense that for all
$t',t\in I$ with $t>t'$,
\begin{equation}\label{ener-eq}
\dfrac{1}{2}|\bw(t)|^2+
\nu\int_{t'}^t\|\bw(s)\|^2ds=\dfrac{1}{2}
|\bw(t')|^2+\int_{t'}^t((1+\alpha^2 A)^{-1/2}\f(s),\bw(s))ds.
\end{equation}
\end{itemize}
\end{definition}

Again, here we have that conditions $(ii)$, $(iii)$ and $(v)$ are consequences
of $(i)$ and $(iv)$.

Suppose that $\f\in L^{\infty}(I,H)$. We observe that if $\bw$ is a solution of
the Navier-Stokes-$\alpha$ model on an interval $I$, in the sense of Definition
\ref{def2}, then $|\bw(\cdot)|^2$ is an absolutely continuous function on $I$
(see \cite[Corollary 2.1]{VTC}). As a consequence, one can show that, for any
$\psi: [0,\infty)\rightarrow
\mathbb R$, such that $\psi\in \mC^1([0,\infty))$, $\psi\geq 0$, $\sup_{r\geq 0}
\psi'(r) <\infty$, $\bw$ satisfies the following estimate, 
\begin{equation}\label{streng-ener}
\psi(|\bw(t)|^2)\leq 
\psi(|\bw(t')|^2)+\frac{1}{\lambda_1\nu}\|\f\|_{L^\infty(I,
H)}\sup_{r\geq 0}\psi'(r)(t-t'),
\end{equation}
for all $t,t'\in I$, with $t>t'$.

We also have the following \textit{a~priori}
estimates for any solution $\bw$ of the Navier-Stokes-$\alpha$ model in the
sense of
Definition \ref{def2} (adapted from \cite[Corollary 3.2]{VTC}):

\begin{eqnarray}\label{est1}
|\bw(t)|^2\leq |\bw(t')|^2
e^{-\nu\lambda_1(t-t')}+\frac{1}{\lambda_1^2\nu^2}\|\f\|^2_{L^\infty(t',t;H)}
(1-e^{
-\nu\lambda_1(t-t')}),
\end{eqnarray}
\begin{eqnarray}\label{est2}
\left(\int_{t'}^{t}\|\bw(s)\|^2ds\right)^{1/2}\leq 
\frac{1}{\nu^{1/2}}|\bw(t')|+\lambda_1^{1/4}\nu M_2(t-t')^{1/2},
\end{eqnarray}
\begin{equation}\label{est3}
\left(\int_{t'}^{t}\|\partial _t\bw(s)\|_{D(A)'}^2ds\right)^{1/2}\leq
\frac{c_2}{\lambda_1^{1/4}\nu^{1/2}}|\bw(t')|^2+\frac{\nu^{3/2}}{\lambda_1^{3/4}
}
M_2+ \nu^{5/2}\lambda_1^{1/4}M_2(t-t'),
\end{equation}
for all $t', t\in I$, with $t>t'$, where $c_2$ is a universal constant and $M_2$
is a non-dimensional constant which depends only
on non-dimensional combinations of the parameters $\nu$, $\lambda_1$ and
$\|\f\|_{L^\infty(I,H)}$.

Again, we observe that if $|\bw(t')|\leq R$, for some $R\geq R_0$, where $R_0$
is given by (\ref{r0}), and for some $t'\in I$, then it follows from
(\ref{est1}) that $|\bw(t)|\leq R$ for all $t\geq t'$.

In \cite[Theorem 3.1]{VTC}, Vishik, Titi and Chepyzhov proved the convergence of
solutions
of the Navier-Stokes-$\alpha$ model to solutions of the Navier-Stokes
equations. This result was proved in the case where $f\in H$ and $I=[0,
\infty)$, which was of interest to them. However, it is not hard to see that
the proof can be adapted to the
case when $I$ is an arbitrary interval and $f\in L^\infty(I,H)$. Since this
result is going to play an important
role in this article and for the reader's convenience we state it below:

\medskip
\begin{theorem}\label{v-t-c}
Let $I\subset \mathbb{R}$ be an arbitrary interval and $f\in L^\infty(I,H)$. Let
$\{\bw_n\}$ be a bounded sequence in $\mathcal F^b_I$ such that each $\bw_n$
is a solution of the Navier-Stokes-$\alpha_n$ model on $I$, with
$\alpha_n\rightarrow 0$ as $n \rightarrow \infty$, and $\bw_n\rightarrow \bw$ in
$\tau$ as $n\rightarrow \infty$, for some $\bw\in\mathcal F^b_I$. Then $\bw$ is
a Leray-Hopf weak solution of the 3D
Navier-Stokes equations on $\mathring{I}$.
\end{theorem}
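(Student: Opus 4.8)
The goal is to pass to the limit in the weak formulation of the Navier-Stokes-$\alpha_n$ model and show that the limit $\bw$ solves the Navier-Stokes equations in the Leray-Hopf sense. The starting point is that $\{\bw_n\}$ is bounded in $\mathcal F^b_I$ and converges to $\bw$ in the $\tau$ topology; by definition of $\tau$, this gives, on every compact interval $J\subset I$, the convergences $\bw_n\stackrel{*}{\rightharpoonup}\bw$ in $L^\infty(J;H)$, $\bw_n\rightharpoonup\bw$ in $L^2(J;V)$, and $\partial_t\bw_n\rightharpoonup\partial_t\bw$ in $L^2(J;D(A)')$. Standard Aubin-Lions compactness (applied on each compact $J$, using the embeddings $V\hookrightarrow\hookrightarrow H\hookrightarrow D(A)'$) upgrades this to strong convergence $\bw_n\to\bw$ in $L^2(J;H)$, and after passing to a further subsequence, to $\bw_n(t)\to\bw(t)$ in $H$ for a.e.\ $t\in I$. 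These are the ingredients needed to handle every term.

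\textbf{Key steps.} First I would write the $\alpha_n$-equation \eqref{ansnew} tested against a fixed $\bv\in D(A)$ with a smooth compactly supported time weight $\phi$ on $\mathring I$, and examine each term. The linear terms $\frac{d}{dt}(\bw_n,\bv)$ and $\nu(A\bw_n,\bv)=\nu((\bw_n,\bv))$ pass to the limit immediately from the weak convergences. For the forcing, $(1+\alpha_n^2A)^{-1/2}\f\to\f$ in $L^2(J;H)$ since $(1+\alpha_n^2A)^{-1/2}\to I$ strongly and is a contraction; more simply, test against $\bv$ and note $(1+\alpha_n^2A)^{-1/2}\bv\to\bv$ in $H$. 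The crux is the nonlinear term $(1+\alpha_n^2A)^{-1/2}\tilde B((1+\alpha_n^2A)^{-1/2}\bw_n,(1+\alpha_n^2A)^{1/2}\bw_n)$. Writing $\bu_n=(1+\alpha_n^2A)^{-1/2}\bw_n$ and $\bv_n=(1+\alpha_n^2A)^{1/2}\bw_n$, one has $\tilde B(\bu_n,\bv_n)=\mathbb P[\bu_n\times(\nabla\times\bv_n)]$, which after integration by parts against a test function reduces (modulo gradient terms killed by $\mathbb P$) to the familiar trilinear form; here I would follow \cite{VTC} closely. The boundedness of $\bw_n$ in $\mathcal F^b_I$ controls $\alpha_n\|\bw_n\|$ and $\alpha_n^2|A\bw_n|$-type quantities so that $\bu_n-\bw_n\to0$ and $\bv_n-\bw_n\to0$ in the appropriate norms (e.g.\ $\|(1+\alpha_n^2A)^{\pm1/2}\bw_n-\bw_n\|_{L^2(J;H)}\le\alpha_n\|\bw_n\|_{L^2(J;V)}\to0$), so that in the limit both $\bu_n$ and $\bv_n$ may be replaced by $\bw$. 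Combining the strong $L^2(J;H)$ convergence of $\bw_n$ with the weak $L^2(J;V)$ convergence then yields $\tilde B(\bu_n,\bv_n)\rightharpoonup B(\bw,\bw)$ in, say, $L^1(J;D(A)')$ (or tested against the fixed $\phi\bv$), identifying the limit equation as $\bw_t+\nu A\bw+B(\bw,\bw)=\f$ in the sense of distributions on $\mathring I$.

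\textbf{Verifying the remaining Leray-Hopf conditions.} Regularity (i)--(ii) of Definition \ref{NSweak} for $\bw$ on $\mathring I$ follows from the $\mathcal F^b_I$ bound together with lower semicontinuity of norms under weak convergence ($\bw\in L^\infty_{loc}\cap L^2_{loc}(V)$), while $\partial_t\bw\in L^{4/3}_{loc}(\mathring I;V')$ comes from the standard estimate on $B(\bw,\bw)$ once $\bw\in L^\infty_{loc}(H)\cap L^2_{loc}(V)$ is known; weak continuity into $H_w$ is then automatic. For the energy inequality (iv), I would start from the energy \emph{equality} \eqref{ener-eq} for $\bw_n$, rewrite its left-hand side as $\frac12|\bw_n(t)|^2+\nu\int_{t'}^t\|\bw_n(s)\|^2\,ds$, and pass to the limit: the $\int\|\bw_n\|^2$ term is handled by weak lower semicontinuity in $L^2(J;V)$, the $|\bw_n(t)|^2$ term by a.e.\ pointwise convergence (for a.e.\ fixed $t$, along the subsequence), and the right-hand side forcing term $\int_{t'}^t((1+\alpha_n^2A)^{-1/2}\f,\bw_n)\,ds\to\int_{t'}^t(\f,\bw)\,ds$ by the strong convergence already established; the inequality (rather than equality) in the limit arises precisely because lower semicontinuity is only one-sided. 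The allowed times $t'$ are the points of strong right-continuity, which is why condition (v) is stated only when $I$ is left-closed — on the open interval $\mathring I$ no endpoint condition is needed. The main obstacle is the nonlinear term: keeping careful track of the two different powers $(1+\alpha_n^2A)^{\pm1/2}$ inside $\tilde B$ and showing their corrections vanish in the limit using only the $\mathcal F^b_I$ bound is the delicate point, and this is exactly where the argument leans on \cite{VTC}.
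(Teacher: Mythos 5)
This theorem is not proved in the paper at all: it is quoted from Vishik--Titi--Chepyzhov \cite{VTC} (their Theorem 3.1), with only the remark that the extension to an arbitrary interval $I$ and $\f\in L^\infty(I,H)$ is routine, so there is no internal proof to compare against. Your sketch correctly reconstructs the compactness argument behind that cited result --- Aubin--Lions on compact subintervals of $I$, the spectral bound $|(1+\alpha_n^2A)^{\pm1/2}\bw_n-\bw_n|\le\alpha_n\|\bw_n\|$ to remove the smoothing operators, integration by parts so that no derivatives fall on the second argument of $\tilde B$ when identifying the limit $B(\bw,\bw)$, and weak lower semicontinuity plus a.e.\ pointwise convergence (extended to all $t$ by the weak continuity of $\bw$) for the energy inequality, with condition (v) vacuous on $\mathring I$ --- so it is consistent with, and essentially the same as, the proof the paper relies on.
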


\subsection{Trajectory spaces}\label{sec1.5}

The trajectory spaces made of solutions of the Navier-Stokes equations and of the Navier-Stokes-$\alpha$ model play an important role in the definition of the
Vishik-Fursikov measure (see Definitions \ref{vfmdef} and \ref{vfalphamdef})
since they connect this notion of solution with the corresponding equations.

Let $I\subset \mathbb{R}$ be an arbitrary interval and $R>0$. 
Consider the spaces $\mathcal C_{loc}(I, H_w)$ and $\mathcal C_{loc}(I,
B_H(R)_w)$ endowed with the topology of uniform weak convergence on
compact intervals in $I$. Then,
$\mathcal C_{loc}(I, H_w)$ is a separable Hausdorff locally convex
topological vector space, hence completely regular, and $\mathcal C_{loc}(I,
B_H(R)_w)$ is a Polish space,
that
is, a separable and completely metrizable space. 

For any interval $J\subset I$, we introduce the restriction operator 
\begin{eqnarray*}
\Pi_J: \mathcal C_{loc}(I, H_w)&\rightarrow &\mathcal C_{loc}(J, H_w) \\
\bu&\mapsto & (\Pi_J\bu)(t)=\bu(t), \; \forall t\in J.
\end{eqnarray*}
It is clear that the restriction operator is continuous. Furthermore, if $J$
is a closed subinterval of $I$, then $\Pi_J$ is also surjective and open.

For each interval $I$ in $\mathbb R$ and each $t\in I$ we define the projection
operator $\Pi_t$ by
\begin{eqnarray*}
\Pi_t: \mathcal C_{loc}(I, H_w)&\rightarrow & H_w \\
\bu&\mapsto & \Pi_t\bu=\bu(t).
\end{eqnarray*}
which is also  continuous, surjective and open.

For the Navier-Stokes equations, we define the following trajectory spaces
based on the Leray-Hopf weak solutions given by Definition \ref{NSweak}:
\begin{equation}\label{u}
\mathcal U_I=\{\bu\in \mathcal C_{loc}(I;H_w): \bu \mbox{ is a Leray-Hopf
weak solution on } I\},
\end{equation}
\begin{equation}\label{u-sharp}
 \mathcal U^{\sharp}_I=\{\bu\in \mathcal C_{loc}(I;H_w): \bu \mbox{ is a
Leray-Hopf weak
solution on } \mathring{I}\},
\end{equation}
where $\mathring I$ represents  the interior of $I$. We endow these spaces with
the 
topology inherited from $\mathcal C_{loc}(I,H_w)$.

The relation between the two spaces defined above is that $\mathcal
U_I^{\sharp}$ is the sequential closure of $\mathcal
U_I$ with respect to the topology inherited from $\mathcal C_{loc}(I;H_w)$.
Furthermore it is clear that if $I$ is open then $ \mathcal U_I=
\mathcal U^{\sharp}_I$. The difference appears when $I$ is closed and bounded
on the left, since we do not know, in general, whether the weak solutions in
$\mathcal U^\sharp_I$ are strongly continuous in $H$, from the right, at the
left end point of
the interval, as are those in $\mathcal U_I$.

Sometimes it will be useful to work with the following spaces 
\begin{equation}\label{ur}
\mathcal U_I(R)=\{\bu\in \mathcal C_{loc}(I;B_H(R)_w): \bu \mbox{ is a
Leray-Hopf weak solution on } I\},
\end{equation}
\begin{equation}\label{u-sharp-r}
 \mathcal U^{\sharp}_I(R)=\{\bu\in \mathcal C_{loc}(I;B_H(R)_w): \bu \mbox{ is a
Leray-Hopf weak solution on } \mathring{I}\},
\end{equation}
with the topology inherited from $\mathcal C_{loc}(I;H_w)$.

As observed in \cite{FRT}, if $I$ is an interval closed and bounded on the
left, then, for any sequence $\{R_i\}_{i=1}^\infty$ of positive numbers with
$R_i\geq R_0$, for all $i\in\mathbb N$, and $R_i\rightarrow \infty$, we have the
representation
\begin{equation}\label{represUclosed}
\mathcal U_I^{\sharp}=\bigcup_{i=1}^\infty\mathcal
U^\sharp_I(R_i).
\end{equation}
And, if $I$ is an interval open on the left, then, for any sequence
$\{R_i\}_{i=1}^\infty$ of
positive numbers with $R_i\geq R_0$, for all $i\in\mathbb N$, and
$R_i\rightarrow \infty$ and for any
sequence $\{J_n\}_{n=1}^\infty$ of compact intervals in $I$ such that 
$\cup_{n=1}^\infty J_n=I$, we have the representation

\begin{equation}\label{represUopen}
\mathcal
U_I^{\sharp}=\bigcap_{n=1}^\infty\bigcup_{i=1}^\infty\Pi^{-1}_{J_n}\mathcal
U^\sharp_{J_n}(R_i).
\end{equation}

As proved in \cite{FRT}, the spaces $\mU_I$, $\mU_I^\sharp$, $\mU_I(R)$ and $\mU_I^\sharp(R)$ are all Borel subsets of $\mC_{loc}(I,H_w)$ and, in particular, $\mU_I^\sharp(R)$ is closed.

For the Navier-Stokes-$\alpha$ model, we consider the solutions in the sense of
Definition \ref{def2}. Since these solutions belong to
$\mathcal \mathcal C_{loc}(I;H)$, we define the following
trajectory spaces 
\begin{equation}\label{u-alpha}
 \mathcal U^{\alpha}_I=\{\bu\in \mathcal C_{loc}(I;H): \bu \mbox{ is a solution
of the Navier-Stokes-$\alpha$ model on } I\},
\end{equation} 
\begin{equation}\label{u-alpha-r}
\mathcal U^{\alpha}_I(R)=\{\bu\in \mathcal C_{loc}(I;B_H(R)): \bu  \mbox{ is a
solution of the Navier-Stokes-$\alpha$ model on } I\}.
\end{equation}
In order to compare with the solutions of the Navier-Stokes equations and since
$\mathcal C_{loc}(I,H)$ is included in $\mathcal C_{loc}(I,H_w)$, we shall consider the
spaces $\mathcal U^{\alpha}_I$ and $\mathcal U^{\alpha}_I(R)$ with the topology
inherited from $\mathcal C_{loc}(I,H_w)$. 

Here, we also have the same characterizations as the ones for the Navier-Stokes
trajectory space. That is,  if $I$ is an interval closed and bounded on the
left, then, for any sequence $\{R_i\}_{i=1}^\infty$ of positive numbers with
$R_i\geq R_0$, for all $i\in\mathbb N$, and $R_i\rightarrow \infty$, we have the
representation
\begin{equation}\label{represUalphaclosed}
\mathcal U_I^{\alpha}=\bigcup_{i=1}^\infty\mathcal
U^\alpha_I(R_i).
\end{equation}
And, if $I$ is an interval open on the left then, for 
any sequence $\{R_i\}_{i=1}^\infty$ of
positive numbers with $R_i\geq R_0$, for all $i\in\mathbb N$, and
$R_i\rightarrow \infty$ and for any
sequence $\{J_n\}_{n=1}^\infty$ of compact intervals in $I$ such that 
$\cup_{n=1}^\infty J_n=I$, we have that

\begin{equation}\label{represUalphaopen}
\mathcal
U_I^{\alpha}=\bigcap_{n=1}^\infty\bigcup_{i=1}^\infty\Pi^{-1}_{J_n}\mathcal
U^\alpha_{J_n}(R_i).
\end{equation}

The space $\mathcal U^{\alpha}_I(R)$ is closed in the topology inherited from $\mathcal C_{loc}(I,H_w)$ (and this implies that it is also closed in $\mathcal C_{loc}(I,H)$ since $H$ is continuously included in $H_w$). Indeed, since the solutions in $\mathcal U^{\alpha}_I(R)$ are uniformly bounded by $R$ in $H$, it suffices to show that  $\mathcal U^{\alpha}_I(R)$ is closed in $\mathcal C_{loc}(I,B_H(R)_w)$. Since $\mathcal C_{loc}(I,B_H(R)_w)$ is metrizable, it suffices to work with sequences. Then, if $\{\bu_n\}_n$ is a sequence in $\mathcal U^{\alpha}_I(R)$ which converges in $\mathcal C_{loc}(I,B_H(R)_w)$ to an element $\bu$, then the \textit{a~priori} estimates \eqref{est1}, \eqref{est2}, \eqref{est3} yield the compactness of this sequence in $\mathcal C_{loc}(I,H_w)$ and in $\mathcal F_I$. This compactness is sufficient
to show that the limit function $\bu$ is a solution of the Navier-Stokes-$\alpha$ model and, hence, belongs to $\mathcal U^\alpha_I(R)$, proving that this space is closed. Since this space is closed, there is no need to consider spaces analogous to $\mathcal U^\sharp_I(R)$ and $\mathcal U^\sharp_I$, as done for the Navier-Stokes equations.

Due to the representations \eqref{represUalphaclosed} and \eqref{represUalphaopen}, we see that $\mU_I^\alpha$ is an $\mF_\sigma$-set in $\mathcal C_{loc}(I,H_w)$, in the case $I$ is closed and bounded on the left, and it is an $\mF_{\sigma\delta}$-set, in the case $I$ is open on the left. In any case, $\mU_I^\alpha$ is a Borel set.

We now introduce an auxiliary functional space, $\mY_I$, which is directly
connected with
the \emph{a~priori} estimates for the solutions of the Navier-Stokes equations
and of the Navier-Stokes-$\alpha$ model, with suitable compactness property.
First, let $J$
be a compact interval
in $\mathbb R$, then we define 
\begin{multline}\label{y}
\mathcal Y_{J}(R)= \left\{\bu\in \mathcal C_{loc}(J;H_w): 
 |\bu(t)|\leq R, 
   \|\bu\|_{L^2(s,t;V)}\leq
\frac{1}{\nu^{1/2}}R+\lambda_1^{1/4} \nu M (t-s)^{1/2}, \mbox{ and}\right.\\
  \left. \|\partial_t\bu\|_{L^{2}(s,t;D(A)')}\leq
\frac{c}{\lambda_1^{1/4}\nu^{1/2}}R^2+\frac{\nu^{3/2}}{\lambda_1^{3/4}}M 
+ \nu^{5/2}\lambda_1^{1/4}M(t-s), \forall s,t\in J\right\},
\end{multline}
where $c=\max\{c_1,c_2\}$ is a universal constant and $M=\max\{M_1,M_2\}$ is a
non-dimensional constant which depends only on non-dimensional combinations of
the terms $\nu$, $\lambda_1$ and $\|\f\|_{L^\infty(I,H)}$. With these choices of
constants, for any $R\geq R_0$, if $|u_0| \leq R$, then the solutions of the
Navier-Stokes equations and of the Navier-Stokes-$\alpha$ model with initial
condition $u_0$ all satisfy the estimates in \eqref{y}, for subsequent times,
which is possible thanks to the a~priori estimates \eqref{ener-est},
\eqref{nsest2} and
\eqref{nsest3}, and \eqref{est1}, \eqref{est2}, and \eqref{est3}. Thus, we have
\begin{equation}
  \label{ujinyj}
  \mU_J(R), \;\mU_J^\alpha(R) \subset \mY_J(R), \quad \forall  R\geq R_0.
\end{equation}

Now, let $I$ be any interval in $\mathbb R$ and $\{J_n\}_{n=1}^\infty$ be a
sequence of compact intervals in $I$ such that $J_n\subset J_{n+1}$ and
$\cup_{n=1}^\infty J_n=I$. Consider also a sequence  $\{R_i\}_{i=1}^\infty$ of
increasing  real number such that $R_1\geq R_0$ and $R_i\rightarrow
\infty$ as $i\rightarrow \infty$. Define 
\begin{equation}\label{represY}
\mathcal Y_I:=\bigcap_{n=1}^\infty \bigcup_{i=1}^\infty
\Pi^{-1}_{J_n}
\mathcal Y_{J_n}(R_i).
\end{equation}
Also, for a given $R\geq R_0$, we define
\begin{eqnarray}\label{yI}
\mathcal Y_{I}(R)=\bigcap_{n=1}^\infty 
\Pi^{-1}_{J_n}
\mathcal Y_{J_n}(R).
\end{eqnarray}
Observe that $\bigcup_{i=1}^\infty\mathcal Y_{I}(R_i)\subset \mathcal
Y_I\subset \mathcal C_{loc}(I,H_w)$. The space $\mY_I(R)$ is independent of the
choice of the intervals $\{J_n\}_n$, while the space $\mY_I$ is independent of
the choice of both the intervals $\{J_n\}_n$ and the sequence $\{R_i\}_i$,
although these properties are not really necessary.

\medskip
\begin{lemma}\label{ycompact}
Let $J$ be a compact interval in $\mathbb R$. Then,  $\mathcal Y_J(R)$ is a
compact subset of $\mathcal C_{loc}(J,H_w)$.
\end{lemma}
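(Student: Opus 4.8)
The plan is to prove compactness of $\mathcal Y_J(R)$ in $\mathcal C_{loc}(J,H_w)$ by exhibiting it as a closed subset of a compact set. Since $J$ is compact, $\mathcal C_{loc}(J,H_w) = \mathcal C(J,H_w)$, and since every $\bu \in \mathcal Y_J(R)$ takes values in $B_H(R)$, it is enough to work inside $\mathcal C(J, B_H(R)_w)$. The space $B_H(R)_w$ (the ball of radius $R$ in $H$ with the weak topology) is compact and metrizable, so $\mathcal C(J,B_H(R)_w)$ is a metric space and I may argue entirely with sequences. I would first establish \emph{relative compactness}: given a sequence $\{\bu_n\} \subset \mathcal Y_J(R)$, the uniform bound $|\bu_n(t)| \le R$ together with the uniform bound on $\|\bu_n\|_{L^2(J;V)}$ and on $\|\partial_t \bu_n\|_{L^2(J;D(A)')}$ built into the definition \eqref{y} puts $\{\bu_n\}$ in a bounded subset of $\mathcal F^b_J$. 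An Aubin--Lions--Simon type argument (or, more elementarily, equicontinuity of $t \mapsto (\bu_n(t),\bw_i)$ for each fixed basis element $\bw_i$, which follows from the $D(A)'$-bound on $\partial_t\bu_n$, combined with the uniform $H$-bound and an Arzel\`a--Ascoli / diagonal extraction over the countable family $\{\bw_i\}$) yields a subsequence converging in $\mathcal C(J,B_H(R)_w)$, i.e. uniformly on $J$ in the weak topology, to some limit $\bu$.

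The second part is to check that $\mathcal Y_J(R)$ is \emph{closed}, i.e.\ that the limit $\bu$ obtained above again lies in $\mathcal Y_J(R)$. The pointwise bound $|\bu(t)| \le R$ is immediate, since each closed ball $B_H(R)$ is weakly closed and $\bu_n(t) \rightharpoonup \bu(t)$ for every $t$. For the two integral estimates one uses weak lower semicontinuity of norms: from uniform convergence in $\mathcal C(J,B_H(R)_w)$ and the uniform $\mathcal F^b_J$-bound one extracts further weak convergence $\bu_n \rightharpoonup \bu$ in $L^2(s,t;V)$ and $\partial_t\bu_n \rightharpoonup \partial_t\bu$ in $L^2(s,t;D(A)')$ on each subinterval $[s,t] \subset J$ (the weak limits are identified with $\bu$ and $\partial_t\bu$ because they already converge in the weaker topology of $\mathcal C(J,H_w)$ / distributions); then
\[
\|\bu\|_{L^2(s,t;V)} \le \liminf_n \|\bu_n\|_{L^2(s,t;V)} \le \frac{1}{\nu^{1/2}}R + \lambda_1^{1/4}\nu M (t-s)^{1/2},
\]
and similarly for the $\partial_t$-term, so all the defining inequalities of \eqref{y} pass to the limit. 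Hence $\bu \in \mathcal Y_J(R)$.

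Combining the two parts: $\mathcal Y_J(R)$ is a closed subset of the compact metric space $\mathcal C(J,B_H(R)_w)$, and therefore compact; since the inclusion $\mathcal C(J,B_H(R)_w) \hookrightarrow \mathcal C(J,H_w) = \mathcal C_{loc}(J,H_w)$ is continuous, $\mathcal Y_J(R)$ is also compact as a subset of $\mathcal C_{loc}(J,H_w)$, which is the claim.

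I expect the main obstacle to be the relative-compactness step — specifically, verifying carefully that the $D(A)'$-bound on the time derivative really does give equicontinuity of the scalar functions $t \mapsto (\bu_n(t),\bw_i)$ uniformly in $n$ (one writes $(\bu_n(t),\bw_i) - (\bu_n(s),\bw_i) = \int_s^t \langle \partial_\tau \bu_n(\tau), \bw_i\rangle\,d\tau$ and bounds the right side by $\|\partial_\tau\bu_n\|_{L^2(s,t;D(A)')}\|\bw_i\|_{D(A)}\,|t-s|^{1/2}$ using Cauchy--Schwarz), and then upgrading pointwise-on-a-dense-set convergence to genuine uniform weak convergence on all of $J$, using the uniform $H$-bound $R$ to control the tails. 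The identification of the weak $L^2$-limits with $\bu$ and $\partial_t\bu$, and the passage to the limit in the integral inequalities, are then routine applications of weak lower semicontinuity.
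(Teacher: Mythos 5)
Your proposal is correct and follows essentially the same route as the paper's proof: metrizability via $\mathcal C(J,B_H(R)_w)$, the a priori bounds in \eqref{y} giving (via the same Cauchy--Schwarz estimate on $\int_s^t\langle\partial_\tau\bu_n,\bw_i\rangle\,d\tau$) equicontinuity of $t\mapsto(\bu_n(t),\bw_i)$, Arzel\`a--Ascoli plus a diagonal and density argument with the uniform bound $R$ to upgrade to uniform weak convergence, and then passing the defining estimates to the limit so that $\bu\in\mathcal Y_J(R)$. One cosmetic slip in your closing paragraph: $\mathcal C(J,B_H(R)_w)$ is not itself compact, but this is harmless because your two steps already establish sequential compactness of $\mathcal Y_J(R)$ in that metrizable space, which is exactly what is needed.
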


\begin{proof}
First, observe that since $\mathcal Y_J(R)\subset \mathcal C_{loc}(J, B_H(R)_w)$
then
$\mathcal Y_J(R)$ is metrizable. Now, let $\{\bu_n\}_n$ be a sequence in
$\mathcal Y_J(R)$. It is clear that $\{\bu_n\}$ is bounded in $L^2(J,V)$ and 
$\{\partial_t \bu_n\}$ is bounded in
$L^{2}(J,D(A)')$. Then, by Aubin's Compactness Theorem, we obtain that
$\{\bu_n\}$ is
relatively compact in $L^2(J,H)$. Using all
the information obtained before, we conclude that there exist a vector field
$\bu$ and a subsequence $\{\bu_{n_k}\}$ such that 
\[\begin{array}{ccc}
\bu_{n_k} \stackrel{*}{\rightharpoonup}\bu \; \mbox{ in
}L^{\infty}(J;H);\\
 \bu_{n_k} \rightharpoonup \bu\;\mbox{ in }L^{2}(J;V); \\
 \partial_t\bu_{n_k}\rightharpoonup \partial_t\bu\;\mbox{ in }L^{2}(J;D(A)');\\
 \bu_{n_k}\rightarrow \bu\;\mbox{ in }L^{2}(J;H).
\end{array}\]

Now, consider $\{\bw_i\}_i$ a countable dense subset in $D(A)$ (the existence
of such a set follows from the fact that $D(A)$ is separable). For each
$i\in \mathbb N$, define the sequence $\{f^i_k\}_k$ such that, for each $k\in
\mathbb N$, $f^i_k(t)=(\bu_{n_k}(t),\bw_i)$ for all $t\in J$. Note that
$\{f^i_k\}_k$ is a sequence of continuous functions from $J$ to $\mathbb R$,
which is uniformly bounded and 
equicontinuous. Indeed, since 
\[|f^i_k(t)-f^i_k(s)|=\left|\int_{t}^{s}
(\partial_\tau \bu_{n_k}(\tau),\bw_i)\rd \tau\right|\leq
|t-s|^{1/2}\|\bw_i\|_{D(A)} \|\partial_t \bu_{n_k}\|_{L^{2}(t,s;D(A)')},\]
for all $t,s\in J$, then $\{f^i_k\}_k$ is equicontinuous. Thus, we can apply
Arzel\`a-Ascoli Theorem to obtain that $\{f^i_k\}_k$ is relatively compact in
$\mathcal C_{loc}(J,\mathbb R)$, for each $i\in \mathbb N$. By Cantor's diagonal
argument, we can construct a subsequence $\{f^i_{k_j}\}$ such that $f^i_{k_j}$
converges to $(\bu(\cdot),\bw^i)$ as $j\rightarrow \infty$,  for all $i\in
\mathbb N$. Now we use a triangulation argument to
obtain the convergence of $\bu_{n_{k_j}}\rightarrow \bu$ in $\mathcal
C_{loc}(J,H_w)$.
More precisely, given $\varepsilon>0$ and $\bv \in H$, there exist $\bw\in D(A)$
such that $|\bv-\bw|<\varepsilon/(6R)$, and $i_0\in \mathbb N$ such that
$\|\bw-\bw_{i_0}\|_{D(A)}<\lambda_1\varepsilon/(6R)$. By the
convergence of $\{f^i_{k_j}\}_j$ we conclude that there exists $N\in \mathbb N$
such that $\sup_{t\in
J}|(\bu_{n_{k_j}}(t)-\bu(t),\bw_i)|\leq \varepsilon/3$, for all $j\geq N$ and
for all $i\in \mathbb N$.
Therefore, for all $j\geq N$, we have that
\begin{equation*}
\begin{split}
&\sup_{t\in J}|(\bu_{n_{k_j}}(t)-\bu(t),\bv)|\leq  \sup_{t\in
J}|(\bu_{n_{k_j}}(t)-\bu(t),\bv-\bw)|\\&+\sup_{t\in
J}|(\bu_{n_{k_j}}(t)-\bu(t),\bw-\bw_{i_0})|+\sup_{t\in
J}|(\bu_{n_{k_j}}(t)-\bu(t),\bw_{i_0})|\\ &<\sup_{t\in
J}|\bu_{n_{k_j}}(t)-\bu(t)|(|\bv-\bw|+|\bw-\bw_{i_0}|)+\frac{\varepsilon}{3}\\
&\leq
2R|\bv-\bw|+2R\frac{1}{\lambda_1}\|\bw-\bw_{i_0}\|_{D(A)}+\frac{\varepsilon}
{2}< \varepsilon.
\end{split}
\end{equation*}
Finally, it is clear that $\bu$ inherits the uniform estimates of the
sequence in $\mathcal Y_J(R)$, so that $\bu$ itself is in $\mathcal Y_J(R)$,
completing the proof that $\mathcal Y_J(R)$ is
compact. 
\end{proof}

\medskip
\begin{lemma}
Let $I$ be any interval in $\mathbb R$ and $R>0$. Then, the set $\mathcal
Y_{I}(R)$ is compact in $\mathcal C_{loc}(I,H_w)$. Moreover, $\mathcal Y_I(R)$
is metrizable. 
\end{lemma}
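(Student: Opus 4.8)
The plan is to realize $\mY_I(R)$, up to homeomorphism, as a closed subset of a countable product of the compact sets $\mY_{J_n}(R)$ furnished by Lemma \ref{ycompact}, and then to invoke Tychonoff's theorem together with the standard fact that a continuous bijection from a compact space onto a Hausdorff space is a homeomorphism. Fix a sequence $\{J_n\}_{n=1}^\infty$ of compact intervals with $J_n\subset J_{n+1}$ and $\bigcup_{n=1}^\infty J_n=I$ (chosen, as usual, so that any endpoint of $I$ that belongs to $I$ lies in every $J_n$), so that $\mY_I(R)=\bigcap_{n=1}^\infty \Pi_{J_n}^{-1}(\mY_{J_n}(R))$. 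By Lemma \ref{ycompact}, each $\mY_{J_n}(R)$ is compact in $\mathcal C_{loc}(J_n,H_w)$; since $\mY_{J_n}(R)\subset \mathcal C_{loc}(J_n,B_H(R)_w)$, each $\mY_{J_n}(R)$ is also metrizable.

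I would then consider the countable product $P:=\prod_{n=1}^\infty \mY_{J_n}(R)$, which is compact by Tychonoff's theorem and metrizable as a countable product of metrizable spaces. Restricting a function in $\mY_{J_{n+1}}(R)$ to $J_n$ preserves all the bounds appearing in \eqref{y}, so $\Pi_{J_n}(\mY_{J_{n+1}}(R))\subset \mY_{J_n}(R)$ (here $\Pi_{J_n}$ denotes restriction to $J_n$), and hence the set of compatible families
\[
L:=\{(\bu_n)_n\in P:\ \Pi_{J_n}\bu_{n+1}=\bu_n\ \text{ for all }n\in\mathbb N\}
\]
is well defined. It is closed in $P$, because for each $n$ the condition $\Pi_{J_n}\bu_{n+1}=\bu_n$ cuts out a closed subset of $\mathcal C_{loc}(J_{n+1},H_w)\times \mathcal C_{loc}(J_n,H_w)$ (the restriction operator is continuous and $\mathcal C_{loc}(J_n,H_w)$ is Hausdorff). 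Thus $L$ is compact and metrizable.

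Finally I would define the gluing map $\Psi\colon L\to \mathcal C_{loc}(I,H_w)$ that sends a compatible family $(\bu_n)_n$ to the function $\bu$ on $I$ with $\bu|_{J_n}=\bu_n$ for all $n$; this $\bu$ is well defined and weakly continuous on $I=\bigcup_n J_n$ because the $\bu_n$ are consistent and weakly continuous on each $J_n$, and the choice of the $J_n$ takes care of the endpoints of $I$ lying in $I$. The map $\Psi$ is injective and satisfies $\Psi(L)=\mY_I(R)$ by unravelling the definition of $\mY_I(R)$, and it is continuous because the topology of $\mathcal C_{loc}(I,H_w)$ is that of uniform weak convergence on compact subintervals of $I$ and every compact subinterval of $I$ is contained in some $J_n$. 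Consequently $\mY_I(R)=\Psi(L)$ is a continuous image of the compact set $L$, hence compact; and since $\mathcal C_{loc}(I,H_w)$ is Hausdorff, $\Psi$ is a continuous bijection of the compact space $L$ onto the Hausdorff space $\mY_I(R)$, hence a homeomorphism, so $\mY_I(R)$ is metrizable as well.

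I expect the main (and only genuinely non-formal) point to be the verification that $\Psi$ is a well-defined continuous map into $\mathcal C_{loc}(I,H_w)$: that compatible families glue to weakly continuous functions on $I$, and that uniform weak convergence on each $J_n$ is precisely the topology of $\mathcal C_{loc}(I,H_w)$. Everything else is soft general topology — Tychonoff's theorem, the fact that closed subsets of compact spaces are compact, that a continuous bijection from a compact space to a Hausdorff space is a homeomorphism, and that countable products and subspaces of metrizable spaces are metrizable.
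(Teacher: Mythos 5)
Your argument is correct, but it is not the route the paper takes. The paper disposes of the lemma in a few lines: it quotes (from the end of Section 2.4 of \cite{FRT}) the characterization of compact subsets of $\mathcal C_{loc}(I,H_w)$ as those sets whose restrictions to every compact subinterval $J\subset I$ are equibounded in $H$ and equicontinuous with respect to the uniform structure of $\mathcal C_{loc}(J,H_w)$, notes that $\Pi_J\mathcal Y_I(R)\subset\mathcal Y_J(R)$ with $\mathcal Y_J(R)$ compact by Lemma \ref{ycompact}, and gets metrizability for free from the inclusion $\mathcal Y_I(R)\subset\mathcal C_{loc}(I,B_H(R)_w)$. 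You instead realize $\mathcal Y_I(R)$ as the inverse limit of the compact metrizable spaces $\mathcal Y_{J_n}(R)$: Tychonoff gives compactness of the product, the compatibility conditions $\Pi_{J_n}\bu_{n+1}=\bu_n$ cut out a closed set $L$, and the gluing map is a continuous bijection from the compact $L$ onto $\mathcal Y_I(R)$, hence a homeomorphism; all the steps you flag (restriction preserves the bounds in \eqref{y}, glued functions are weakly continuous because every point of $I$ has a relative neighborhood inside some $J_n$, and compact subsets of $I$ are eventually contained in the $J_n$) do check out. What your approach buys is self-containedness: it does not lean on the quoted Ascoli--Arzel\`a-type characterization, which as stated is really a criterion for \emph{relative} compactness, so the paper's argument implicitly also uses that $\mathcal Y_I(R)$ is closed in $\mathcal C_{loc}(I,H_w)$ --- a point your construction never needs, since compactness of the image comes out directly. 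What the paper's route buys is brevity, and in particular a one-line proof of metrizability via $\mathcal C_{loc}(I,B_H(R)_w)$, which you could also have invoked instead of passing through the metrizability of the countable product.
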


\begin{proof}
As observed in the end of Section 2.4 in \cite{FRT}, compact subsets of
$\mathcal C_{loc}(I,H_w)$ can be characterized as the sets $K$ for which, for
every compact interval $J \subset I$, the subset $\Pi_J K$ is equi-bounded with
respect to the norm of $H$ and  equicontinuous with respect to the uniform
structure of $\mathcal C_{loc}(J, H_w)$. Since $\Pi_J\mathcal Y_I(R)\subset
\mathcal
Y_J(R)$
and $\mathcal Y_J(R)$ is compact as proved in Lemma \ref{ycompact}, these
conditions are met, and we have that $\mathcal Y_I(R)$ is compact. Furthermore,
since $\mathcal Y_I(R)\subset
\mathcal C_{loc}(I,B_H(R)_w)$ then $\mathcal Y_I(R)$ is metrizable.
\end{proof}

\medskip
\begin{lemma}
\label{spaceyi}
Let $I$ be any interval in $\mathbb R$. Then, the space $\mathcal Y_I$, endowed
with the topology inherited from $\mathcal
C_{loc}(I,H_w)$, is a completely regular topological space. Moreover, $\mathcal
Y_I$
contains the spaces $\mathcal U_I^\sharp$ and $\mathcal U_I^{\alpha}$.
\end{lemma}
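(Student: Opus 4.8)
The plan is to establish the two assertions separately, starting with complete regularity. The space $\mathcal C_{loc}(I,H_w)$ is a locally convex topological vector space (the topology being generated by the seminorms $\bu \mapsto \sup_{t\in J}|(\bu(t),\bv)|$ over compact $J\subset I$ and $\bv\in H$), hence it is completely regular, and every subspace of a completely regular space, equipped with the subspace topology, is again completely regular. Since $\mathcal Y_I$ carries precisely the topology inherited from $\mathcal C_{loc}(I,H_w)$, complete regularity of $\mathcal Y_I$ is immediate from this hereditary property. I would state this in one or two sentences, citing the fact (already used implicitly in the excerpt, e.g. in Section \ref{sec1.5}) that $\mathcal C_{loc}(I,H_w)$ is a locally convex Hausdorff topological vector space.

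For the inclusion $\mathcal U_I^\alpha \subset \mathcal Y_I$, I would argue as follows. Fix the defining sequences $\{J_n\}$ of compact intervals exhausting $I$ and $\{R_i\}$ increasing to infinity with $R_1\geq R_0$, as in \eqref{represY}. Let $\bu\in\mathcal U_I^\alpha$; then $\bu$ is a solution of the Navier-Stokes-$\alpha$ model on $I$, so by \eqref{represUalphaclosed}/\eqref{represUalphaopen} (or directly from \eqref{est1} with $t'$ the left endpoint, resp. an interior approximating point), the quantity $|\bu(t)|$ is bounded on each $J_n$; choosing $i=i(n)$ large enough that this bound is at most $R_{i(n)}$ and $R_{i(n)}\geq R_0$, the a~priori estimates \eqref{est1}, \eqref{est2}, \eqref{est3} (which hold for any solution in the sense of Definition \ref{def2}, and recall $\bw=(1+\alpha^2A)^{1/2}\bu$ satisfies $|\bw|\geq|\bu|$ while also controlling the $V$- and $D(A)'$-norms) show that $\Pi_{J_n}\bu\in\mathcal Y_{J_n}(R_{i(n)})$. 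Hence $\bu\in\Pi_{J_n}^{-1}\mathcal Y_{J_n}(R_{i(n)})\subset\bigcup_i\Pi_{J_n}^{-1}\mathcal Y_{J_n}(R_i)$ for every $n$, so $\bu\in\mathcal Y_I$. The inclusion $\mathcal U_I^\sharp\subset\mathcal Y_I$ is proved the same way, using instead \eqref{ener-est}, \eqref{nsest2}, \eqref{nsest3} for Leray-Hopf weak solutions on $\mathring I$, together with \eqref{represUclosed}/\eqref{represUopen}; the only subtlety is at a closed left endpoint, where one uses that elements of $\mathcal U_I^\sharp$ need not be strongly continuous from the right there, but the estimates only require an allowed time $t'\in\mathring I$ which can be taken arbitrarily close to the endpoint, and then passes to the limit using that $|\bu(t)|\leq R$ is preserved.

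The main point requiring a little care — rather than a genuine obstacle — is making sure the index $i(n)$ can be chosen so that a single $\bu$ lands in the intersection over all $n$: since $\{R_i\}$ is increasing and the bound on $|\bu|$ over $J_n$ grows at most in a controlled way (indeed, by \eqref{est1}/\eqref{ener-est} it is eventually bounded by $\max\{|\bu(t')|,R_0\}$ uniformly in $t$), one can in fact take $i(n)$ nondecreasing, and the membership in $\bigcup_i\Pi_{J_n}^{-1}\mathcal Y_{J_n}(R_i)$ holds for each $n$ separately, which is all that \eqref{represY} demands. I would also remark that the essentially identical verification underlies \eqref{ujinyj}, so the reader has seen the computation; the present lemma merely globalizes it from a single compact interval $J$ to the exhausting family $\{J_n\}$. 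No new estimate is needed beyond those already recorded in Sections \ref{sec1.3} and \ref{sec1.4}.
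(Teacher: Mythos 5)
Your proof is correct and follows essentially the same route as the paper: complete regularity is inherited from the completely regular space $\mathcal C_{loc}(I,H_w)$, and the inclusions follow from the \emph{a~priori} estimates behind \eqref{ujinyj} together with the exhaustion by compact intervals $J_n$ and radii $R_i$ defining $\mathcal Y_I$ (the paper packages this via the representations \eqref{represUclosed}--\eqref{represUalphaopen} rather than element by element, but the computation is the same). One small correction: the elements of $\mathcal U_I^\alpha$ are by definition solutions in the sense of Definition \ref{def2}, i.e.\ already the variable $\bw$, so the estimates \eqref{est1}--\eqref{est3} apply to them directly and your parenthetical detour through $\bw=(1+\alpha^2A)^{1/2}\bu$ is unnecessary (though harmless).
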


\begin{proof}
Since $\mathcal Y_I$ is a subspace of $\mathcal C_{loc}(I,H_w)$ then  $\mathcal
Y_I$ is a completely regular topological space. For the inclusions,
suppose first that $I$ is an
interval closed and bounded on the left. Then, for every compact interval
$J\subset I$ containing the left end point of $I$, and for all $R\geq
R_0$, where $R_0$ is defined by (\ref{r0}), it follows from \eqref{ujinyj} that $\mathcal U_I^\sharp(R) \subset \Pi_J^{-1}\mathcal Y_J(R)$.
Therefore, using (\ref{yI}) and (\ref{represUclosed}) we conclude that $\mathcal
U_I^\sharp \subset \mathcal Y_I$.  Now, in order to prove that $\mathcal
U^{\alpha}_I\subset \mathcal Y_I$, notice that from \eqref{ujinyj} we have that $\mathcal
U^{\alpha}_I(R)\subset \Pi_J^{-1}\mathcal Y_I(R)$, for all $R\geq
R_0$ and any compact interval  $J\subset I$  containing the left end point of
$I$. Thus, using (\ref{yI}) and (\ref{represUalphaclosed}) we conclude that
$\mathcal U_I^\alpha \subset \mathcal Y_I$.

Now, suppose that $I$ is open on the left. Again, we have from \eqref{ujinyj} that $\mathcal
U^\sharp_J(R)\subset \mathcal Y_J(R)$, for all $R\geq
R_0$ and any compact interval  $J\subset I$. Thus, by (\ref{represY}) and
(\ref{represUopen}), we conclude that $\mathcal
U^\sharp_I \subset \mathcal Y_I$. Observe also that we have from \eqref{ujinyj} that $\mathcal
U^{\alpha}_J(R)\subset \mathcal Y_J(R)$, for all $R\geq
R_0$ and any compact interval  $J\subset I$. Thus,
(\ref{represY}) and (\ref{represUalphaopen}) imply that $\mathcal
U^{\alpha}_I\subset \mathcal Y_I$.
\end{proof}

\medskip
\begin{remark}
If $I$ is an interval closed and bounded on the left and 
$\{R_i\}_i$ is a sequence of positive numbers with $R_i\geq R_0$, for all
$i\in \mathbb N$, and
$R_i\rightarrow \infty$, then we have in fact showed in the proof of Lemma
\ref{spaceyi} that $\mathcal U^\alpha_I, \mathcal U^\sharp_I\subset
\bigcup_{i=1}^\infty\mathcal
Y_I(R_i)$. On the other hand, if $I$ is an interval open on the left then
$\mathcal
U^\alpha_I, \mathcal U^\sharp_I\subset \mathcal Y_I$ but $\mathcal
U^\alpha_I$ and $\mathcal U^\sharp_I$ might be not included
in $\bigcup_{i=1}^\infty\mathcal
Y_I(R_i)$.
\end{remark}

\medskip
\begin{lemma}
Let $I$ be any interval in $\mathbb R$. Then, the space
$\mathcal Y_I(R)$ contains the spaces $\mathcal U_I^\sharp(R)$ and $\mathcal
U_I^{\alpha}(R)$,  for all $R\geq R_0$, where $R_0$ is defined by (\ref{r0}).
\end{lemma}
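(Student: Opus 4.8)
The plan is to show the two inclusions $\mathcal U_I^\sharp(R)\subset\mathcal Y_I(R)$ and $\mathcal U_I^\alpha(R)\subset\mathcal Y_I(R)$ directly from the definition \eqref{yI} of $\mathcal Y_I(R)$ as $\bigcap_{n=1}^\infty\Pi_{J_n}^{-1}\mathcal Y_{J_n}(R)$, for a fixed exhausting sequence of compact intervals $\{J_n\}$. Since $\mathcal Y_I(R)$ is, by that very definition, the set of all $\bu\in\mathcal C_{loc}(I,H_w)$ whose restriction to every compact interval $J\subset I$ lies in $\mathcal Y_J(R)$ (because every compact $J$ is contained in some $J_n$ and $\Pi_J\mathcal Y_{J_n}(R)\subset\mathcal Y_J(R)$), it suffices to show that every $\bu$ in $\mathcal U_I^\sharp(R)$ or in $\mathcal U_I^\alpha(R)$ satisfies, for each compact $J\subset I$, the three estimates defining $\mathcal Y_J(R)$: the uniform bound $|\bu(t)|\le R$ on $J$, the $L^2(s,t;V)$ bound, and the $L^2(s,t;D(A)')$ bound, for all $s,t\in J$.

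First I would treat $\mathcal U_I^\alpha(R)$. By definition such a $\bu$ is a solution of the Navier-Stokes-$\alpha$ model on $I$ lying in $\mathcal C_{loc}(I,B_H(R))$, so $|\bu(t)|\le R$ for all $t\in I$, in particular on any compact $J$. Passing to $\bw=(1+\alpha^2A)^{1/2}\bu$, which solves \eqref{ansnew}, one has $|\bw(t)|\ge|\bu(t)|$ and in fact the energy machinery of \cite{VTC} gives $|\bw(t)|\le R$ as well once $R\ge R_0$, by the invariance remark following \eqref{est1}; then the a~priori estimates \eqref{est1}, \eqref{est2}, \eqref{est3} applied on the subinterval $[s,t]\subset J$, together with $M_2\le M$ and $c_2\le c$, yield exactly the $L^2(s,t;V)$ and $L^2(s,t;D(A)')$ bounds in \eqref{y} (here one uses that the $V$- and $D(A)'$-norms of $\bu$ are controlled by those of $\bw$). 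This is precisely the content of \eqref{ujinyj}, $\mathcal U_J^\alpha(R)\subset\mathcal Y_J(R)$, so the restriction of $\bu$ to each $J_n$ lies in $\mathcal Y_{J_n}(R)$ and hence $\bu\in\mathcal Y_I(R)$.

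Next I would treat $\mathcal U_I^\sharp(R)$. For $\bu\in\mathcal U_I^\sharp(R)$, $\bu$ is a Leray-Hopf weak solution on $\mathring I$ with $|\bu(t)|\le R$ for all $t\in I$. Fix a compact $J=[a,b]\subset I$ and $s,t\in J$ with $s<t$. The subtlety is that a time $s'$ is ``allowed'' in the energy inequality \eqref{energy-ineq} only at points of right strong continuity, which need not include an arbitrary $s$; however, since \eqref{energy-ineq} holds for a.e.\ $s'$, one can choose allowed times $s_k\downarrow s$ (if $s>\inf I$) or handle the left endpoint via condition (v) of Definition \ref{NSweak} when $s$ is the left end of $I$, and then the a~priori estimates \eqref{ener-est}, \eqref{nsest2}, \eqref{nsest3} give, with $M_1\le M$, $c_1\le c$, the bound $|\bu(t)|\le R$ propagated forward and the $L^2$-in-time estimates on $[s_k,t]$; letting $s_k\downarrow s$ and using weak lower semicontinuity of the $L^2(\cdot;V)$ and $L^2(\cdot;D(A)')$ norms and continuity of $\bu(t)$ in $H_w$ recovers the estimates on $[s,t]$. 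This gives $\mathcal U_J^\sharp(R)\subset\mathcal Y_J(R)$, i.e.\ the second half of \eqref{ujinyj}, and again exhausting by $\{J_n\}$ yields $\bu\in\mathcal Y_I(R)$.

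The main obstacle, then, is the endpoint/allowed-time bookkeeping in the Leray-Hopf case: one must be careful that the a~priori estimates \eqref{ener-est}–\eqref{nsest3}, which are stated for $t'$ allowed in \eqref{energy-ineq}, transfer to an \emph{arbitrary} initial time $s\in J$ with only $|\bu(s)|\le R$ available, and this is handled by the approximation-from-the-right argument together with the weak continuity in $H$ and lower semicontinuity of the time-integral norms; the $\alpha$-model case is easier since the energy \emph{equality} holds for all $t',t$. Once \eqref{ujinyj} is in hand for every compact $J$, the conclusion is immediate from the intersection representation \eqref{yI}.
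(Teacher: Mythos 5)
Your overall strategy is the same as the paper's: verify, via the \emph{a~priori} estimates, that restrictions to compact subintervals satisfy the bounds defining $\mathcal Y_J(R)$, and then invoke the intersection representation \eqref{yI}. Your treatment of the Leray--Hopf case is fine and in fact more careful than the paper's one-line ``it is clear'': the approximation by allowed times $s_k\downarrow s$ together with monotone convergence of the time integrals (you invoke lower semicontinuity, which also works) does transfer \eqref{ener-est}, \eqref{nsest2}, \eqref{nsest3} to arbitrary $s\in J$, and the pointwise bound $|\bu(t)|\le R$ is anyway built into membership in $\mathcal C_{loc}(I;B_H(R)_w)$. (Only note that condition (v) of Definition \ref{NSweak} is not available for elements of $\mathcal U_I^\sharp(R)$, which are solutions only on $\mathring I$; but your right-approximation argument covers the left endpoint without it.)

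The $\alpha$-model half, however, contains a genuine flaw caused by a misreading of \eqref{u-alpha-r}. The paper states just before \eqref{u-alpha} that ``solution of the Navier-Stokes-$\alpha$ model'' there means a solution in the sense of Definition \ref{def2}, i.e.\ the elements of $\mathcal U_I^\alpha(R)$ are already the transformed functions $\bw$ solving \eqref{ansnew}, with $|\bw(t)|\le R$ for all $t\in I$; the estimates \eqref{est1}--\eqref{est3} then apply to them directly, with $c_2\le c$, $M_2\le M$, and no passage between $\bu$ and $\bw$ is needed. In your version, where the elements are taken to be solutions $\bu$ of \eqref{ans} with only $|\bu(t)|\le R$, the step ``the energy machinery of \cite{VTC} gives $|\bw(t)|\le R$ as well once $R\ge R_0$, by the invariance remark following \eqref{est1}'' does not follow: that remark requires an initial bound $|\bw(t')|\le R$ at some time $t'$, whereas $|\bw(t')|\ge|\bu(t')|$, so the hypothesis $|\bu|\le R$ gives no upper bound on $|\bw(t')|$ (one would need control of $\alpha^2\|\bu(t')\|^2$ as well). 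So as written that case is not proved; it is repaired simply by using the paper's actual definition of $\mathcal U_I^\alpha(R)$, under which the inclusion $\mathcal U_J^\alpha(R)\subset\mathcal Y_J(R)$ is immediate, exactly as in \eqref{ujinyj}.
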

\begin{proof}
It is clear from estimates (\ref{ener-est}), (\ref{nsest2}) and
(\ref{nsest3}) that,  if $\bu\in \mU^\sharp(R)$, for some $R\geq R_0$, then
$\bu\in \Pi_{J}^{-1}\mY_I(R)$, for all compact interval $J\subset I$. Thus, it
follows from (\ref{yI}) that $\mathcal U_I^\sharp(R)\subset \mathcal Y_I(R)$.
With an analogous argument we can also prove that $\mathcal
U_I^\alpha(R)\subset \mathcal Y_I(R)$, for all $R\geq R_0$.
\end{proof}

Next, we prove an important convergence result concerning the trajectory spaces
$\mathcal U_I^{\sharp}(R)$ and $\mathcal U^{\alpha}_I(R)$, based on
Theorem \ref{v-t-c}.

\medskip

\begin{lemma}\label{dist}
Let $R\geq R_0$, where $R_0$ is defined by (\ref{r0}). Let $\mathcal U_I^{\sharp}(R)$ and $\mathcal
U^{\alpha}_I(R)$ be given by (\ref{u-sharp-r}) and (\ref{u-alpha-r}),
respectively. Then,
\[\lim_{\alpha\rightarrow 0}\dist_{\mathcal Y_I(R)}(\mathcal U_I^{\alpha}(R),
\mathcal U_I^{\sharp}(R))=0\]
where \[\displaystyle \dist_{\mathcal Y_I(R)}(\mathcal U_I^{\alpha}(R),
\mathcal
U_I^{\sharp}(R))=\sup_{\bw\in \mathcal U_I^{\alpha}(R)}\;\inf_{\bu\in \mathcal
U_I^{\sharp}(R)}d(\bw,\bu),\] and $d$ is any compatible metric in $\mathcal
Y_I(R)$.
\end{lemma}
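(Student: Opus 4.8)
The plan is to argue by contradiction, exploiting the compactness of $\mathcal Y_I(R)$ together with the convergence theorem for individual solutions (Theorem \ref{v-t-c}). Suppose the conclusion fails. Then there exist $\varepsilon>0$, a sequence $\alpha_n\to 0$, and trajectories $\bw_n\in\mathcal U_I^{\alpha_n}(R)$ such that
\[
\inf_{\bu\in\mathcal U_I^{\sharp}(R)}d(\bw_n,\bu)\geq\varepsilon,\qquad\forall n.
\]
Since $\mathcal U_I^{\alpha_n}(R)\subset\mathcal Y_I(R)$ for every $n$ (using $R\geq R_0$ and the inclusion established just above), the sequence $\{\bw_n\}$ lives in the compact metrizable space $\mathcal Y_I(R)$. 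Hence, passing to a subsequence (not relabeled), $\bw_n\to\bw$ in $\mathcal Y_I(R)$, i.e. in $\mathcal C_{loc}(I,H_w)$, for some $\bw\in\mathcal Y_I(R)$.

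The heart of the argument is to show $\bw\in\mathcal U_I^{\sharp}(R)$, which will contradict $d(\bw_n,\bw)\geq\varepsilon$ for all $n$. First I would upgrade the convergence $\bw_n\to\bw$: because each $\bw_n$ solves the Navier-Stokes-$\alpha_n$ model, the \emph{a~priori} estimates \eqref{est1}, \eqref{est2}, \eqref{est3} show that, on each compact interval $J\subset I$, the family $\{\bw_n\}$ is bounded in $\mathcal F^b_J$; by weak compactness and the fact that the weak limit must coincide with $\bw$ (the weak-$H$ convergence at each time pins down the limit), one gets $\bw_n\to\bw$ in the $\tau$ topology on each compact $J$, hence $\{\bw_n\}$ is a bounded sequence in $\mathcal F^b_{J}$ converging to $\bw$ in $\tau$. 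Applying Theorem \ref{v-t-c} on each such $J$ (with $\mathring J\subset\mathring I$), we conclude that $\bw$ is a Leray-Hopf weak solution of the 3D Navier-Stokes equations on $\mathring I$. Since also $\bw\in\mathcal Y_I(R)\subset\mathcal C_{loc}(I,B_H(R)_w)$, we have $|\bw(t)|\leq R$ for all $t\in I$, so in fact $\bw\in\mathcal U_I^{\sharp}(R)$ by the definition \eqref{u-sharp-r}. This is the desired contradiction.

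The main obstacle is the upgrade from the weak convergence in $\mathcal C_{loc}(I,H_w)$ (which is all that compactness of $\mathcal Y_I(R)$ gives directly) to the $\tau$-convergence required to invoke Theorem \ref{v-t-c}; this requires extracting the $L^2_{loc}(I;V)$-weak and $L^2_{loc}(I;D(A)')$-weak convergences of $\bw_n$ and $\partial_t\bw_n$ from the uniform bounds in \eqref{est2}–\eqref{est3}, and checking the limits are the expected ones. A secondary, more delicate point: the distance in the statement is a supremum over $\mathcal U_I^{\alpha}(R)$ for a \emph{fixed} small $\alpha$, and the contradiction hypothesis produces one offending trajectory per $\alpha_n$; one should make sure the quantifiers line up, i.e. that $\dist_{\mathcal Y_I(R)}(\mathcal U_I^{\alpha_n}(R),\mathcal U_I^{\sharp}(R))\geq\varepsilon$ indeed yields, for each $n$, some $\bw_n$ with $\inf_{\bu}d(\bw_n,\bu)>\varepsilon/2$, which is immediate from the definition of supremum. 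Finally, one notes the result is independent of the choice of compatible metric $d$ on the compact metrizable space $\mathcal Y_I(R)$, since all such metrics induce the same topology and the statement only asserts convergence to $0$.
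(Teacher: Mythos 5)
Your proposal is correct and follows essentially the same route as the paper's proof: argue by contradiction, use the a priori estimates \eqref{est2}--\eqref{est3} to get boundedness in $\mathcal F^b$, invoke Theorem \ref{v-t-c} to identify the limit as an element of $\mathcal U_I^{\sharp}(R)$, and contradict the $\varepsilon$-separation via convergence in the compact metrizable space $\mathcal Y_I(R)$. The only (immaterial) difference is the order of extraction: the paper first takes a $\tau$-convergent subsequence and then upgrades to convergence in $\mathcal C_{loc}(I,H_w)$ via Lemma \ref{ycompact} and a diagonal argument, whereas you first use compactness of $\mathcal Y_I(R)$ and then upgrade to $\tau$-convergence.
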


\begin{proof}
Suppose by contradiction that \[\lim_{\alpha\rightarrow 0}\dist_{\mathcal
Y_I(R)}(\mathcal U_I^{\alpha}(R),
\mathcal U_I^{\sharp}(R))\neq 0.\]
Thus, there exist $\varepsilon>0$ and a sequence of $\{\alpha_n\}_n$, with $\alpha_n\rightarrow 0$ as $n\rightarrow \infty$, such that 
\[\sup_{\bw\in \mathcal U_I^{\alpha_n}(R)}\;\inf_{\bu\in \mathcal
U_I^{\sharp}(R)}d(\bw,\bu)>\varepsilon, \;\forall n \in \mathbb N.\]
Observe that, from the definition of the supremum, we have that, 
given $r>0$,  there exists $\bw_n\in \mathcal
U_I^{\alpha_n}(R)$ such that 
\[\inf_{\bu\in\mathcal U_I^{\sharp}(R)}d(\bw_n, \bu)>\varepsilon -r, \;\forall
n\in
\mathbb N. \]
In particular, we can take $r=\varepsilon/2$ and obtain 
\[\inf_{\bu\in\mathcal U_I^{\sharp}(R)}d(\bw_n, \bu)>\frac{\varepsilon}{2},
\;\forall n\in\mathbb N, \]
so that 
\begin{eqnarray}\label{ineq}
d(\bw_n, \bu)>\frac{\varepsilon}{2}, \;\forall
n\in\mathbb N \mbox{ and } \forall \bu \in \mathcal U_I^{\sharp}(R).
\end{eqnarray}
On the other hand, we have that $|\bw_n(t)|_H\leq R$ for all $t\in I$ and for
all $n\in\mathbb N$. Thus, estimates (\ref{est2}) and
(\ref{est3}) imply that $\{\bw_{n}\}_n$ is bounded in $\mathcal F^b_I$ (see
(\ref{Fb})).
Therefore, there exist a subsequence $\{\bw_{n_l}\}_l$ of $\{\bw_{n}\}_n$ and
a function $\bu\in \mathcal F^b_I$ such that $\bw_{n_l}\rightarrow
\bu$ with respect
to $\tau$. Using Theorem \ref{v-t-c}, we conclude that $\bu \in \mathcal
U_I^{\sharp}(R)$.
Moreover, for any compact interval $J\subset I$, we have that $\{\bw_{n}\}_n$ is
in $\mathcal Y_J(R)$. Then using Lemma \ref{ycompact} we
conclude that there exists a subsequence of  $\{\bw_{n}\}_n$ that converges 
in the topology of weak converge in $H$ uniformly on $J$. We can now use
Cantor's diagonal argument to obtain a subsequence $\{\bw_{n_k}\}_{n_k}$ that
converges to $\bu$ in the topology of weak converge in $H$ uniformly on $J$, for
any compact interval $J\subset I$. Observe that this lead us to a
contradiction with (\ref{ineq}) since $d$ is a compatible metric with the
topology of $\mathcal Y_I(R)$. 
\end{proof}

\subsection{Statistical solutions}\label{sec1.6}

The notion of statistical solutions that is considered here was introduced by
Foias, Rosa and Temam, in  \cite{frtcr2010a,frtcr2010b} (see
also \cite{FRT,frtssp2}). We recall this definition in the context of the
Navier-Stokes equations and introduce a corresponding definition for the
Navier-Stokes-$\alpha$ model.

\subsubsection{Time-dependent statistical solutions}

We start with the definition of Vishik-Fursikov measure for
the Navier-Stokes equations which will give rise to the statistical
solutions for the Navier-Stokes equations.

\medskip
\begin{definition}\label{vfmdef}
Let $I\subset \mathbb R$ be an interval. We say that a Borel probability measure
$\rho$ in $\mathcal C_{loc}(I,H_w)$ is a \textbf{Vishik-Fursikov measure} over
$I$ if $\rho$ satisfies the following
\begin{itemize}
 \item [(i)] $\rho$ is carried by $\mathcal U^\sharp_I$;
 \item [(ii)] $t \mapsto \int_{\mathcal U^\sharp_I}|\bu(t)|^2d\rho(\bu) \in
L^\infty_{loc}(I)$;
 \item [(iii)] if $I$ is closed and bounded on the left, with left end point
$t_0$, then for all $\psi\in \Psi$ we have that
\[\lim_{t\rightarrow t_0^+} \int_{\mathcal
U^\sharp_I}\psi(|\bu(t)|^2)d\rho(\bu)=\int_{\mathcal
U^\sharp_I}\psi(|\bu(t_0)|^2)d\rho(\bu),\]
\end{itemize}
where $\Psi:=\{\psi\in
\mathcal C^1([0,\infty)): \psi\geq 0, \psi'\geq 0 \mbox{ and }
\sup_{t\geq 0}\psi'(t) < \infty\}$.
\end{definition}

Observe that in this definition we only require the measure $\rho$ to be carried
by $\mathcal U^\sharp_I$. But we really want to have $\rho$ carried
by $\mathcal U_I$, which is the trajectory space of Leray-Hopf weak solutions.
And this is what in fact happens. More precisely, in \cite[Theorem 4.1]{FRT},
it was proved that for an arbitrary interval $I\subset \mathbb R$, any
Vishik-Fursikov measure over $I$ is carried by $\mathcal U_I$.

Next we present the definition of a Foias-Prodi statistical solution, which is a
family of measures on the phase space, satisfying a Liouville-type equation and
some regularity properties. Let us denote by $\mathbb F$ the operator defined
on $V$ as $\mathbb F(\bu)=\f-\nu A\bu-B(\bu,\bu)$, with values in $V'$. A
function $\Phi:H\rightarrow
\mathbb R$ is called a \textbf{cylindric test function} if 
\[\Phi(\bu) = \varphi((\bu, \bv_1), \ldots, (\bu, \bv_k)),\]
where $k \in \mathbb N$, $\varphi$ is a continuously differentiable real-valued
function on $\mathbb R^k$ with compact support, and $\bv_1,\ldots, \bv_k$ belong
to $V$.
For such $\Phi$, we denote by $\Phi'$ its Fr\'echet derivative in $H$, which has
the form
\[\Phi'(\bu) =\sum_{j=1}^k\partial_j\varphi ((\bu, \bv_1 ),\ldots,(\bu,
\bv_k))\bv_j,\]
where $\partial_j\varphi$ is the derivative of $\varphi$ with respect to its
$j$-th coordinate.

\medskip
\begin{definition}\label{foias-prodi}A family $\{\mu_t\}_{t\geq 0}$ of
Borel probabilities on $H$ is a \textbf{statistical solution} of
the 3D Navier-Stokes equations if it satisfies:

\begin{itemize}
 \item [(i)]the Liouville type equation 
\[ \frac{d}{d t} \int_H \Phi(\bu)\;d\mu_t(\bu) = \int_H
\langle\bF(\bu),\Phi'(\bu)\rangle\;d\mu_t(\bu),
\]
in the distributional sense in $t\geq 0$, for all cylindric test functions
$\Phi$;
 \item[(ii)] the function 
\[ t\mapsto \int_H \phi(\bu) \;d\mu_t(\bu)
\]
is measurable in $t\geq 0$ for all continuous functional $\phi:H\rightarrow
\mathbb R$;
 \item[(iii)] the function 
\[  t\mapsto \int_H |\bu|_H^2 \;d\mu_t(\bu)
\]
belongs to $L_{loc}^\infty(0,\infty)$;
\item[(iv)]the function
\[ t\mapsto \int_H \|\bu\|_V^2 \;d\mu_t(\bu)
\]
belongs to $L_{loc}^1(0,\infty)$;
\item [(v)] the mean strengthened energy inequality holds, i.e.,
\[
  \frac{1}{2} \frac{d}{d t} \int_H \psi\left(|\bu|_H^2\right)
\;d\mu_t(\bu)
     + \nu \int_H \psi'\left(|\bu|_H^2\right)
       \|\bu\|_V^2\;d\mu_t(\bu) 
     \leq \\ \int_H
\psi'\left(|\bu|_H^2\right)\langle\f,\bu\rangle\;d\mu_t(\bu)
\]
in the distributional sense in $t\geq 0$, for all $\psi \in \Psi$;
\item [(vi)] and  the function
\[ t\mapsto \int_H \psi(|\bu|_H^2) \;d\mu_t(\bu)
\]
is continuous at $t=0$, for all $\psi\in \Psi$.
\end{itemize}
\end{definition}

In \cite{FRT}, it was proved that for any Borel probability measure $\mu_0$
on $H$ such that $\int_H|u|^2d\mu_0(u)< \infty$, there exists a
Vishik-Fursikov measure $\rho$ over $I=[t_0,\infty)$ such that
$\Pi_{t_0}\rho=\mu_0$. Furthermore, if $\rho$ is a Vishik-Fursikov measure then
$\mu_t:=\Pi_t\rho$, $t\in I$, is a statistical solution in the sense of
Definition
\ref{foias-prodi}. This yields a particular
type of statistical solution:

\medskip
\begin{definition} Let $I\subset \mathbb R$ be an arbitrary interval. A
\textbf{Vishik-Fursikov statistical solution} of the Navier-Stokes equations
over $I$ is a statistical solution $\{\rho_t\}_{t\in I}$ such
that $\rho_t=\Pi_t\rho$, for all $t\in I$, for some Vishik-Fursikov
measure $\rho$ over the interval $I$.
\end{definition}

Inspired by the definition of a Vishik-Fursikov measure, we define  the
$\alpha$-Vishik-Fursikov
measure which will give rise to the statistical solutions for the
Navier-Stokes-$\alpha$ model.

\medskip
\begin{definition}\label{vfalphamdef}
Let $I\subset \mathbb R$ be an interval and $\alpha>0$. We say that a Borel
probability measure
$\rho_\alpha$ in $\mathcal C_{loc}(I,H)$ is an \textbf{$\alpha$-Vishik-Fursikov
measure} over $I$ if
$\rho_\alpha$ satisfies the following
\begin{itemize}
 \item [(i)] $\rho_{\alpha}$ is carried by $\mathcal U^{\alpha}_I$;
 \item [(ii)] $t \mapsto \int_{\mathcal
U^{\alpha}_I}|\bw(t)|^2d\rho_{\alpha}(\bw)
\in L^\infty_{loc}(I)$.
\end{itemize}
\end{definition}

Since we chose to work with solutions of the Navier-Stokes-$\alpha$
model in the sense of Definition \ref{def2}, and thanks to condition (iii) of that definition, it is natural to define $\alpha$-Vishik-Fursikov measures 
on $\mC_{loc}(I,H)$. Since $\mU_I^\alpha$ is contained in $\mC_{loc}(I,H)$ as a
set, the condition (ii) of the Definition \ref{vfalphamdef} above makes sense. 

As another remark, since $\mC_{loc}(I,H)$ is continuously included in $\mC_{loc}(I,H_w)$ and the space $\mU^\alpha_I(R)$ is closed in $\mC_{loc}(I,H_w)$ (see Section \ref{sec1.5}), there is no need to consider a space analogous to $\mU^\sharp_I$, as in Definition \ref{vfmdef}.

\medskip
\begin{remark}\label{alphatight}
Observe that $\mathcal C_{loc}(I,H)$ is a Polish space (since it is Fr\'echet
and
separable) so that every Borel probability measure $\rho$ is tight.
Moreover, since the Borel sets of $H$ are the same as the Borel sets of $H_w$
(see \cite{fmrt2001a}) then the Borel sets of $\mathcal C_{loc}(I, H)$ are the
same as the Borel sets of $\mathcal C_{loc}(I, H_w)$. Therefore, every 
Borel probability measure in $\mathcal C_{loc}(I,H_w)$ is tight.
\end{remark}

We also define an \textbf{$\alpha$-Vishik-Fursikov statistical solution} of the
Navier-Stokes-$\alpha$ model over an arbitrary interval $I$ as a family
$\{\rho^\alpha_t\}_{t\in I}$  of Borel probability measures on $H$ such
that $\rho^\alpha_t=\Pi_t\rho_\alpha$, for all $t\in I$, for some 
$\alpha$-Vishik-Fursikov measure $\rho_\alpha$ over the interval $I$.

The existence of $\alpha$-Vishik-Fursikov measures is easy to obtain.
For instance, any Dirac measure in the trajectory space $\mathcal U_I^\alpha$ is
an $\alpha$-Vishik-Fursikov measure over $I$. Furthermore, given an initial
Borel probability  measure $\mu_0$ in $H$ with finite energy we can construct an
$\alpha$-Vishik-Fursikov measure $\rho_\alpha$ over $[0,\infty)$ such that
$\Pi_{0}\rho_\alpha=\mu_0$. Indeed, since the Navier-Stokes-$\alpha$ model is
well-posed, the solution semigroup $\{S_{\alpha}(t)\}_{t\geq 0}$ is
well-defined. Moreover, the operator
$S_{\alpha}(\cdot): H\rightarrow \mathcal C_{loc}(I,H)$, defined as
$S_\alpha(\cdot)\bw_0=\bw(\cdot)$ for $\bw_0\in H$, where
$S_{\alpha}(t)\bw_0=\bw(t)$ for all $t\in I$,  is continuous.
Therefore, given an initial Borel  probability  measure $\mu_0$ on $H$, we may
define the Borel  probability  measure $\rho_{\alpha}$ as
\[\rho_{\alpha}(E)=\mu_0(S_\alpha(\cdot)^{-1}E), \quad \mbox{ for all } E\in
\mathcal C_{loc}(I,H) \mbox{ Borel,  where } I=[0,\infty).\]
By construction,  it is clear that $\rho_\alpha$ is carried by the set $\mathcal
U^{\alpha}_I$. Moreover, since for each $\bw\in \mathcal U^\alpha_I$ it holds
that $|\bw(t)|\leq |\bw(0)| + 1/(\lambda_1^2\nu_1^2)\|\f\|^2_{L^\infty(I,H)}$,
for all $t\geq 0$, and
$\rho_{\alpha}=S_\alpha(\cdot)\mu_0$, then, using the Change of Variables
Theorem (see Section \ref{compactness}), we obtain that
\begin{equation*}
\begin{split}
\int_{\mathcal
U^{\alpha}_I}|\bw(t)|^2d\rho_\alpha(\bw)&\leq \int_{\mathcal
U^{\alpha}_I}|\bw(0)|^2d\rho_\alpha(\bw) +
\frac{1}{\lambda_1^2\nu_1^2}\|\f\|^2_{L^\infty(I,H)}
\\&=\int_{H} |(S_\alpha(\cdot)\bu)(0)|^2d\mu_0(\bu)+
\frac{1}{\lambda_1^2\nu_1^2}\|\f\|^2_{L^\infty(I,H)} 
\\&=\int_{H}
|\bu|^2d\mu_0(\bu)+
\frac{1}{\lambda_1^2\nu_1^2}\|\f\|^2_{L^\infty(I,H)}.
\end{split}
\end{equation*}
Consequently, since $\mu_0$ has finite energy we obtain that $t \mapsto
\int_{\mathcal U^{\alpha}_I}|\bw(t)|^2d\rho_\alpha(\bw)
\in L^\infty_{loc}([0,\infty))$. Therefore, $\rho_\alpha$ is an
$\alpha$-Vishik-Fursikov measure, and it is straightforward that
$\Pi_{0}\rho_\alpha=\mu_0$.

Nevertheless, since the well-posedness for the 3D
Navier-Stokes equations is an open problem, the abstract definition of
Vishik-Fursikov measure is essential in the context of this article.

\subsubsection{Stationary statistical solutions}

As already mentioned in the Introduction, stationary statistical solutions are
valuable in the study of turbulence in statistical equilibrium in time,
yielding, in particular, rigorous proofs of important statistical estimates. 
The concept of stationary statistical
solution represents a generalization of invariant measures for the semigroup
generated by an equation. For instance, in 2D, since the Navier-Stokes equations
has a well-defined semigroup, stationary statistical solutions are, under
certain hypothesis, precisely the invariant measures for the semigroup; see
\cite{fmrt2001a} for more details.

Suppose $I$ is an interval unbounded on the right, hence having one of the following forms:
$I=[t_0,\infty)$, $I=(t_0,\infty)$ or $I = \mathbb R$. We introduce the
time-shift operator $\sigma_{\tau}$ defined for any $\tau >0$ by
\begin{eqnarray*}
\sigma_{\tau}: \mathcal C_{loc}(I, H_w)&\rightarrow & C_{loc}(I, H_w) \\
\bu&\mapsto & (\sigma_{\tau}\bu)(t)=\bu(t+\tau), \; \forall t\in I.
\end{eqnarray*}
An \textbf{invariant
Vishik-Fursikov measure} over $I$ is a Vishik-Fursikov measure $\rho$  which is
invariant with respect to the translation semigroup $\{\sigma_\tau\}_{\tau\geq
0}$, in the sense that $\sigma_{\tau}\rho=\rho$ for all $\tau \geq 0$,
i.e., $\rho(E)= \rho(\sigma_\tau^{-1} E)$, for all Borel set $E$ in $\mathcal
C_{loc}(I, H_w)$.

The family of projections $\{\Pi_t\rho\}_{t\in I}$ of an invariant
Vishik-Fursikov measure $\rho$ has the property that 
any statistical information
\[ \int_H \phi(\bu) \rd(\Pi_t\rho)(\bu) = \int_H \phi(\bu(t))
\rd\rho(\bu) 
\]
is independent of the time variable $t\in I$, for any $\phi\in\mathcal
C_b(H_w)$. In fact, the measure $\Pi_t\rho$ itself is independent of $t$ and
is a statistical solution in the sense of Foias-Prodi which is time-independent
(what is called
a stationary statistical solution in the sense of Foias-Prodi). This yields
a particular type of stationary statistical solution as stated precisely below.

\medskip
\begin{definition}A \textbf{stationary Vishik-Fursikov statistical solution} on
$H$ is a Borel probability measure $\rho_0$ on $H$ which is a projection
$\rho_0=\Pi_t\rho$, at an arbitrary time $t\in I$, of an invariant
Vishik-Fursikov measure $\rho$ over an interval $I$ unbounded on the right.
\end{definition}

\section{Convergence of statistical solutions of the $\alpha$-model as
     $\alpha$ vanishes}\label{sec2}

In this section we present the main results of this paper. We prove that, under
certain conditions, statistical solutions of the Navier-Stokes-$\alpha$ model
converge to statistical solutions of the 3D Navier-Stokes equations as
$\alpha\rightarrow
0$. We first prove this result for  time-dependent statistical
solutions and then we address the particular case of stationary statistical
solutions.

As defined in Section \ref{compactness}, we mean by $\rho_\alpha
\stackrel{*}{\rightharpoonup} \rho$ in
$\mathcal P(X)$  that
\[\lim_{\alpha\rightarrow 0}
\int_X\varphi(x)\rd\rho_\alpha(x)=\int_X\varphi(x)\rd\rho(x), \mbox{ for all
}\varphi\in \mathcal C_b(X).\]
Observe that, by Theorem \ref{part-topsoe}, in the case when $X$ is a completely
regular Hausdorff space, which is the case for us, this is exactly the weak convergence $\wconv$ discussed in Section \ref{compactness}.

\subsection{Time-dependent statistical solutions}

In this section we state some results concerning the
convergence of $\alpha$-Vishik-Fursikov measures and statistical solutions as
$\alpha$ vanishes. We will
see that it suffices to impose some condition of uniform boundedness on
the mean kinetic energy over the $\alpha$-Vishik-Fursikov measures (or,
equivalently, on the $\alpha$-Vishik-Fursikov statistical solutions). This will
assure the tightness of the $\alpha$-Vishik-Fursikov measures and yield a
convergent subsequence. The convergence of these measures will imply the
convergence of the associated statistical solutions in a suitable sense,
described as follows.

Let $I\subset \mathbb R$ be an arbitrary interval.  We say
that a family $\{\{\mu^{\alpha}_t\}_{t\in I}\}_{\alpha>0}$ of
$\alpha$-Vishik-Fursikov
statistical solutions over $I$ converges, as $\alpha\rightarrow 0$, to a
Vishik-Fursikov statistical solution over $I$ if there exists a Vishik-Fursikov
statistical solution $\{\mu_t\}_{t\in I}$ over $I$ such that
\begin{equation}\label{stat-conv}
\lim_{\alpha\rightarrow 0}\int_{H}\phi(\bu)d\mu^{\alpha}_t(\bu)=\int_{H}
\phi(\bu)d\mu_t(\bu), \quad \forall t\in I \mbox{ and }\forall \phi \in
\mathcal C_b(H_w). 
\end{equation}

Recall that a Vishik-Fursikov statistical solution ($\alpha$-Vishik-Fursikov
statistical solution) over an interval $I$ is a family of Borel probability
measures $\{\mu_t\}_{t\in I}$ given by $\mu_t=\Pi_t\rho$, for all $t\in I$,
where $\rho$ is some Vishik-Fursikov measure ($\alpha$-Vishik-Fursikov
measure) over $I$. 

Thus, if we have a family of $\alpha$-Vishik-Fursikov measures
$\{\rho_{\alpha}\}_{\alpha>0}$ that converges to a Vishik-Fursikov measure
$\rho$
in $\mathcal P(\mathcal Y_I)$, i.e.,
\begin{equation}\label{mesu-conv}
\lim_{\alpha\rightarrow 0}\int_{\mathcal
Y_I}\varphi(\bu)d\rho_{\alpha}(\bu)=\int_{\mathcal Y_I}
\varphi(\bu)d\rho(\bu), \quad \forall \varphi \in
\mathcal C_b(\mathcal Y_I),
\end{equation}
then, as a simple consequence of the
Change of Variables Theorem (see Section \ref{compactness}), we obtain the
convergence of the $\alpha$-Vishik-Fursikov
statistical solutions  $\{\Pi_t\rho_{\alpha}\}_{t\in I}$ to the
Vishik-Fursikov statistical solution $\{\Pi_t\rho\}_{t\in I}$. 

The measures $\rho$ and $\rho_\alpha$ are defined on $\mC_{loc}(I,H_w)$ and
$\mC_{loc}(I,H)$, respectively. Since $\mC_{loc}(I,H)$ is included in
$\mC_{loc}(I,H_w)$ we may consider them  as measures on $\mC_{loc}(I,H_w)$. In
fact, since they are actually carried by the space of solutions $\mathcal U_I$
and $\mathcal U_I^\alpha$, respectively, and these spaces are included in
$\mY_I$, we may consider them restricted to $\mY_I$. With this in mind and for
the sake of simplicity we consider, in what follows, the measures $\rho$ and
$\rho_\alpha$ as measures on $\mY_I$. This is the same reason why we used
$\mY_I$ in the convergence \eqref{mesu-conv}.

We state first a lemma which is  essentially Theorem \ref{part-topsoe}
translated
into the framework of interest in this section. 

\medskip
\begin{lemma}\label{mtheo4}
Let $I$ be an arbitrary interval in $\mathbb R$ and 
$\{\rho_{\alpha}\}_{\alpha>0}$ be a family in $\mathcal P(\mathcal Y_I,t)$.
Suppose
that for every $\varepsilon >0$ there exists $R>0$ such that
$\rho_{\alpha}(\mathcal Y_I\setminus \mathcal Y_I(R))<\varepsilon$, for all
$\alpha>0$. Then, there exists a sequence $\{\rho_{\alpha_{n}}\}_n\subset
\{\rho_{\alpha}\}_{\alpha>0}$, with $\alpha_n\rightarrow 0$ as $n\rightarrow \infty$, such that $\rho_{\alpha_{n}}
\stackrel{*}{\rightharpoonup} \rho$ in $\mathcal P(\mathcal Y_I)$ as
$n\rightarrow \infty$.
\end{lemma}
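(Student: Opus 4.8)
The plan is to obtain the conclusion from Topsoe's compactness theorem, Theorem~\ref{part-topsoe}, keeping in mind that $\mathcal{Y}_I$ is not metrizable, so that theorem by itself only yields a convergent \emph{subnet}; the additional work is to upgrade this to a genuine sequence, and for that I would exploit that the measures $\rho_\alpha$ are, up to arbitrarily small mass, concentrated on the compact metrizable sets $\mathcal{Y}_I(R)$.

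First, I would fix an increasing sequence $R_j\nearrow\infty$ for which the hypothesis gives $\rho_\alpha(\mathcal{Y}_I\setminus\mathcal{Y}_I(R_j))<2^{-j}$ for all $\alpha>0$ and all $j\in\mathbb N$. Recall that each $\mathcal{Y}_I(R_j)$ is compact in $\mathcal{C}_{loc}(I,H_w)$ and, being a subset of $\mathcal{C}_{loc}(I,B_H(R_j)_w)$, is metrizable; hence it is a compact metric space and a closed, in particular Borel, subset of $\mathcal{Y}_I$, so that $A\mapsto\rho_\alpha(A\cap\mathcal{Y}_I(R_j))$ is a well-defined finite Borel measure of total mass at most $1$ on $\mathcal{Y}_I(R_j)$. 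Starting from an arbitrary sequence $\alpha_n'\to0^+$, for each fixed $j$ the sequence of these restricted measures lies in the set of measures of mass $\le1$ on the compact metric space $\mathcal{Y}_I(R_j)$, which is sequentially compact for the weak topology by Prohorov's theorem; a diagonal extraction then produces a subsequence $\{\alpha_n\}$ of $\{\alpha_n'\}$, still with $\alpha_n\to0$, such that for every $j$ the measures $A\mapsto\rho_{\alpha_n}(A\cap\mathcal{Y}_I(R_j))$ converge weakly on $\mathcal{Y}_I(R_j)$ as $n\to\infty$.

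The main point is then that, for every $\varphi\in\mathcal{C}_b(\mathcal{Y}_I)$, the real sequence $b_n:=\int_{\mathcal{Y}_I}\varphi\,\rd\rho_{\alpha_n}$ is Cauchy. Indeed, for any $j$ and any $n,m$,
\[
|b_n-b_m|\le\Bigl|\int_{\mathcal{Y}_I(R_j)}\varphi\,\rd\rho_{\alpha_n}
-\int_{\mathcal{Y}_I(R_j)}\varphi\,\rd\rho_{\alpha_m}\Bigr|
+\|\varphi\|_\infty\bigl(\rho_{\alpha_n}(\mathcal{Y}_I\setminus\mathcal{Y}_I(R_j))
+\rho_{\alpha_m}(\mathcal{Y}_I\setminus\mathcal{Y}_I(R_j))\bigr),
\]
where the last two terms are each $<\|\varphi\|_\infty 2^{-j}$, while the first term tends to $0$ as $n,m\to\infty$ because $\varphi$ restricts to a continuous function on the compact set $\mathcal{Y}_I(R_j)$ and the restricted measures converge weakly there; choosing $j$ large and then $n,m$ large shows that $(b_n)$ is Cauchy, hence convergent, and I would set $L(\varphi):=\lim_n\int_{\mathcal{Y}_I}\varphi\,\rd\rho_{\alpha_n}$.

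Finally, since each $\mathcal{Y}_I(R)$ is compact, the hypothesis says exactly that $\{\rho_{\alpha_n}\}_n$ is uniformly tight, so, as $\mathcal{Y}_I$ is a completely regular Hausdorff space (Lemma~\ref{spaceyi}), Theorem~\ref{part-topsoe} provides a subnet $\rho_{\alpha_{n_\beta}}\wsconv\rho$ with $\rho\in\mathcal{P}(\mathcal{Y}_I;t)$. For each $\varphi\in\mathcal{C}_b(\mathcal{Y}_I)$, the net $\bigl(\int\varphi\,\rd\rho_{\alpha_{n_\beta}}\bigr)_\beta$ is a subnet of the convergent real sequence $(b_n)_n$ and therefore has the same limit $L(\varphi)$; hence $\int\varphi\,\rd\rho=L(\varphi)=\lim_n\int\varphi\,\rd\rho_{\alpha_n}$ for all $\varphi\in\mathcal{C}_b(\mathcal{Y}_I)$, which is precisely $\rho_{\alpha_n}\wsconv\rho$ in $\mathcal{P}(\mathcal{Y}_I)$. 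The step I expect to be the main obstacle is this passage from subnet to subsequence: Topsoe's theorem gives compactness only in the non-metrizable space $\mathcal{P}(\mathcal{Y}_I;t)$, and it is the combination of the compact metrizable exhaustion $\{\mathcal{Y}_I(R_j)\}$, the diagonal extraction, and the resulting Cauchy property of the scalar sequences $(b_n)$ that converts it into honest sequential convergence.
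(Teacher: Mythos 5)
Your proof is correct, and it takes a genuinely more elaborate route than the paper's. The paper's own proof is essentially a one-liner: it extracts an arbitrary sequence $\alpha_n\to 0$, notes that $\mathcal Y_I$ is completely regular Hausdorff and that the hypothesis (with the compactness of the sets $\mathcal Y_I(R)$) gives uniform tightness, and then invokes Theorem \ref{part-topsoe}, calling the resulting convergent subnet a subsequence; in the remark immediately following the lemma the authors acknowledge that, strictly speaking, Topsoe's theorem only yields a subnet and that the later results could equally well be run with subnets. Your argument addresses exactly this point rather than glossing over it: by exhausting with the compact metrizable sets $\mathcal Y_I(R_j)$, restricting the measures there, using sequential weak-$*$ compactness of sub-probability measures on each compact metric space together with a diagonal extraction, you obtain that the scalar sequences $b_n=\int\varphi\,\rd\rho_{\alpha_n}$ are Cauchy for every $\varphi\in\mathcal C_b(\mathcal Y_I)$, and you then use Theorem \ref{part-topsoe} only to identify the limit functional with a tight Borel probability measure $\rho$, so that the full extracted sequence satisfies $\rho_{\alpha_n}\wsconv\rho$. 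What the paper's route buys is brevity, at the price of the subnet/subsequence imprecision it flags itself; what yours buys is an honest subsequence, at the price of the extra compact-exhaustion and diagonalization machinery. Two minor quibbles: invoking "Prohorov" for measures of mass at most $1$ on a compact metric space is a slight abuse (it is really Banach--Alaoglu plus Riesz representation, or Prohorov extended to uniformly bounded finite measures, with positivity preserved in the limit), and one should note, as you implicitly do, that $\mathcal Y_I(R_j)$ is closed in $\mathcal Y_I$ so the restricted set functions are genuine Borel measures on $\mathcal Y_I(R_j)$; both points are standard and do not affect the validity of the argument.
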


\begin{proof}
First, observe that since $\{\alpha\in \mathbb R: \alpha>0\}$ is a
totally ordered set we can subtract a sequence 
$\{\rho_{\alpha_{n}}\}_n$ from $\{\rho_{\alpha}\}_{\alpha}$, with
$\alpha_n\rightarrow 0$ as
$n\rightarrow \infty$. Moreover, observe that $\mathcal Y_I$ is a completely
regular Hausdorff space and  
$\mathcal Y_I(R)$ is compact, for any $R>0$. Therefore, it is clear
that the sequence $\{\rho_{\alpha_n}\}_{n\in \mathbb N}$ fulfills the hypothesis
of Theorem \ref{part-topsoe}. Therefore, there exist
$\rho \in \mathcal P(\mathcal Y_I,t)$ and a subsequence, which we still denote
by $\{\rho_{\alpha_n}\}_{n}\subset \{\rho_{\alpha}\}_{\alpha>0}$, such
that $\rho_{\alpha_{n}}\stackrel{*}{\rightharpoonup} \rho$ in $\mathcal
P(\mathcal Y_I)$. 
\end{proof}

We would like to highlight that the use of a sequence in Lemma \ref{mtheo4} is only for the sake of simplicity. Notice that Theorem \ref{part-topsoe} can be applied to the net $\{\rho_\alpha\}_\alpha$, as in Lemma \ref{mtheo4}, and it yields the existence of a convergent subnet of $\{\rho_\alpha\}_\alpha$. Then, all the subsequent results are also valid if we work with a convergent subnet instead of a sequence. 

Observe that in order to apply the last lemma to a family of $\alpha$-Vishik
Fursikov measures we need the sequence to be uniformly tight. A natural 
condition that yields this uniform tightness is obtained by
imposing a uniform boundedness condition on the mean kinetic energy of this
family. This uniform boundedness of the mean kinetic energy will also be
important to yield that the limit measure has finite mean kinetic energy and is
a Vishik-Fursikov measure. This uniform boundedness can be imposed in
different ways. We start with the following:

\medskip
\begin{proposition}\label{prop5}
Let $I$ be any interval in $\mathbb R$. Let $\{\rho_\alpha\}_{\alpha>0}$ be a
family of Borel probability measures on
$\mathcal Y_I$ such that, for each $\alpha>0$, $\rho_{\alpha}$ is
an $\alpha$-Vishik-Fursikov measure over $I$ and $\sup_{t\in
I}\int_{\mathcal Y_I}|\bw(t)|^2d\rho_{\alpha}(\bw)\leq C$, for all
$\alpha>0$, for some constant $C\geq 0$.
Then, there exists a sequence $\{\rho_{\alpha_n}\}_n\subset
\{\rho_{\alpha}\}_{\alpha>0}$, with $\alpha_n\rightarrow 0$, converging to
a Borel probability measure $\rho$ in $\mathcal Y_I$. Moreover, $\rho$ is
carried by $\mathcal U_I^\sharp$ and 
$\Pi_{\mathring{I}}\rho$ is a Vishik-Fursikov measure over $\mathring{I}$.
\end{proposition}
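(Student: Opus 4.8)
The plan is to prove the three assertions in turn: extract the convergent subsequence from uniform tightness; identify the carrier of the limit measure; and then verify the Vishik--Fursikov conditions for its time-restriction.

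For the extraction, I would first record that $\{\rho_\alpha\}$ is a family in $\mathcal P(\mathcal Y_I;t)$ (Remark~\ref{alphatight}), fix an exhausting sequence $\{J_n\}$ of compact subintervals of $I$ with left endpoints $a_n\downarrow\inf I$, and observe that, since $\rho_\alpha$ is carried by $\mathcal U_I^\alpha$ and $R\ge R_0$, the \emph{a~priori} estimate \eqref{est1} together with the inclusion \eqref{ujinyj} gives $\mathcal U_I^\alpha\cap\mathcal Y_I(R)=\mathcal U_I^\alpha(R)$, while \eqref{est1} also makes the sets $\{|\bw(a_n)|>R\}$ increase with $n$. Chebyshev's inequality and the energy bound then yield, uniformly in $\alpha$,
\[
\rho_\alpha(\mathcal Y_I\setminus\mathcal Y_I(R))=\lim_{n\to\infty}\rho_\alpha(\{|\bw(a_n)|>R\})\le\frac{1}{R^2}\sup_{t\in I}\int_{\mathcal Y_I}|\bw(t)|^2\,\rd\rho_\alpha(\bw)\le\frac{C}{R^2},
\]
so Lemma~\ref{mtheo4} supplies $\alpha_n\to 0$ and $\rho\in\mathcal P(\mathcal Y_I;t)$ with $\rho_{\alpha_n}\wsconv\rho$; applying the Portmanteau Lemma~\ref{portmanteau} to the closed sets $\mathcal Y_I(R)$ passes the bound $\rho(\mathcal Y_I\setminus\mathcal Y_I(R))\le C/R^2$ to the limit.

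The step I expect to be the main obstacle is showing that $\rho$ is carried by $\mathcal U_I^\sharp$; this is precisely where Lemma~\ref{dist} --- and hence the Vishik--Titi--Chepyzhov convergence of individual solutions, Theorem~\ref{v-t-c} --- is indispensable. Fixing $R\ge R_0$ and a compatible metric $d$ on the compact metrizable space $\mathcal Y_I(R)$, I would look at the closed neighborhoods $N_\delta=\{\bw\in\mathcal Y_I(R):d(\bw,\mathcal U_I^\sharp(R))\le\delta\}$, which are compact and hence closed in $\mathcal Y_I$. Lemma~\ref{dist} forces $\mathcal U_I^\alpha(R)\subset N_\delta$ for all sufficiently small $\alpha$, so $\rho_\alpha(N_\delta)\ge\rho_\alpha(\mathcal U_I^\alpha(R))=\rho_\alpha(\mathcal Y_I(R))\ge 1-C/R^2$; since $N_\delta$ is closed, Portmanteau gives $\rho(N_\delta)\ge\limsup_n\rho_{\alpha_n}(N_\delta)\ge 1-C/R^2$. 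Letting $\delta\downarrow 0$ and using that $\mathcal U_I^\sharp(R)$ is closed in $\mathcal Y_I(R)$ (so $\bigcap_k N_{1/k}=\mathcal U_I^\sharp(R)$) gives $\rho(\mathcal U_I^\sharp(R))\ge 1-C/R^2$; and letting $R\to\infty$ along a sequence $R_i\ge R_0$, together with $\mathcal U_I^\sharp(R_i)\subset\mathcal U_I^\sharp$, yields $\rho(\mathcal U_I^\sharp)=1$.

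Finally, to identify $\Pi_{\mathring I}\rho$: since $\mathring I$ is open, only conditions (i) and (ii) of Definition~\ref{vfmdef} need checking. For (i), restricting to $\mathring I$ an element of $\mathcal U_I^\sharp$ yields an element of $\mathcal U_{\mathring I}^\sharp=\mathcal U_{\mathring I}$, so $\mathcal U_I^\sharp\subset\Pi_{\mathring I}^{-1}(\mathcal U_{\mathring I}^\sharp)$ and $(\Pi_{\mathring I}\rho)(\mathcal U_{\mathring I}^\sharp)=1$. For (ii), I would pass the uniform energy bound to $\rho$ by applying Portmanteau to the bounded, nonnegative, lower semicontinuous functionals $\bw\mapsto\min\{|\bw(t)|^2,M\}$ (lower semicontinuity holding because closed balls of $H$ are weakly closed) and then letting $M\to\infty$ by monotone convergence, obtaining $\sup_{t\in I}\int_{\mathcal Y_I}|\bw(t)|^2\,\rd\rho(\bw)\le C$; the Change of Variables Theorem~\eqref{induced-measure} then transfers this to $t\mapsto\int_{\mathcal U_{\mathring I}^\sharp}|\bu(t)|^2\,\rd(\Pi_{\mathring I}\rho)(\bu)$, which is thus bounded on $\mathring I$ and in particular belongs to $L^\infty_{loc}(\mathring I)$. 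Aside from the second step, everything is tightness bookkeeping together with the Topsoe/portmanteau and change-of-variables apparatus already established in Sections~\ref{compactness} and \ref{sec1.5}.
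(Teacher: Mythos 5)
Your proposal is correct and follows essentially the same route as the paper: uniform tightness from the energy hypothesis and the \emph{a priori} estimate \eqref{est1}, extraction via Lemma \ref{mtheo4}, identification of the carrier through Lemma \ref{dist} (hence Theorem \ref{v-t-c}) together with the portmanteau lemma and the closedness of $\mathcal U_I^\sharp(R)$ in the compact metrizable set $\mathcal Y_I(R)$, and a truncation argument to pass the mean kinetic energy bound to the limit. Your technical implementations are mildly streamlined but equivalent --- Chebyshev at times $a_n\downarrow\inf I$ instead of bounding $\int\sup_t|\bw(t)|^2\,\rd\rho_\alpha$, closed $\delta$-neighborhoods in place of Urysohn functions, lower semicontinuous truncated norms in place of Galerkin projections with cut-offs, and the observation that $\mathcal U_I^\sharp(R_i)\subset\mathcal U_I^\sharp$ already gives $\rho(\mathcal U_I^\sharp)=1$, which lets you skip the paper's case distinction via the representations \eqref{represUclosed} and \eqref{represUopen}.
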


\begin{proof}
In order to prove the convergence we will use Lemma \ref{mtheo4}. First, recall
that every $\alpha$-Vishik-Fursikov measure is tight (Remark
\ref{alphatight}). Next, we check the uniform tightness condition. Observe that
it is enough to prove that for all $\varepsilon >0$ there exists $R\geq R_0$
such
that $\rho_{\alpha}(\mathcal U_I^{\alpha}\setminus \mathcal
U_I^{\alpha}(R))<\varepsilon$  for all $\alpha>0$. Indeed, since
$\rho_{\alpha}$ is carried by $\mathcal
U_I^{\alpha}$ we have that
\begin{equation*}
\begin{split}
\rho_{\alpha}(\mathcal Y_I\setminus \mathcal
Y_I(R))&=\rho_{\alpha}(\mathcal
Y_I\cap (\mathcal Y_I(R))^c)=\rho_{\alpha}(\mathcal
U_I^{\alpha}\cap (\mathcal Y_I(R))^c) 
\\ &\leq\rho_{\alpha}(\mathcal
U_I^{\alpha}\cap (\mathcal 
U_I^{\alpha}(R))^c)=\rho_{\alpha}(\mathcal
U_I^{\alpha}\setminus \mathcal 
U_I^{\alpha}(R)),
\end{split}
\end{equation*}
where the inequality holds since $\mathcal U_I^{\alpha}(R)\subset \mathcal
Y_I(R)$, which implies that $\mathcal Y_I(R)^c \subset (\mathcal
U_I^{\alpha}(R))^c$.

Let $R\geq R_0$. If $\mathcal U_I^{\alpha}\setminus \mathcal
U_I^{\alpha}(R)$
is not empty, then for all $\bw \in \mathcal
U_I^{\alpha}\setminus \mathcal U_I^{\alpha}(R)$, there exists $t_{\bw}\in I$
such that
$|\bw(t_{\bw})|^2>R$. Thus, $\sup_{t\in I}|\bw(t)|^2\geq R$ and 
\begin{equation}\label{est-rho}
R\rho_{\alpha}(\mathcal U^{\alpha}_I\setminus \mathcal
U^{\alpha}_I(R))\leq \int_{\mathcal U_I^{\alpha}\setminus \mathcal 
U_I^{\alpha}(R)}\sup_{t\in I}|\bw(t)|^2d\rho_{\alpha}(\bw).
\end{equation}

Since for all $\bw \in \mathcal U_I^{\alpha}$ we have that 
\[|\bw(t)|^2\leq |\bw(t')|^2e^{-\lambda_1\nu
(t-t')}+\frac{1}{\lambda_1^2\nu^2}\|\f\|^2_{L^{\infty}(t',t;
H)}\left(1-e^{-\lambda_1\nu(t-t')}\right),\]
for all $t,t'\in I$ with $t>t'$, then
\[\int_{\mathcal U_I^{\alpha}}\sup_{t>t'}|\bw(t)|^2d\rho_{\alpha}\leq
\int_{\mathcal U_I^{\alpha}} |\bw(t')|^2d\rho_{\alpha}
+\frac{1}{\lambda_1^2\nu^2}\|\f\|^2_{L^\infty(I,H)}.\]
Therefore, by the last inequality and the hypothesis $\sup_{t\in
I}\int_{\mathcal
Y_I}|\bw(t)|^2d\rho_{\alpha}(\bw)\leq C$, for all $\alpha>0$, it follows
that
\[\int_{\mathcal U_I^{\alpha}}\sup_{t>t'}|\bw(t)|^2d\rho_{\alpha}\leq
C_1,\]
for all $t'\in I$ and for all $\alpha>0$, where
$C_1=C+1/(\lambda_1^2\nu^2)\|\f\|^2_{L^\infty(I,H)}$.

Also observe that, for all $\bw \in \mathcal
U_I^{\alpha}$,  $\bw \in \mathcal C_{loc}(I,H)$ so that the function defined
by $f(t')=\sup_{t>t'}|\bw(t)|^2$
for $t'\in \bar{I}$, is continuous. Thus, for any $t_0\in \bar{I}$
and any sequence $\{t'_k\}_k\subset I$ such that $t'_k\rightarrow t_0^+$, using
the Monotone Convergence Theorem, we find that
\[\int_{\mathcal U_I^{\alpha}}\sup_{t>t_0}|\bw(t)|^2d\rho_{\alpha}\leq
\int_{\mathcal U_I^{\alpha}} |\bw(t')|^2d\rho_{\alpha}
\leq C_1, \quad \forall t_0\in \bar{I} \mbox{ and }\forall \alpha>0.\]
In particular, we obtain 
\[\int_{\mathcal U_I^{\alpha}}\sup_{t\in I}|\bw(t)|^2d\rho_{\alpha}\leq
C_1, \quad \forall\alpha>0.\]

We use the last estimate in \eqref{est-rho} to obtain, in the case that
$\mathcal U_I^{\alpha}\setminus \mathcal U_I^{\alpha}(R)$ is not empty, that
\[\rho_{\alpha}(\mathcal U^{\alpha}_I\setminus \mathcal
U^{\alpha}_I(R))\leq\frac{C_1}{R}, \qquad \forall \alpha>0.\]
If $\mathcal U_I^{\alpha}\setminus \mathcal U_I^{\alpha}(R)$ is empty, then
this estimate is trivially valid.
Then, given $\varepsilon >0$, take $R=\max\{2C_1/\varepsilon, R_0\}$, so that
 
\[\rho_{\alpha}(\mathcal U^{\alpha}_I\setminus \mathcal
U^{\alpha}_I(R))<\varepsilon, \qquad \forall \alpha>0.\]
This shows that $\{\rho_{\alpha}\}_{\alpha>0}$ is uniformly tight.

Then, we can apply Lemma \ref{mtheo4} and obtain the existence of a measure
$\rho\in\mathcal P(\mathcal Y_I,t)$ and a sequence
$\{\rho_{\alpha_n}\}_n\subset\{\rho_{\alpha}\}_{\alpha>0}$, with
$\alpha_n\rightarrow 0$ as $n\rightarrow \infty$, such that
$\rho_{\alpha_n}\stackrel{*}{\rightharpoonup} \rho$ in $\mathcal P(\mathcal
Y_I)$ as $n\rightarrow \infty$. 

Next, we prove that $\rho$ is carried by $\mU_I^\sharp$. In order to do so,
we define, for each $\varepsilon>0$ and $R\geq R_0$, the set  
\[\mY_{\varepsilon}(R)=\{u\in \mathcal Y_I(R): \dist_{\mathcal
Y_I(R)}(u,\mathcal
U_I^\sharp(R))<\varepsilon\}.\]
Observe that $\mathcal Y_I(R)\setminus \mY_{\varepsilon}(R)$ and $\overline{
\mY_{\varepsilon/2}(R)}$ are disjoint closed sets in $\mathcal Y_I(R)$. Since
$\mathcal Y_I(R)$ is a compact Hausdorff space then, by
Urysohn's Lemma (see
e.g. \cite{AB}), for each $\varepsilon>0$, there exists a continuous function
$\varphi_\varepsilon^R: \mathcal Y_I(R)\rightarrow [0,1]$, such that
$\varphi_\varepsilon^R(\bu)=1$ for all $\bu\in
\overline{\mY_{\varepsilon/2}(R)}$
and $\varphi_\varepsilon^R(\bu)=0$ for all $\bu \in\mathcal Y_I(R)\setminus
\mY_{\varepsilon}(R)$. Now, we define an extension of $\varphi_\epsilon^R$ to
$\mathcal Y_I$, $ \tilde \varphi_\varepsilon^R: \mathcal
Y_I\rightarrow [0,1]$ as $\tilde
\varphi_\varepsilon^R(\bu)=\varphi_\varepsilon^R(\bu)$, for all $\bu \in
\mathcal Y_I(R)$ and $\tilde \varphi_\varepsilon^R(\bu)=0$, for all $\bu \in
\mathcal Y_I\setminus  \mathcal Y_I(R)$. Since $\mY_I(R)$ is closed, it is easy to see that $\tilde
\varphi_\varepsilon^R$ is upper semicontinuous. Then, using that
$\rho_{\alpha_n}\stackrel{*}{\rightharpoonup} \rho$ in $\mathcal P(\mathcal
Y_I)$  and 
Lemma \ref{portmanteau}, we obtain that
\begin{equation}\label{est-int}
\int_{\mathcal Y_I}\tilde
\varphi_\varepsilon^R(\bu)d\rho(\bu)\geq
\limsup_{n\rightarrow \infty}\int_{\mathcal Y_I}\tilde
\varphi_\varepsilon^R(\bu)d\rho_{\alpha_n}(\bu)=\limsup_{n\rightarrow
\infty}\int_{\mathcal
U_I^{\alpha_n}(R)}\varphi_\varepsilon^R(\bu)d\rho_{\alpha_n}(\bu).
\end{equation}
Since we have, by Lemma \ref{dist}, that $\lim_{n\rightarrow
\infty}\dist_{\mathcal Y_I(R)}(\mathcal
U_I^{\alpha_n}(R),\mathcal U_I^\sharp(R))=0$ then, given $\varepsilon >0$, there
exists $N_0\in \mathbb N$ such that, for each $m\geq N_0$,
\[\dist_{\mathcal Y_I(R)}(\mathcal U_I^{\alpha_m}(R),\mathcal
U_I^\sharp(R))<\varepsilon/2,\]
which implies that for all $\bu \in \mathcal U_I^{\alpha_m}(R)$, $m\geq N_0$,
$\dist_{\mathcal Y_I(R)}(\bu,\mathcal
U_I^\sharp(R))<\varepsilon/2$. In other words, given $\varepsilon>0$ there
exists $N_0\in \mathbb N$ such that $\mathcal U_I^{\alpha_m}(R)\subset
\mY_{\varepsilon/2}(R)$, for all $m\geq N_0$ and therefore
$\varphi_\varepsilon^R(\bu)=1$ for all $\bu \in \mathcal U_I^{\alpha_m}(R)$, for
all $m\geq N_0$. Thus, for all $m\geq N_0$,
\[\int_{\mathcal
U_I^{\alpha_m}(R)}\varphi_\varepsilon^R(\bu)d\rho^{\alpha_m}(\bu)=\rho_{
\alpha_m}(\mathcal U_I^{\alpha_m}(R))\geq 1-\frac{C_1}{R},\]
hence, using the last estimate in \eqref{est-int}, we conclude that
\[\int_{\mathcal Y_I}\tilde \varphi_\varepsilon^R(\bu)d\rho(\bu)\geq
1-\frac{C_1}{R}.\]
Therefore, 
\[\rho(\mY_\varepsilon(R))\geq \int_{\mathcal Y_I}\tilde
\varphi_\varepsilon^R(\bu)d\rho(\bu)\geq
1-\frac{C_1}{R}.\]
Since $\mathcal U_I^\sharp(R) = \cap_{j=1}^\infty \mY_{\varepsilon_j}(R)$ for
any
sequence of positive numbers $\varepsilon_j \rightarrow 0$, we
obtain that

\[\rho(\mathcal U_I^\sharp(R))\geq 1-C_1/R, \quad \forall R\geq R_0.\] 
Now, in order to prove that $\rho(\mathcal U^\sharp_I)=1$, we first suppose
that $I$ is closed and bounded on the left. In this case we can
write $\mathcal U_I^\sharp= \cup_{i=1}^\infty \mathcal U_I^\sharp(R_i)$, for
any sequence $\{R_i\}_i$ with
$R_0\leq R_i\leq R_{i+1}$, for all $i\in \mathbb N$, and $R_i\rightarrow
\infty$.
Observe that $\mathcal U_I^\sharp(R_i)\subset \mathcal
U_I^\sharp(R_{i+1})$, for all
$i\in \mathbb N$. Therefore, 
\[\rho(\mathcal U^\sharp_I)=\rho\left(\bigcup_{i=1}^\infty \mathcal
U_I^\sharp(R_i)\right)=\lim_{i\rightarrow
\infty}\rho(\mathcal U_{I}^\sharp(R_i))\geq\lim_{i\rightarrow
\infty}\left(1-\frac{C_1}{R_i}\right)=1.\]

Otherwise, we can write $\mathcal
U^\sharp_I=\bigcap_{n=1}^\infty\bigcup_{i=1}^\infty
\Pi^{-1}_{J_n}\mathcal U^\sharp_{J_n}(R_i)$, for any sequence $\{R_i\}_i$ with
$R_0\leq R_i\leq R_{i+1}$, for all $i\in \mathbb N$, $R_i\rightarrow \infty$
and for any sequence  $\{J_n\}_n$ of compact subintervals of $I$ such that
$J_{n}\subset J_{n+1}$, for all $n\in \mathbb N$, and $\cup_n J_n=I$. Observe
that, for all compact
interval $J\subset I$ it is true that 
$\mathcal U_I^\sharp(R)\subset \Pi_J^{-1}\mathcal U^\sharp_{J}(R)$ so that
$\rho(\Pi^{-1}_{J}\mathcal U^\sharp_{J}(R))\geq 1-C_1/R$. Moreover, it is
easy to see the monotonicity properties
 \[ \bigcup_{i=1}^\infty
\Pi^{-1}_{J_{n}}\mathcal U^\sharp_{J_{n}}(R_i) \supset \bigcup_{i=1}^\infty
\Pi^{-1}_{J_{n+1}}\mathcal U^\sharp_{J_{n+1}}(R_i),\]
for all $n\in \mathbb N$, and 
\[\Pi^{-1}_{J_n}\mathcal U^\sharp_{J_{n}}(R_i)\subset \Pi^{-1}_{J_n}\mathcal
U^\sharp_{J_{n}}(R_{i+1}),\] 
for all $n, i \in \mathbb N$. Therefore,

\begin{equation*}
\begin{split}
\rho(\mathcal U^\sharp_I)&=\rho\left(\bigcap_{n=1}^\infty\bigcup_{i=1}^\infty
\Pi^{-1}_{J_n}\mathcal U^\sharp_{J_n}(R_i)\right)=\lim_{n\rightarrow
\infty}\rho\left(\bigcup_{i=1}^\infty\Pi^{-1}_{J_n}\mathcal
U_{J_n}^\sharp(R_i)\right)\\&=
\lim_{n\rightarrow \infty}\lim_{i\rightarrow
\infty}\rho(\Pi^{-1}_{J_n}\mathcal U_{J_n}^\sharp(R_i))
\geq \lim_{n\rightarrow \infty}\lim_{i\rightarrow
\infty}\left(1-\frac{C_1}{R_i}\right)=1.
\end{split}
 \end{equation*}
Thus, $\rho(\mathcal U^\sharp_I)=1$.

It remains to prove that $t\mapsto \int_{\mathcal Y_I}|\bu(t)|^2d\rho(\bu)$
belongs to $L_{loc}^{\infty}(I)$. In that direction,  we define an
increasing sequence of cut-off functions $\{\phi_M\}_M$, that is,
for all $M>0$ we have that $\phi_M\in \mC^\infty([0,\infty))$,
$0\leq \phi_M\leq 1$, $\phi_M(r)=1$ for $0\leq r\leq M$, $\phi(r)=0$
for $r\geq 2M$ and $\phi_M\leq \phi_{M+1}$. Now, take any $t'\in I$ and
observe that the function
$f_{M,k}$, defined by $f_{M,k}(\bu)=\phi_M(|P_k\bu(t')|^2)|P_k\bu(t')|^2$,
where $P_k$ is the Galerkin projector (see Section \ref{sec1.1}),
belongs to $\mC_b(\mY_I)$ and $f_{M,k}(\bu) \leq |\bu(t')|^2$, for all $\bu\in
\mathcal Y_I$. Then, from the convergence of $\rho_{\alpha_n}$ to
$\rho$ together with the hypothesis $\sup_{t\in I}\int_{\mathcal
Y_I}|\bw(t)|^2d\rho_{\alpha_n}(\bw)\leq C$, for all $n\in \mathbb N$, we
obtain
\[\int_{\mY_I}f_{M,k}(\bu)d\rho(\bu)=\lim_{n\rightarrow
\infty}\int_{\mY_I}f_{M,k}(\bu)d\rho_{\alpha_n}(\bu)\leq \limsup_{n\rightarrow
\infty}\int_{\mY_I}|\bu(t')|^2d\rho_{\alpha_n}(\bu)\leq C.\]

We can pass to the limit in the last inequality as $M\rightarrow \infty$
and, using the Monotone Convergence Theorem, we obtain that
\[\int_{\mY_I}|P_k\bu(t')|^2d\rho(\bu)=\lim_{M\rightarrow
\infty}\int_{\mY_I} f_{M,k}(\bu)d\rho(\bu)\leq C.\]
Again, using the Monotone Convergence Theorem, we can pass to the limit as
$k\rightarrow \infty$ to find that
\[\int_{\mY_I}|\bu(t')|^2d\rho(\bu)=\lim_{k\rightarrow
\infty}\int_{\mY_I}|P_k\bu(t')|^2d\rho(\bu) \leq C.\]
Now, since $t'\in I$ is arbitrary we obtain that $t\mapsto
\int_{\mathcal Y_I}|\bu(t)|^2d\rho(\bu)$ belongs to $L^{\infty}(I)$.
\end{proof}

The previous result has a corresponding statement in terms of statistical
solutions, which we write as follows.
\medskip
\begin{proposition}\label{prop5nv}
Let $I$ be any interval in $\mathbb R$ and let
$\{\{\mu_t^{\alpha}\}_{t\in I}\}_{\alpha>0}$ be a family of 
$\alpha$-Vishik-Fursikov statistical solutions over $I$, such that $\sup_{t\in
I}\int_{H}|\bw|^2d\mu_t^{\alpha}(\bw)\leq C$, for all $\alpha>0$, for some
constant $C\geq 0$.
Then, there exists a sequence $\{\{\mu_t^{\alpha_n}\}_{t\in I}\}_n\subset
\{\{\mu_t^{\alpha}\}_{t\in I}\}_{\alpha>0}$, with $\alpha_n\rightarrow 0$,
converging to a Vishik-Fursikov statistical solution $\{\mu_t\}_{t\in \mathring
I}$. 
\end{proposition}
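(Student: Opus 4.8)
The plan is to deduce this from Proposition \ref{prop5} by unwinding the definition of an $\alpha$-Vishik-Fursikov statistical solution and using the Change of Variables Theorem \eqref{induced-measure}. By definition, for each $\alpha>0$ there is an $\alpha$-Vishik-Fursikov measure $\rho_\alpha$ over $I$ with $\mu_t^\alpha=\Pi_t\rho_\alpha$ for all $t\in I$. Since $\rho_\alpha$ is carried by $\mathcal U_I^\alpha\subset\mathcal Y_I$ (Lemma \ref{spaceyi}), we regard each $\rho_\alpha$ as a Borel probability measure on $\mathcal Y_I$, exactly as in the discussion preceding Lemma \ref{mtheo4}.

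First I would verify that $\{\rho_\alpha\}_{\alpha>0}$ satisfies the hypothesis of Proposition \ref{prop5}. Fix $t\in I$ and $\alpha>0$. The operator $\Pi_t\colon\mathcal Y_I\to H_w$ is continuous, and the functional $\bu\mapsto|\bu|^2$ is integrable with respect to $\Pi_t\rho_\alpha$ since $\int_H|\bw|^2\rd\mu_t^\alpha(\bw)\le C<\infty$ by hypothesis; hence \eqref{induced-measure} gives
\[
\int_{\mathcal Y_I}|\bw(t)|^2\rd\rho_\alpha(\bw)=\int_H|\bu|^2\rd(\Pi_t\rho_\alpha)(\bu)=\int_H|\bu|^2\rd\mu_t^\alpha(\bu)\le C .
\]
Taking the supremum over $t\in I$ yields $\sup_{t\in I}\int_{\mathcal Y_I}|\bw(t)|^2\rd\rho_\alpha(\bw)\le C$ for every $\alpha>0$, so Proposition \ref{prop5} applies: there exist a sequence $\{\rho_{\alpha_n}\}_n\subset\{\rho_\alpha\}_{\alpha>0}$ with $\alpha_n\to0$ and a Borel probability measure $\rho$ on $\mathcal Y_I$ such that $\rho_{\alpha_n}\wsconv\rho$ in $\mathcal P(\mathcal Y_I)$, $\rho$ is carried by $\mathcal U_I^\sharp$, and $\Pi_{\mathring I}\rho$ is a Vishik-Fursikov measure over $\mathring I$.

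Now set $\mu_t:=\Pi_t\rho$ for $t\in\mathring I$. For such $t$ one has $\Pi_t\rho=\Pi_t(\Pi_{\mathring I}\rho)$, so $\{\mu_t\}_{t\in\mathring I}$ is the family of time projections of the Vishik-Fursikov measure $\Pi_{\mathring I}\rho$ over $\mathring I$ and is therefore, by definition, a Vishik-Fursikov statistical solution over $\mathring I$ (note that $\mathring I$ is open, so condition (iii) of Definition \ref{vfmdef} is vacuous). It remains to check \eqref{stat-conv} along the sequence $\{\alpha_n\}$. Fix $t\in\mathring I$ and $\phi\in\mathcal C_b(H_w)$. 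Since $\Pi_t\colon\mathcal Y_I\to H_w$ is continuous and $\phi$ is bounded and continuous, $\phi\circ\Pi_t\in\mathcal C_b(\mathcal Y_I)$, so, using \eqref{induced-measure} again together with $\rho_{\alpha_n}\wsconv\rho$,
\[
\int_H\phi(\bu)\rd\mu_t^{\alpha_n}(\bu)=\int_{\mathcal Y_I}\phi(\Pi_t\bw)\rd\rho_{\alpha_n}(\bw)\longrightarrow\int_{\mathcal Y_I}\phi(\Pi_t\bw)\rd\rho(\bw)=\int_H\phi(\bu)\rd\mu_t(\bu)
\]
as $n\to\infty$, which is precisely \eqref{stat-conv}.

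I do not expect a serious obstacle: all the analytic content — uniform tightness, the limit measure being carried by $\mathcal U_I^\sharp$, and its projection over $\mathring I$ being a genuine Vishik-Fursikov measure — is already packaged into Proposition \ref{prop5}, and the present statement is essentially its reformulation through the Change of Variables Theorem. The only mildly delicate points are the integrability of $|\cdot|^2$ required to invoke \eqref{induced-measure} (supplied exactly by the uniform bound on the mean kinetic energy) and the passage to the open interval $\mathring I$, which removes any condition at a left endpoint and lets $\{\mu_t\}_{t\in\mathring I}$ match the definition of a Vishik-Fursikov statistical solution.
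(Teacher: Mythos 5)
Your proposal is correct and follows essentially the same route as the paper: represent $\mu_t^{\alpha}=\Pi_t\rho_{\alpha}$, check that the family $\{\rho_{\alpha}\}_{\alpha>0}$ satisfies the hypothesis of Proposition \ref{prop5} (via the Change of Variables Theorem), and then transfer the convergence of $\rho_{\alpha_n}$ to the time projections, again by the Change of Variables Theorem. The only difference is that you spell out the verification of the hypothesis and the restriction to $\mathring I$ in more detail than the paper does, which is harmless.
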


\begin{proof}
By definition, for each $\alpha>0$, there exists an $\alpha$-Vishik-Fursikov
measure $\rho_{\alpha}$ over $I$ such that
$\mu_t^{\alpha}=\Pi_t\rho_{\alpha}$,
for all $t\in I$. Observe that the family $\{\rho_{\alpha}\}_{\alpha>0}$
fulfills
the hypothesis of Proposition \ref{prop5}, then there exists a sequence
$\{\rho_{\alpha_{n}}\}_n\subset \{\rho_{\alpha}\}_{\alpha>0}$, with
$\alpha_n\rightarrow 0$, that converges to a Borel probability measure 
$\rho$ in $\mathcal Y_I$ such that $\Pi_{\mathring{I}}\rho$ is a
Vishik-Fursikov
measure over $\mathring{I}$. Thus, as a simple consequence of the
Change
of Variables Theorem (see Section \ref{compactness}), we obtain the convergence
of the $\alpha_n$-Vishik-Fursikov
statistical solutions  $\{\Pi_t\rho_{\alpha_n}\}_{t\in I}$ to the
Vishik-Fursikov statistical solution $\{\Pi_t\rho\}_{t\in \mathring{I}}$. In
other words, $\{\{\mu_t^{\alpha_{n}}\}_{t\in I}\}_k$ converges to the
Vishik-Fursikov statistical solution $\{\mu_t\}_{t\in \mathring{I}}$, where
$\mu_t=\Pi_t\rho$, for all $t\in \mathring{I}$.
\end{proof}

A different way of obtaining the uniform tightness condition in the family
of $\alpha$-Vishik-Fursikov measure is to assume that the interval $I$ is
bounded and closed on the left and impose uniform boundedness on the initial
mean kinetic energy over the $\alpha$-Vishik-Fursikov measures. This is done in
the next result:

\medskip
\begin{corollary}\label{cor6}Let $I$ be an interval in $\mathbb R$ which is
bounded and closed on the left, with left end point $t_0$. Let
$\{\rho_{\alpha}\}_{\alpha>0}$ be a family of $\alpha$-Vishik-Fursikov measures
over $I$ such that $\int_{\mathcal
Y_I}|\bw(t_0)|^2d\rho_{\alpha}(\bw)\leq C$, for all $\alpha>0$, for
some constant $C\geq 0$. Then, there exists a sequence
$\{\rho_{\alpha_{n}}\}_n\subset \{\rho_{\alpha}\}_{\alpha>0}$, with
$\alpha_n\rightarrow 0$, that converges to a Borel probability
measure $\rho$ in $\mathcal Y_I$ such that 
$\rho$ is carried by $\mathcal U_I^\sharp$ and 
 $\Pi_{\mathring{I}}\rho$ is a Vishik-Fursikov measure over
$\mathring I$.
\end{corollary}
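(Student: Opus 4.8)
The plan is to reduce Corollary \ref{cor6} to Proposition \ref{prop5} by showing that the hypothesis on the initial mean kinetic energy at $t_0$, combined with the a~priori energy estimate \eqref{est1} for solutions of the Navier-Stokes-$\alpha$ model and the fact that each $\rho_\alpha$ is carried by $\mathcal U_I^\alpha$, yields the uniform bound $\sup_{t\in I}\int_{\mathcal Y_I}|\bw(t)|^2\,\rd\rho_\alpha(\bw)\leq C'$ for a constant $C'$ independent of $\alpha$. Once this is in hand, Proposition \ref{prop5} applies verbatim and gives the convergent sequence $\{\rho_{\alpha_n}\}_n$ together with the fact that the limit $\rho$ is carried by $\mathcal U_I^\sharp$ and that $\Pi_{\mathring I}\rho$ is a Vishik-Fursikov measure over $\mathring I$.

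To carry out the reduction, I would first fix $\alpha>0$ and note that for $\rho_\alpha$-almost every $\bw$ we have $\bw\in\mathcal U_I^\alpha$, so $\bw$ is a solution of the Navier-Stokes-$\alpha$ model on $I$ in the sense of Definition \ref{def2}. Since $I$ is closed on the left at $t_0$, estimate \eqref{est1} applied with $t'=t_0$ gives, for every $t\in I$ with $t\geq t_0$,
\[
|\bw(t)|^2\leq |\bw(t_0)|^2 e^{-\nu\lambda_1(t-t_0)}+\frac{1}{\lambda_1^2\nu^2}\|\f\|^2_{L^\infty(I,H)}\bigl(1-e^{-\nu\lambda_1(t-t_0)}\bigr)\leq |\bw(t_0)|^2+\frac{1}{\lambda_1^2\nu^2}\|\f\|^2_{L^\infty(I,H)}.
\]
Taking the supremum over $t\in I$ and integrating against $\rho_\alpha$, and using the hypothesis $\int_{\mathcal Y_I}|\bw(t_0)|^2\,\rd\rho_\alpha(\bw)\leq C$, we obtain
\[
\sup_{t\in I}\int_{\mathcal Y_I}|\bw(t)|^2\,\rd\rho_\alpha(\bw)\leq C+\frac{1}{\lambda_1^2\nu^2}\|\f\|^2_{L^\infty(I,H)}=:C',
\]
with $C'$ independent of $\alpha$. (A minor point to be careful with: the supremum over $t$ of the integrand is a measurable function of $\bw$ because each $\bw\in\mathcal U_I^\alpha\subset\mathcal C_{loc}(I,H)$, so $t\mapsto|\bw(t)|^2$ is continuous; one can also interchange sup and integral safely here since the bound is uniform in $t$, exactly as in the proof of Proposition \ref{prop5}.)

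With the uniform bound $C'$ established, the family $\{\rho_\alpha\}_{\alpha>0}$ satisfies the hypotheses of Proposition \ref{prop5} with $C$ replaced by $C'$, and the conclusions of that proposition are precisely the conclusions of the corollary. I do not expect any serious obstacle here; the only genuinely new ingredient compared to Proposition \ref{prop5} is the use of the left endpoint $t_0$ in \eqref{est1} to convert a bound on the energy at a single time into a bound uniform in time, and this works exactly because $I$ is assumed bounded and closed on the left so that $t_0\in I$ is an allowed time in the energy (in)equality. The boundedness of $I$ is not actually needed for this argument — closedness on the left suffices — but it does no harm to state it, and it is harmless to simply invoke Proposition \ref{prop5} as a black box once $C'$ is in hand.
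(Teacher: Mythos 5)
Your proposal is correct and is essentially the paper's own proof: the paper likewise applies the a~priori estimate \eqref{est1} with $t'=t_0$ to each $\bw\in\mathcal U_I^{\alpha}$, integrates against $\rho_\alpha$ (which is carried by $\mathcal U_I^{\alpha}$) to get $\sup_{\alpha>0}\sup_{t\in I}\int_{\mathcal Y_I}|\bw(t)|^2\,\rd\rho_\alpha(\bw)\leq C+\tfrac{1}{\lambda_1^2\nu^2}\|\f\|^2_{L^\infty(I,H)}$, and then invokes Proposition \ref{prop5}. One cosmetic remark: in the phrase ``bounded and closed on the left'' both adjectives modify ``on the left'' (so that a finite left end point $t_0$ exists and belongs to $I$), hence the hypothesis is exactly what your argument uses; no boundedness on the right is assumed or needed.
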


\begin{proof} We only need to check that the family $\{\rho_{\alpha}\}$
fulfills the hypothesis of Proposition \ref{prop5}. In order to do so, observe
that for all $\bw \in \mathcal U_I^{\alpha}$ and for all $t\in I$ with $t\geq
t_0$,
\[|\bw(t)|^2\leq |\bw(t_0)|^2e^{-\lambda_1\nu
(t-t_0)}+\frac{1}{\lambda_1^2\nu^2}\|\f\|^2_{L^{\infty}(t_0,t;
H)}(1-e^{-\lambda_1\nu(t-t_0)}),\]
therefore
\[\int_{\mathcal U_I^{\alpha}}|\bw(t)|^2d\rho_{\alpha}(\bw)\leq
\int_{\mathcal
U_I^{\alpha}}|\bw(t_0)|^2d\rho_{\alpha}(\bw)+\frac{1}{\lambda_1^2\nu^2}
\|\f\|^2_{L^{\infty}(t_0,t;H)}.\]
Since by hypothesis $\int_{\mathcal
Y_I}|\bw(t_0)|^2d\rho_{\alpha}(\bw)\leq C$, for all $\alpha>0$, and
$\rho_{\alpha}$ is carried by $\mU^{\alpha}_I$, then
\[\sup_{\alpha>0}\sup_{t\in I}\int_{\mathcal
Y_I}|\bw(t)|^2d\rho_{\alpha}(\bu)\leq
C_1,\]
where $C_1=C+1/(\nu^2\lambda_1^2)\|f\|^2_{L^\infty(I, H)}$.
\end{proof}

As before, the  previous result has a corresponding statement in terms of
statistical solutions, which we write as follows.
\medskip
\begin{corollary}\label{cor6nv}
Let $I$ be any interval in $\mathbb R$ and let $\{\{\mu_t^{\alpha}\}_{t\in
I}\}_{\alpha>0}$ be a family of 
$\alpha$-Vishik-Fursikov statistical solutions over $I$, such that
$\int_{H}|\bw|^2d\mu_{t_0}^{\alpha}(\bw)\leq C$, for all $\alpha>0$, for
some constant $C\geq 0$. 
Then, there exists a sequence $\{\{\mu_t^{\alpha_n}\}_{t\in
I}\}_{n}\subset\{\{\mu_t^{\alpha}\}_{t\in I}\}_{\alpha>0}$, with
$\alpha_n\rightarrow 0$, that converges to
a Vishik-Fursikov statistical solution $\{\mu_t\}_{t\in \mathring I}$. 
\end{corollary}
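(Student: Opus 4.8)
The plan is to reduce the statement to Corollary \ref{cor6} at the level of trajectory measures, exactly as Proposition \ref{prop5nv} was reduced to Proposition \ref{prop5} (here, as the hypothesis indicates, $t_0$ is the left end point of $I$, which is assumed closed and bounded on the left as in Corollary \ref{cor6}). By the definition of an $\alpha$-Vishik-Fursikov statistical solution, for each $\alpha>0$ there is an $\alpha$-Vishik-Fursikov measure $\rho_{\alpha}$ over $I$ with $\mu^{\alpha}_t=\Pi_t\rho_{\alpha}$ for all $t\in I$. Applying the Change of Variables Theorem \eqref{induced-measure} to the continuous projection $\Pi_{t_0}$ and to the Borel (possibly unbounded) function $\bu\mapsto|\bu|^2$, which is $\mu^{\alpha}_{t_0}$-integrable precisely because of the hypothesis, one obtains
\[\int_{\mathcal Y_I}|\bw(t_0)|^2\,d\rho_{\alpha}(\bw)=\int_H|\bu|^2\,d\mu^{\alpha}_{t_0}(\bu)\leq C,\qquad\forall\,\alpha>0,\]
where we used that $\rho_{\alpha}$ is carried by $\mathcal U^{\alpha}_I\subset\mathcal Y_I$. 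Hence the family $\{\rho_{\alpha}\}_{\alpha>0}$ of $\alpha$-Vishik-Fursikov measures over $I$ satisfies the hypotheses of Corollary \ref{cor6}.

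Next I would invoke Corollary \ref{cor6}: there is a sequence $\{\rho_{\alpha_n}\}_n\subset\{\rho_{\alpha}\}_{\alpha>0}$, with $\alpha_n\to 0$, such that $\rho_{\alpha_n}\stackrel{*}{\rightharpoonup}\rho$ in $\mathcal P(\mathcal Y_I)$ for some Borel probability measure $\rho$ on $\mathcal Y_I$ which is carried by $\mathcal U^{\sharp}_I$ and for which $\Pi_{\mathring I}\rho$ is a Vishik-Fursikov measure over $\mathring I$. Define $\mu_t:=\Pi_t\rho$ for $t\in\mathring I$. Since $\mu_t=\Pi_t(\Pi_{\mathring I}\rho)$ and $\Pi_{\mathring I}\rho$ is a Vishik-Fursikov measure over $\mathring I$, the family $\{\mu_t\}_{t\in\mathring I}$ is, by definition, a Vishik-Fursikov statistical solution over $\mathring I$.

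It then remains to verify that $\{\{\mu^{\alpha_n}_t\}_{t\in I}\}_n$ converges to $\{\mu_t\}_{t\in\mathring I}$ in the sense of \eqref{stat-conv} (read over $\mathring I$). Fix $t\in\mathring I$ and $\phi\in\mathcal C_b(H_w)$. The map $\bw\mapsto\phi(\Pi_t\bw)=\phi(\bw(t))$ is bounded and continuous on $\mathcal Y_I$, since $\Pi_t\colon\mathcal C_{loc}(I,H_w)\to H_w$ is continuous and $\phi\in\mathcal C_b(H_w)$; thus it belongs to $\mathcal C_b(\mathcal Y_I)$. Applying the Change of Variables Theorem \eqref{induced-measure} to $\rho_{\alpha_n}$ and to $\rho$, and then using the defining property of $\rho_{\alpha_n}\stackrel{*}{\rightharpoonup}\rho$ (Theorem \ref{part-topsoe}), we get
\[\int_H\phi(\bu)\,d\mu^{\alpha_n}_t(\bu)=\int_{\mathcal Y_I}\phi(\bw(t))\,d\rho_{\alpha_n}(\bw)\longrightarrow\int_{\mathcal Y_I}\phi(\bw(t))\,d\rho(\bw)=\int_H\phi(\bu)\,d\mu_t(\bu),\]
which is exactly the asserted convergence; this completes the argument.

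Since the whole proof is a routine combination of Corollary \ref{cor6} with the Change of Variables Theorem, I do not expect a genuine obstacle. The only points requiring a little care are the bookkeeping of which interval ($I$ versus $\mathring I$) each object lives on, and checking that the two transfer functions used above — namely $\bu\mapsto|\bu|^2$, used to pass the energy bound from $\mu^{\alpha}_{t_0}$ to $\rho_{\alpha}$, and $\bw\mapsto\phi(\bw(t))$, used to pass the weak-$*$ convergence from $\rho_{\alpha_n}$ to $\mu^{\alpha_n}_t$ — are, respectively, Borel and $\mu^{\alpha}_{t_0}$-integrable, and bounded and continuous on $\mathcal Y_I$, so that \eqref{induced-measure} and the weak-$*$ convergence genuinely apply.
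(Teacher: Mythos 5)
Your proof is correct and follows essentially the same route the paper intends: the paper gives no separate argument for Corollary \ref{cor6nv}, expecting exactly the reduction you carry out, namely passing to the underlying $\alpha$-Vishik-Fursikov measures, verifying the hypothesis of Corollary \ref{cor6} via the Change of Variables Theorem, and transferring the weak-$*$ convergence back through the projections $\Pi_t$, just as in the proof of Proposition \ref{prop5nv}. Your observation that $t_0$ must be the left end point of an interval closed and bounded on the left (so that Corollary \ref{cor6} applies) correctly resolves the imprecision in the statement's phrasing.
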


In the last two results we obtain a
limit of $\alpha$-Vishik-Fursikov measures over an interval $I$ which is a
Vishik-Fursikov measure only in the interior of the interval $I$. If $I$ is
an interval open on the left then the limit is in fact a Vishik-Fursikov
measure over the whole interval $I$. The problem is when $I$ is closed and
bounded on the left, since in this case we may lose the continuity at the
left end point of $I$ of the strengthened mean kinetic energy for the limit
measure (see condition $(iii)$ of the Definition \ref{vfmdef}). This problem is, in fact, inherited from an analogous problem for individual weak solutions, as described in Section \ref{sec1.5}, and which led us to introduce the spaces $\mU_I^\sharp(R)$ and $\mU_I^\sharp$.

In the next
result
we impose some conditions over a family of
$\alpha$-Vishik-Fursikov measures
in order to have that the limit is in fact a Vishik-Fursikov measure over
the
whole interval $I$. 

In fact we impose conditions only on the initial measures
$\Pi_{t_0}\rho_\alpha$, where $t_0$ is the left end point of the interval $I$.
These initial measures shall converge in a slightly stronger sense to a measure
$\mu_0$ in $H_w$. Since the $\sigma$-algebra of Borel sets with respect to the
weak topology of $H$ coincides with that with respect to the strong topology we
may consider $\mu_0$ as a Borel probability either on $H$ or on $H_w$. 

\medskip
\begin{theorem}\label{mtheo6}Let $I$ be an interval in $\mathbb R$ 
bounded and closed on the left, with left end point $t_0$, and let
$\{\rho_{\alpha}\}_{\alpha>0}$ be a family of $\alpha$-Vishik-Fursikov measures
over $I$. In addition, suppose that there
exists a Borel probability measure $\mu_0$ on $H$ such that
\begin{itemize}
\item [(i)] $\int_{H}|\bu|^2d\mu_0(\bu)<\infty$;
\item [(ii)] $\Pi_{t_0}\rho_{\alpha}\stackrel{*}{\rightharpoonup} \mu_0$ in
$\mathcal P(H_w)$;
\item [(iii)] and for all $\psi\in \Psi$
\[\lim_{\alpha\rightarrow
0}\int_{\mathcal Y_I}\psi(|\bu(t_0)|^2)d\rho_{\alpha}(\bu)=\int_H\psi(|\bu|
^2)d\mu_0(\bu).\] 
\end{itemize}
Then, there exists a sequence
$\{\rho_{\alpha_{n}}\}_n\subset \{\rho_{\alpha}\}_{\alpha>0}$, with
$\alpha_n\rightarrow 0$, that converges to a  Borel
probability measure $\rho$ in $\mathcal Y_I$ such that $\rho$ is a
Vishik-Fursikov measure over $I$ and $\Pi_{t_0}\rho=\mu_0$. 
\end{theorem}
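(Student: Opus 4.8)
The plan is to bootstrap from Corollary~\ref{cor6}, which already handles the boundedness of the initial mean kinetic energy via hypothesis~(i) and yields a subsequence $\rho_{\alpha_n} \stackrel{*}{\rightharpoonup} \rho$ in $\mathcal P(\mathcal Y_I)$ with $\rho$ carried by $\mathcal U_I^\sharp$ and $\Pi_{\mathring I}\rho$ a Vishik-Fursikov measure over $\mathring I$. What remains is to upgrade this to a Vishik-Fursikov measure over the whole closed interval $I$, which amounts to two things: first, identifying $\Pi_{t_0}\rho = \mu_0$, and second, verifying condition~(iii) of Definition~\ref{vfmdef}, namely the right-continuity at $t_0$ of $t\mapsto \int \psi(|\bu(t)|^2)\rd\rho(\bu)$ for all $\psi\in\Psi$. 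Along the way we also need $\rho$ carried by $\mathcal U_I$ rather than just $\mathcal U_I^\sharp$, but by the cited result \cite[Theorem 4.1]{FRT} that follows automatically once $\rho$ is known to be a Vishik-Fursikov measure over $I$; alternatively right-continuity in $H$ at $t_0$ can be extracted from the strengthened energy estimate once (iii) is in place.

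First I would establish $\Pi_{t_0}\rho = \mu_0$. The projection $\Pi_{t_0}\colon \mathcal Y_I \to H_w$ is continuous, so for any $\phi\in\mathcal C_b(H_w)$ the function $\phi\circ\Pi_{t_0}$ is in $\mathcal C_b(\mathcal Y_I)$, and by the Change of Variables Theorem \eqref{induced-measure} together with $\rho_{\alpha_n}\stackrel{*}{\rightharpoonup}\rho$ we get $\int_H \phi\rd(\Pi_{t_0}\rho) = \lim_n \int_H \phi\rd(\Pi_{t_0}\rho_{\alpha_n})$. By hypothesis~(ii) the right-hand side equals $\int_H\phi\rd\mu_0$. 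Since both $\Pi_{t_0}\rho$ and $\mu_0$ are tight Borel probability measures on the completely regular Hausdorff space $H_w$, the uniqueness statement \eqref{uniq-measu} gives $\Pi_{t_0}\rho = \mu_0$. Together with hypothesis~(i) this already gives condition~(ii) of Definition~\ref{vfmdef} at $t=t_0$, and combined with the $L^\infty_{loc}(\mathring I)$ bound from Corollary~\ref{cor6} and the energy estimate \eqref{est1} propagated forward, the full $L^\infty_{loc}(I)$ membership follows.

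The main obstacle is condition~(iii): the right-continuity at $t_0$ of the strengthened mean energy. The strategy is to use the strengthened energy estimate \eqref{streng-ener}, valid for each solution $\bw$ of the Navier-Stokes-$\alpha$ model, which gives $\psi(|\bu(t)|^2) \leq \psi(|\bu(t_0)|^2) + \frac{1}{\lambda_1\nu}\|\f\|_{L^\infty(I,H)}(\sup_{r\geq 0}\psi'(r))(t-t_0)$, $\rho_{\alpha_n}$-almost surely. Integrating and passing to the limit — using that $\bu\mapsto\psi(|P_k\bu(t)|^2)$-type truncations are in $\mathcal C_b(\mathcal Y_I)$, the Monotone Convergence Theorem to remove the truncation, the convergence $\rho_{\alpha_n}\stackrel{*}{\rightharpoonup}\rho$, and crucially hypothesis~(iii) to control $\int\psi(|\bu(t_0)|^2)\rd\rho_{\alpha_n}(\bu) \to \int\psi(|\bu|^2)\rd\mu_0(\bu) = \int\psi(|\bu(t_0)|^2)\rd\rho(\bu)$ — yields $\limsup_{t\to t_0^+}\int\psi(|\bu(t)|^2)\rd\rho(\bu) \leq \int\psi(|\bu(t_0)|^2)\rd\rho(\bu)$. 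For the reverse inequality, $\liminf_{t\to t_0^+}\int\psi(|\bu(t)|^2)\rd\rho(\bu) \geq \int\psi(|\bu(t_0)|^2)\rd\rho(\bu)$, I would exploit weak lower semicontinuity: each $\bu\in\mathcal U_I^\sharp$ is weakly continuous in $H$, so $|\bu(t_0)|^2 \leq \liminf_{t\to t_0^+}|\bu(t)|^2$, hence $\psi(|\bu(t_0)|^2)\leq\liminf_{t\to t_0^+}\psi(|\bu(t)|^2)$ since $\psi$ is continuous and nondecreasing; then apply Fatou's lemma. Combining the two inequalities gives the limit and hence condition~(iii). The delicate point throughout is that one never integrates an unbounded function directly against the weakly-convergent $\rho_{\alpha_n}$; every passage to the limit must be routed through bounded continuous truncations $f_{M,k}(\bu)=\phi_M(|P_k\bu(t)|^2)\psi(|P_k\bu(t)|^2)$ as in the proof of Proposition~\ref{prop5}, with the outer limits $M\to\infty$, $k\to\infty$ justified by monotone convergence and the uniform energy bound.
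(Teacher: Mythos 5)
Your proposal follows essentially the same route as the paper's proof: apply Corollary \ref{cor6} to extract $\rho_{\alpha_n}\stackrel{*}{\rightharpoonup}\rho$, identify $\Pi_{t_0}\rho=\mu_0$ via continuity of $\Pi_{t_0}$, the Change of Variables Theorem and the uniqueness statement \eqref{uniq-measu}, and then obtain Definition \ref{vfmdef}(iii) by pairing Fatou's lemma (weak continuity at $t_0$ plus $\psi$ nondecreasing) for the liminf with the strengthened energy estimate \eqref{streng-ener}, the bounded continuous truncations $f_{M,k}$, monotone convergence and hypothesis (iii) for the limsup. One small correction: the uniform bound $\int_{\mathcal Y_I}|\bu(t_0)|^2\,\rd\rho_{\alpha}(\bu)\leq C$ needed to invoke Corollary \ref{cor6} does not come from hypothesis (i) alone, as you state, but from hypothesis (iii) applied to the identity $\psi(r)=r\in\Psi$ together with (i) (this is how the paper verifies it, modulo its typo ``$\psi\equiv 1$'').
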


\begin{proof}
Observe that by taking $\psi \equiv 1$ we can see that the family
$\{\rho_{\alpha}\}_{\alpha>0}$ fulfills the hypothesis of Corollary \ref{cor6}.
Thus, there exist a sequence
$\{\rho_{{\alpha_n}}\}_n\subset \{\rho_{\alpha}\}_{\alpha>0}$, with
$\alpha_n\rightarrow 0$, and Borel probability measure $\rho$ in $\mathcal
Y_I$ such that $\rho_{\alpha_{n}}\stackrel{*}{\rightharpoonup} \rho$
in $\mathcal P(\mathcal Y_I)$, as $n\rightarrow \infty$, $\rho$ is carried by
$\mathcal U_I^\sharp$, and $\Pi_{\mathring
I}\rho$ is a Vishik-Fursikov measure over
$\mathring I$. Then, in order to obtain
that $\rho$ is a Vishik-Fursikov measure over $I$, it remains to prove that
for all $\psi\in \Psi$ it holds true that
\begin{equation}\label{relation1}\lim_{t\rightarrow t_0^+} \int_{\mathcal
U^\sharp_I}\psi(|\bu(t)|^2)d\rho(\bu)=\int_{\mathcal
U^\sharp_I}\psi(|\bu(t_0)|^2)d\rho(\bu).
\end{equation}

Observe that for any $\phi \in \mathcal
C_b(H_w)$ the function defined by $\varphi= \phi\circ \Pi_{t_0}|_{\mathcal Y_I}$
belongs to $\varphi \in \mathcal
C_b(\mathcal Y_I)$. Then, it is straightforward from the convergence of
$\rho_{\alpha_n}$, from hypothesis $(ii)$ and from the Change of Variables
Theorem that 
\[\int_{H}\phi(\bu)d\mu_0(\bu)=\int_{\mathcal
Y_I}\phi(\bu(t_0))d\rho(\bu), \mbox{ for all } \phi \in \mathcal
C_b(H_w).\]
Therefore, since $H_w$ is a completely regular Hausdorff space and $\mu_0, 
\Pi_{t_0}\rho \in \mathcal P(H_w;t)$  we obtain
that $\Pi_{t_0}\rho=\mu_0$ (see \eqref{uniq-measu}).

Now, take $\psi\in\Psi$ and observe that, since any $\bu\in \mathcal
U^\sharp_I$ is weakly continuous at $t_0$ and $\psi$ is nondecreasing and
continuous, then $\psi(|\bu(t_0)|^2)\leq \liminf_{t\rightarrow
t_0}\psi(|\bu(t)|^2)$. Therefore, it follows from Fatou's Lemma that 
\begin{equation}\label{liminf}
\int_{\mathcal
U^\sharp_I}\psi(|\bu(t_0)|^2)d\rho(\bu)\leq \liminf_{t\rightarrow
t_0}\int_{\mathcal
U^\sharp_I}\psi(|\bu(t)|^2)d\rho(\bu).
\end{equation}
Let $\phi_M$ be a function in $\mathcal C^1([0,\infty))$ defined as
$\phi_M(r)=1$, for all $0\leq r\leq M$, $\phi_M(r)=0$,
for all $r\geq 2M$, $\phi_{M}\leq
\phi_{M+1}$, and $0\leq \phi_{M} \leq 1$, for all $M\in \mathbb N$. It is clear
that
$\varphi_m(\bu):=\psi(|P_m\bu(t)|^2)\phi_M(|P_m\bu(t)|^2)$
belongs to $\mathcal
C_b(\mathcal Y_I)$, for any $t\in I$, so that
\begin{equation*}
 \int_{\mathcal
Y_I}\psi(|P_m\bu(t)|^2)\phi_M(|P_m\bu(t)|^2)d\rho(\bu)=\lim_{
n\rightarrow \infty}\int_{\mathcal
Y_I}\psi(|P_m\bu(t)|^2)\phi_M(|P_m\bu(t)|^2)d\rho_{\alpha_n}(\bu).
\end{equation*}

Since $\phi_M\leq 1$ and $\psi(|P_m\bu(t)|^2)\leq
\psi(|\bu(t)|^2)$ we find that the right hand side of the last identity is
bounded above by $\limsup_{n\rightarrow
\infty}\int_{\mathcal Y_I}\psi(|\bu(t)|^2)d\rho_{\alpha_n}(\bu)$. On the other
hand,
using the Monotone Convergence Theorem twice we obtain that
\[\lim_{m\rightarrow \infty}\lim_{M\rightarrow \infty}\int_{\mathcal
Y_I}\psi(|P_m\bu(t)|^2)\phi_M(|P_m\bu(t)|^2)d\rho(\bu)=\int_{\mathcal
Y_I}\psi(|\bu(t)|^2)d\rho(\bu).\]
Therefore, for any $t\in I$, 
\begin{eqnarray}\label{ineq1}
\int_{\mathcal Y_I}\psi(|\bu(t)|^2)d\rho(\bu)\leq \limsup_{n\rightarrow
\infty}\int_{\mathcal Y_I}\psi(|\bu(t)|^2)d\rho_{\alpha_n}(\bu).
\end{eqnarray}
Now, using inequality \eqref{ineq1}, estimate \eqref{streng-ener} and the facts
that $\rho_\alpha$ is carried by $\mathcal U_I^\alpha$ and 
$\psi$ is nondecreasing, we obtain
\begin{equation*}
\begin{split}
&\limsup_{t\rightarrow t_0}\int_{\mathcal Y_I}\psi(|\bu(t)|^2)d\rho(\bu)\leq
\limsup_{t\rightarrow
t_0}\limsup_{n\rightarrow \infty}\int_{\mathcal
Y_I}\psi(|\bu(t)|^2)d\rho_{\alpha_n}(\bu)\\&
\leq \limsup_{t\rightarrow
t_0}\limsup_{n\rightarrow \infty}\left(\int_{\mathcal
Y_I}\psi(|\bu(t_0)|^2)d\rho_{\alpha_n}(\bu)+\frac{1}{
\lambda_1\nu}\|f\|_{L^\infty(I,H)}\sup_{r\geq
0}\psi'(r)(t-t_0)\right)\\ &
\leq \limsup_{n\rightarrow \infty}\int_{\mathcal
Y_I}\psi(|\bu(t_0)|^2)d\rho_{\alpha_n}(\bu)\leq 
\int_H\psi(|\bu|^2)d\mu_0(\bu),
\end{split}
\end{equation*}
where the last inequality follows from hypothesis $(iii)$.

To conclude, we use that $\Pi_{t_0}\rho=\mu_0$ in the last inequality together
with \eqref{liminf} to obtain that (\ref{relation1}) holds true.
\end{proof}

Now, we write the previous result in terms of statistical solutions.
\medskip
\begin{theorem}\label{mtheo6nv}Let $I$ be an interval in $\mathbb R$ 
bounded and closed on the left, with left end point $t_0$, and let
$\{\{\mu_t^{\alpha}\}_{t\in I}\}_{\alpha>0}$ be a family of 
$\alpha$-Vishik-Fursikov statistical solutions over $I$. In
addition, suppose that there
exists a Borel probability measure $\mu_0$ on $H$ such that
\begin{itemize}
\item [(i)] $\int_{H}|\bu|^2d\mu_0(\bu)<\infty$;
\item [(ii)] $\mu_{t_0}^{\alpha}\stackrel{*}{\rightharpoonup} \mu_0$ in
$\mathcal P(H_w)$;
\item [(iii)] and for all $\psi\in \Psi$
\[\lim_{\alpha\rightarrow
0}\int_{H}\psi(|\bu|^2)d\mu_{t_0}^{\alpha}(\bu)=\int_H\psi(|\bu|
^2)d\mu_0(\bu).\] 
\end{itemize}
Then, there exists a sequence $\{\{\mu_t^{\alpha_n}\}_{t\in I}\}_n \subset
\{\{\mu_t^{\alpha}\}_{t\in I}\}_{\alpha>0}$, with
$\alpha_n\rightarrow 0$, that converges  to a Vishik-Fursikov
statistical solution
$\{\mu_t\}_{t\in I}$ such that $\mu_{t_0}=\mu_0$.
\end{theorem}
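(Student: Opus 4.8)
The plan is to reduce the statement to the measure-level result Theorem~\ref{mtheo6} by means of the Change of Variables Theorem, exactly in the spirit of the passage from Proposition~\ref{prop5} to Proposition~\ref{prop5nv}. By the definition of an $\alpha$-Vishik-Fursikov statistical solution, for each $\alpha>0$ there is an $\alpha$-Vishik-Fursikov measure $\rho_\alpha$ over $I$ with $\mu_t^\alpha=\Pi_t\rho_\alpha$ for all $t\in I$; as discussed before Lemma~\ref{mtheo4}, each $\rho_\alpha$ may be regarded as a Borel probability measure on $\mathcal Y_I$. First I would verify that the family $\{\rho_\alpha\}_{\alpha>0}$ satisfies the three hypotheses of Theorem~\ref{mtheo6}. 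Hypothesis (i) is identical. For (ii), note that $\Pi_{t_0}\rho_\alpha=\mu_{t_0}^\alpha$, so the assumed convergence $\mu_{t_0}^\alpha\wsconv\mu_0$ in $\mathcal P(H_w)$ is precisely hypothesis (ii) of Theorem~\ref{mtheo6}. For (iii), the Change of Variables Theorem applied to $\Pi_{t_0}\colon\mathcal Y_I\to H_w$ yields $\int_{\mathcal Y_I}\psi(|\bu(t_0)|^2)\rd\rho_\alpha(\bu)=\int_H\psi(|\bu|^2)\rd\mu_{t_0}^\alpha(\bu)$ for every $\psi\in\Psi$, whose right-hand side converges to $\int_H\psi(|\bu|^2)\rd\mu_0(\bu)$ by assumption; this is hypothesis (iii) of Theorem~\ref{mtheo6}.

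Applying Theorem~\ref{mtheo6}, I then obtain a sequence $\{\rho_{\alpha_n}\}_n\subset\{\rho_\alpha\}_{\alpha>0}$, with $\alpha_n\to0$, such that $\rho_{\alpha_n}\wsconv\rho$ in $\mathcal P(\mathcal Y_I)$ for some Borel probability measure $\rho$ on $\mathcal Y_I$ which is a Vishik-Fursikov measure over the whole interval $I$ and which satisfies $\Pi_{t_0}\rho=\mu_0$. Setting $\mu_t:=\Pi_t\rho$ for all $t\in I$, it follows by definition that $\{\mu_t\}_{t\in I}$ is a Vishik-Fursikov statistical solution over $I$, and $\mu_{t_0}=\Pi_{t_0}\rho=\mu_0$.

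It remains to check the convergence of the statistical solutions in the sense of \eqref{stat-conv}. Fix $t\in I$ and $\phi\in\mathcal C_b(H_w)$. Since $\Pi_t\colon\mathcal C_{loc}(I,H_w)\to H_w$ is continuous, its restriction to $\mathcal Y_I$ is continuous, so $\varphi:=\phi\circ\Pi_t|_{\mathcal Y_I}\in\mathcal C_b(\mathcal Y_I)$. The Change of Variables Theorem gives, for every $n$, $\int_H\phi(\bu)\rd\mu_t^{\alpha_n}(\bu)=\int_{\mathcal Y_I}\varphi(\bu)\rd\rho_{\alpha_n}(\bu)$ and likewise $\int_H\phi(\bu)\rd\mu_t(\bu)=\int_{\mathcal Y_I}\varphi(\bu)\rd\rho(\bu)$; passing to the limit $n\to\infty$ and using $\rho_{\alpha_n}\wsconv\rho$ yields $\lim_n\int_H\phi(\bu)\rd\mu_t^{\alpha_n}(\bu)=\int_H\phi(\bu)\rd\mu_t(\bu)$. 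Since $t\in I$ and $\phi\in\mathcal C_b(H_w)$ were arbitrary, this is exactly \eqref{stat-conv}.

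Since essentially all of the analytic content is already packaged inside Theorem~\ref{mtheo6} (and ultimately in Lemma~\ref{dist} and Theorem~\ref{v-t-c}), I do not expect any genuinely new difficulty here: the work consists of the bookkeeping translation of the hypotheses and conclusions through the Change of Variables Theorem. The one point worth stating explicitly is that the convergence \eqref{stat-conv} holds for \emph{every} $t\in I$ rather than merely for almost every $t$, which is automatic because the argument above is carried out at each fixed time slice $\Pi_t$ separately and uses only the continuity of $\Pi_t$ together with the weak-$*$ convergence of the $\rho_{\alpha_n}$.
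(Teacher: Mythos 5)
Your proof is correct and follows essentially the same route as the paper: identify the underlying $\alpha$-Vishik-Fursikov measures $\rho_\alpha$ with $\mu_t^\alpha=\Pi_t\rho_\alpha$, check (via the Change of Variables Theorem) that they satisfy the hypotheses of Theorem~\ref{mtheo6}, and then push the resulting convergence $\rho_{\alpha_n}\wsconv\rho$ forward through $\Pi_t$ to get \eqref{stat-conv} and $\mu_{t_0}=\mu_0$. The paper's proof is just a terser version of this same reduction, leaving the hypothesis-checking and the final change-of-variables step implicit where you spell them out.
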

\begin{proof}
It follows from the definition of $\alpha$-Vishik-Fursikov
statistical solution the existence of an
$\alpha$-Vishik-Fursikov measure
$\rho_{\alpha}$ over $I$ such that $\mu_t^{\alpha}=\Pi_t\rho_{\alpha}$,
for all $t\in I$. It is clear that the family
$\{\rho_{\alpha}\}_{\alpha>0}$ fulfills
the hypothesis of Theorem \ref{mtheo6} so that there exists a sequence
$\{\rho_{\alpha_{n}}\}_n\subset \{\rho_{\alpha}\}_{\alpha>0}$ that converges
to a Vishik-Fursikov measure $\rho$
over $I$. Therefore, the corresponding
sequence $\{\{\mu_t^{\alpha_{n}}\}_{t\in I}\}_n$ converges to
the Vishik-Fursikov statistical solution $\{\mu_t\}_{t\in I}$, where
$\mu_t=\Pi_t\rho$, for all $t\in I$.
\end{proof}

A case of particular interest for approximation purposes is when the measures
$\rho_\alpha$ have the same initial projection $\Pi_{t_0}\rho_\alpha=\mu_0$,
for all $\alpha$. This leads us to the following two corollaries, in terms of
Vishik-Fursikov measures and Vishik-Fursikov statistical solutions,
respectively.

\medskip
\begin{corollary}\label{maincortimedepss} Let $I$ be an interval in $\mathbb R$
closed and bounded on the left, with left end point $t_0$, and $\mu_0$ a Borel
probability on $H$ such that $\int_H|\bu|^2d\mu_0(\bu)<\infty$. Let
$\{\rho_{\alpha}\}_{\alpha>0}$ be a family of $\alpha$-Vishik-Fursikov measures
over $I$ such that $\Pi_{t_0}\rho_{\alpha}=\mu_0$, for all $\alpha>0$. Then,
there exists a sequence $\{\rho_{\alpha_{n}}\}_n\subset
\{\rho_{\alpha}\}_{\alpha>0}$, with $\alpha_n\rightarrow 0$, that converges to
a Borel probability
measure $\rho$ in $\mathcal Y_I$ such that $\rho$ is a Vishik-Fursikov measure
over $I$ and $\Pi_{t_0}\rho=\mu_0$. 
\end{corollary}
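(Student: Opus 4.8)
The plan is to derive this corollary directly from Theorem \ref{mtheo6} by checking that its three hypotheses are satisfied in the present, more restrictive, situation where $\Pi_{t_0}\rho_\alpha=\mu_0$ for every $\alpha>0$. Hypothesis (i) of Theorem \ref{mtheo6} is exactly the finite-energy assumption $\int_H|\bu|^2\,d\mu_0(\bu)<\infty$ that we are given. Hypothesis (ii), namely $\Pi_{t_0}\rho_\alpha\wsconv\mu_0$ in $\mathcal P(H_w)$, is immediate: since $\Pi_{t_0}\rho_\alpha=\mu_0$ for all $\alpha>0$, this is a constant net (and any sequence extracted from it is constant), hence it converges to $\mu_0$ trivially.

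The only point that requires an actual, if short, argument is hypothesis (iii), that $\lim_{\alpha\to 0}\int_{\mathcal Y_I}\psi(|\bu(t_0)|^2)\,d\rho_\alpha(\bu)=\int_H\psi(|\bu|^2)\,d\mu_0(\bu)$ for all $\psi\in\Psi$. First I would observe that for $\psi\in\Psi$ one has $\psi(r)\leq\psi(0)+r\sup_{s\geq 0}\psi'(s)$, so that $\bu\mapsto\psi(|\bu|^2)$ is a nonnegative Borel function on $H$ dominated by an affine function of $|\bu|^2$, hence $\mu_0$-integrable by (i). Writing $\psi(|\bu(t_0)|^2)=(\psi(|\cdot|^2)\circ\Pi_{t_0})(\bu)$ and applying the Change of Variables Theorem \eqref{induced-measure} with the continuous map $F=\Pi_{t_0}\colon\mathcal Y_I\to H_w$, one gets
\[
\int_{\mathcal Y_I}\psi(|\bu(t_0)|^2)\,d\rho_\alpha(\bu)=\int_H\psi(|\bu|^2)\,d(\Pi_{t_0}\rho_\alpha)(\bu)=\int_H\psi(|\bu|^2)\,d\mu_0(\bu),
\]
using $\Pi_{t_0}\rho_\alpha=\mu_0$. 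Thus the left-hand side is independent of $\alpha$ and equals the right-hand side identically, so the limit in (iii) holds trivially.

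Having verified (i)--(iii), Theorem \ref{mtheo6} produces a sequence $\{\rho_{\alpha_n}\}_n\subset\{\rho_\alpha\}_{\alpha>0}$ with $\alpha_n\to 0$ that converges in $\mathcal P(\mathcal Y_I)$ to a Borel probability measure $\rho$ which is a Vishik-Fursikov measure over $I$ and satisfies $\Pi_{t_0}\rho=\mu_0$, which is precisely the assertion of the corollary. Since the statement is a direct specialization of Theorem \ref{mtheo6} to the case of a constant initial projection, there is no genuine obstacle in the proof; the only step not completely automatic is the $\mu_0$-integrability of $\psi(|\cdot|^2)$, which is handled by the growth condition built into the definition of $\Psi$ together with the finite-energy hypothesis on $\mu_0$.
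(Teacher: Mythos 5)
Your proof is correct and follows exactly the route the paper intends: the corollary is stated as a direct specialization of Theorem \ref{mtheo6}, and your verification of hypotheses (i)--(iii) — with (ii) and (iii) trivial because $\Pi_{t_0}\rho_\alpha=\mu_0$ is constant, and the change of variables plus the growth bound on $\psi\in\Psi$ handling the integrand at $t_0$ — is precisely the implicit argument.
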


\medskip
\begin{corollary}\label{maincortimedepssnv} Let $I$ be an interval in $\mathbb
R$ closed and bounded on the left, with left end point $t_0$, and $\mu_0$ a
Borel probability on $H$ such that $\int_H|\bu|^2d\mu_0(\bu)<\infty$. Let
$\{\{\mu_t^{\alpha}\}_{t\in I}\}_{\alpha>0}$ be a family of 
$\alpha$-Vishik-Fursikov statistical solutions over $I$ such that
$\mu_{t_0}^{\alpha}=\mu_0$, for all $\alpha>0$. Then, there exists a sequence 
$\{\{\Pi_t\rho_{\alpha_n}\}_{t\in I}\}_n\subset \{\{\mu_t^{\alpha}\}_{t\in
I}\}_{\alpha>0}$, with $\alpha_n\rightarrow 0$, that converges to a
Vishik-Fursikov statistical solution $\{\mu_t\}_{t\in I}$ such that
$\mu_{t_0}=\mu_0$. 
\end{corollary}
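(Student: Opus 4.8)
The plan is to reduce this statement to Corollary \ref{maincortimedepss} in exactly the same way that Theorem \ref{mtheo6nv} was reduced to Theorem \ref{mtheo6}, the only additional ingredient being the Change of Variables Theorem, which is what converts the convergence of the measures on the trajectory space into the convergence \eqref{stat-conv} of the time-projected families on $H$.

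First I would unwind the definitions. By the definition of an $\alpha$-Vishik-Fursikov statistical solution, for each $\alpha>0$ there is an $\alpha$-Vishik-Fursikov measure $\rho_\alpha$ over $I$ with $\mu_t^\alpha=\Pi_t\rho_\alpha$ for all $t\in I$; in particular, the hypothesis $\mu_{t_0}^\alpha=\mu_0$ translates into $\Pi_{t_0}\rho_\alpha=\mu_0$ for every $\alpha$. Since $\mu_0$ has finite mean kinetic energy by assumption, the family $\{\rho_\alpha\}_{\alpha>0}$ satisfies precisely the hypotheses of Corollary \ref{maincortimedepss}. (Here one uses, as recorded in Section \ref{sec1.6}, that each $\rho_\alpha$, a priori a Borel probability measure on $\mC_{loc}(I,H)$, is carried by $\mathcal U_I^\alpha\subset\mathcal Y_I$ and that the Borel structures involved are compatible, so that $\rho_\alpha$ may legitimately be regarded as a Borel probability measure on $\mathcal Y_I$.)

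Applying Corollary \ref{maincortimedepss}, I obtain a sequence $\{\rho_{\alpha_n}\}_n\subset\{\rho_\alpha\}_{\alpha>0}$, with $\alpha_n\to 0$, converging in $\mathcal P(\mathcal Y_I)$ to a Borel probability measure $\rho$ which is a Vishik-Fursikov measure over $I$ and satisfies $\Pi_{t_0}\rho=\mu_0$. It then remains to transfer this weak-$*$ convergence on $\mathcal Y_I$ to the convergence \eqref{stat-conv} of the associated families on $H$. For this, fix $t\in I$ and $\phi\in\mathcal C_b(H_w)$; since $\Pi_t\colon\mC_{loc}(I,H_w)\to H_w$ is continuous, its restriction to $\mathcal Y_I$ is continuous, so $\phi\circ\Pi_t|_{\mathcal Y_I}\in\mathcal C_b(\mathcal Y_I)$. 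Hence $\int_{\mathcal Y_I}\phi(\bu(t))\,d\rho_{\alpha_n}(\bu)\to\int_{\mathcal Y_I}\phi(\bu(t))\,d\rho(\bu)$, and by the Change of Variables Theorem \eqref{induced-measure} applied with $F=\Pi_t$ the two sides equal $\int_H\phi\,d\mu_t^{\alpha_n}$ and $\int_H\phi\,d\mu_t$, where $\mu_t:=\Pi_t\rho$. This is precisely \eqref{stat-conv}, and $\{\mu_t\}_{t\in I}$ is then the required Vishik-Fursikov statistical solution, with $\mu_{t_0}=\Pi_{t_0}\rho=\mu_0$.

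I do not expect any genuine obstacle here: the substantive analysis has already been carried out in Proposition \ref{prop5}, Corollary \ref{cor6}, Theorem \ref{mtheo6} and Corollary \ref{maincortimedepss}. The only points requiring a little care are checking that $\phi\circ\Pi_t$ really is a bounded continuous function on $\mathcal Y_I$ (so the weak-$*$ convergence may be tested against it) and invoking the Change of Variables Theorem correctly; both are routine.
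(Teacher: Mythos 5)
Your proposal is correct and follows essentially the route the paper intends: the corollary is just the special case of Theorem \ref{mtheo6nv} (equivalently, Corollary \ref{maincortimedepss} plus the projection argument) in which all initial projections $\Pi_{t_0}\rho_\alpha$ coincide with $\mu_0$, so that hypotheses (ii) and (iii) hold trivially, and the passage from convergence of the measures $\rho_{\alpha_n}$ in $\mathcal P(\mathcal Y_I)$ to the convergence \eqref{stat-conv} of the projected families is exactly the Change of Variables argument already used for Proposition \ref{prop5nv} and Theorem \ref{mtheo6nv}. No gaps.
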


\subsection{Stationary statistical solution}

We say that a
family $\{\mu_{\alpha}\}_{\alpha>0}$ of
stationary $\alpha$-Vishik-Fursikov statistical
solutions converges to a stationary Vishik-Fursikov statistical
solution if there exists a Borel probability measure $\mu$ on $H$ such that
$\mu$ is a stationary Vishik-Fursikov statistical solution and 
\[\lim_{\alpha\rightarrow
0}\int_H\phi(\bu)\rd\mu_{\alpha}(\bu)=\int_{H}\phi(\bu)\rd \mu(\bu),\]
for all $\phi\in \mathcal C_b(H_w)$.

\medskip
\begin{theorem}\label{mainthmstatss}
Let $\{\mu_{\alpha}\}_{\alpha>0}$ be a family such that, for each $\alpha>0$,
$\mu_{\alpha}$ is a stationary $\alpha$-Vishik-Fursikov statistical
solution. Suppose that 
$\int_H|\bu|^2d\mu_{\alpha}(\bu)\leq C$, for all
$\alpha>0$, for some $C\geq 0$. Then, there exists a sequence 
$\{\mu_{\alpha_n}\}_n\subset \{\mu_{\alpha}\}_{\alpha>0}$, with
$\alpha_n\rightarrow 0$, that converges  to a Borel
probability measure $\mu$ on $H$.
Moreover, $\mu$ is a stationary Vishik-Fursikov statistical solution.
\end{theorem}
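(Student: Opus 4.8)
The plan is to reduce Theorem \ref{mainthmstatss} to the time-dependent convergence result, Proposition \ref{prop5}, and then to check that invariance under the time-shift semigroup $\{\sigma_\tau\}_{\tau\ge0}$ survives the passage to the limit. For each $\alpha>0$, by definition of a stationary $\alpha$-Vishik-Fursikov statistical solution there is an interval $I_\alpha$ unbounded on the right and an invariant $\alpha$-Vishik-Fursikov measure $\rho_\alpha$ over $I_\alpha$ with $\mu_\alpha=\Pi_t\rho_\alpha$ for all $t\in I_\alpha$. After a time translation we may take $I_\alpha$ to be $[0,\infty)$ or $(0,\infty)$, and after replacing $\rho_\alpha$ by its restriction $\Pi_{(0,\infty)}\rho_\alpha$ we may assume that all the $\rho_\alpha$ are invariant $\alpha$-Vishik-Fursikov measures over the single interval $I:=(0,\infty)$, which is open on the left, still with $\mu_\alpha=\Pi_t\rho_\alpha$ for every $t\in I$. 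By invariance, $\Pi_t\rho_\alpha=\mu_\alpha$ is independent of $t$, so the change of variables formula \eqref{induced-measure} gives
\[
\int_{\mathcal Y_I}|\bw(t)|^2\,d\rho_\alpha(\bw)=\int_H|\bu|^2\,d\mu_\alpha(\bu)\le C,\qquad\forall t\in I,\ \forall\alpha>0;
\]
in particular $\sup_{t\in I}\int_{\mathcal Y_I}|\bw(t)|^2\,d\rho_\alpha(\bw)\le C$ for all $\alpha>0$, which is exactly the hypothesis of Proposition \ref{prop5}.

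Applying Proposition \ref{prop5}, I obtain a sequence $\alpha_n\to0$ and a Borel probability measure $\rho$ on $\mathcal Y_I$ with $\rho_{\alpha_n}\wsconv\rho$ in $\mathcal P(\mathcal Y_I)$, $\rho$ carried by $\mathcal U_I^\sharp$, and — since $I$ is open on the left, so $\mathring I=I$ — $\rho$ a Vishik-Fursikov measure over $I$. The remaining point is that $\sigma_\tau\rho=\rho$ for all $\tau\ge0$. To see this, I would first note that each $\sigma_\tau$ is continuous on $\mathcal C_{loc}(I,H_w)$ and maps $\mathcal Y_I$ into itself, because the a priori bounds defining the sets $\mathcal Y_J(R)$ are invariant under time translation; hence $\varphi\circ\sigma_\tau\in\mathcal C_b(\mathcal Y_I)$ whenever $\varphi\in\mathcal C_b(\mathcal Y_I)$. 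Using the invariance $\sigma_\tau\rho_{\alpha_n}=\rho_{\alpha_n}$ together with the convergence $\rho_{\alpha_n}\wsconv\rho$ tested against $\varphi\circ\sigma_\tau$ and against $\varphi$,
\[
\int_{\mathcal Y_I}\varphi\,d(\sigma_\tau\rho)=\int_{\mathcal Y_I}\varphi\circ\sigma_\tau\,d\rho=\lim_{n\to\infty}\int_{\mathcal Y_I}\varphi\circ\sigma_\tau\,d\rho_{\alpha_n}=\lim_{n\to\infty}\int_{\mathcal Y_I}\varphi\,d\rho_{\alpha_n}=\int_{\mathcal Y_I}\varphi\,d\rho.
\]
Since $\mathcal Y_I$ is a completely regular Hausdorff space (Lemma \ref{spaceyi}) and both $\rho$ and $\sigma_\tau\rho$ are tight ($\rho$ by Proposition \ref{prop5}, and $\sigma_\tau\rho$ as the image of a tight measure under a continuous map), the uniqueness criterion \eqref{uniq-measu} yields $\sigma_\tau\rho=\rho$. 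Thus $\rho$ is an invariant Vishik-Fursikov measure over $I$.

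It then remains to transfer this back to $H$. I would set $\mu:=\Pi_t\rho$, which by the invariance of $\rho$ does not depend on $t\in I$ and is, by definition, a stationary Vishik-Fursikov statistical solution (a Borel probability on $H$, equivalently on $H_w$). For the convergence $\mu_{\alpha_n}\wsconv\mu$ in $\mathcal P(H_w)$: given $\phi\in\mathcal C_b(H_w)$, the function $(\phi\circ\Pi_t)|_{\mathcal Y_I}$ belongs to $\mathcal C_b(\mathcal Y_I)$, so
\[
\int_H\phi(\bu)\,d\mu_{\alpha_n}(\bu)=\int_{\mathcal Y_I}\phi(\bu(t))\,d\rho_{\alpha_n}(\bu)\;\longrightarrow\;\int_{\mathcal Y_I}\phi(\bu(t))\,d\rho(\bu)=\int_H\phi(\bu)\,d\mu(\bu),
\]
which is precisely the asserted convergence of $\{\mu_{\alpha_n}\}$ to $\mu$.

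The step I expect to be the most delicate is the normalization in the first paragraph: reducing the (possibly $\alpha$-dependent) intervals to one fixed interval that is open on the left. This is what makes the difference, because on an interval closed and bounded on the left Proposition \ref{prop5} only produces a Vishik-Fursikov measure over the interior, and recovering condition (iii) of Definition \ref{vfmdef} — continuity at the left endpoint — would require additional hypotheses of the type in Theorem \ref{mtheo6}, which are unavailable here. Stationarity is exactly what circumvents this: an invariant measure over a left-closed interval restricts to an invariant measure over its (left-open) interior, and a time translation normalizes the endpoint. Every other ingredient — continuity and $\mathcal Y_I$-invariance of $\sigma_\tau$, the boundedness and measurability of the test functions $\varphi\circ\sigma_\tau$ and $(\phi\circ\Pi_t)|_{\mathcal Y_I}$, and the change of variables formula — is routine.
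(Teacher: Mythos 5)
Your proof is correct and follows essentially the same route as the paper's: stationarity plus the change of variables formula gives the uniform-in-time bound on the mean kinetic energy needed to invoke Proposition \ref{prop5}, invariance is transferred to the limit by testing the weak-star convergence against $\varphi\circ\sigma_\tau$ (using that $\sigma_\tau$ maps $\mathcal Y_I$ continuously into itself) together with the uniqueness criterion \eqref{uniq-measu}, and the projection $\mu=\Pi_t\rho$ then yields the stationary Vishik-Fursikov statistical solution and the convergence of $\mu_{\alpha_n}$. Your preliminary normalization of the intervals to the common left-open interval $(0,\infty)$ is extra care that the paper omits by tacitly working over one fixed interval $I$ (and it neatly avoids the $\mathring I$ versus $I$ issue at a closed left endpoint); just note that the time-translation part of that normalization implicitly uses that translates of solutions are solutions, i.e.\ a time-independent force, which is the natural setting for stationary statistical solutions.
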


\begin{proof}
Since $\mu_{\alpha}$ is a stationary $\alpha$-Vishik-Fursikov statistical
solution, for each $\alpha>0$, there exists an invariant Vishik-Fursikov
measure $\rho_{\alpha}$ over an interval $I$ unbounded on the right, such
that $\mu_{\alpha}=\Pi_t\rho_{\alpha}$, at any time $t\in I$. Hence, using
the Change of Variables Theorem it follows that
\[\int_{\mathcal
Y_I}|\bu(t)|^2\rd\rho_{\alpha}(\bu)=\int_{H}|\bu|^2\rd\mu_{\alpha}(\bu), \;
\forall t\in I, \; \forall \alpha>0.\]
From the hypothesis $\int_H|\bu|^2\rd\mu_{\alpha}(\bu)\leq C$, for
all $\alpha>0$, and the last identity, we obtain that
\[\sup_{t\in I}\int_{\mathcal
Y_I}|\bu(t)|^2\rd\rho_{\alpha}(\bu)\leq C,\; \forall \alpha>0.\]
Thus, we can apply Proposition
\ref{prop5} to obtain a sequence,
$\{\rho_{\alpha_n}\}_n\subset \{\rho_{\alpha}\}_{\alpha>0}$, with
$\alpha_n\rightarrow 0$, and a Vishik-Fursikov measure $\rho$ over $I$ such
that
$\rho_{\alpha_n} \stackrel{*}{\rightharpoonup} \rho$ in $\mathcal P(\mathcal
Y_I)$. First, let us check that $\rho$ is an invariant Vishik-Fursikov
measure, which by \eqref{uniq-measu} is equivalent to show that 
\[\int_{\mathcal Y_I}\varphi(\sigma_\tau\bu)\rd
\rho(\bu)=\int_{\mathcal Y_I}\varphi(\bu)\rd \rho(\bu), \mbox{ for all
}\varphi\in \mathcal C_b(\mathcal Y_I).\]
We already have that for all $\varphi\in \mathcal C_b(\mathcal Y_I)$,
\[\int_{\mathcal Y_I}\varphi(\sigma_\tau\bu)\rd
\rho_{\alpha_n}(\bu)=\int_{\mathcal Y_I}\varphi(\bu)\rd \rho_{\alpha_n}(\bu).\]
Note also that $\varphi\circ \sigma_\tau$ belongs to  $\mathcal
C_b(\mathcal Y_I)$ for all  $\varphi\in \mathcal C_b(\mathcal Y_I)$, so that,
since $\rho_{\alpha_n} \stackrel{*}{\rightharpoonup} \rho$ in $\mathcal
P(\mathcal Y_I)$, we obtain that
\begin{equation*}
\int_{\mathcal Y_I}\varphi(\sigma_\tau\bu)\rd
\rho(\bu)=\lim_{n\rightarrow \infty}\int_{\mathcal
Y_I}\varphi(\sigma_\tau\bu)\rd
\rho_{\alpha_n}(\bu)=\lim_{n\rightarrow \infty}\int_{\mathcal
Y_I}\varphi(\bu)\rd \rho_{\alpha_n}(\bu)=\int_{\mathcal
Y_I}\varphi(\bu)\rd \rho(\bu).
\end{equation*}

Define $\mu:=\Pi_t\rho$, for an arbitrary time $t\in I$. Then, $\mu$ is a
stationary Vishik-Fursikov
statistical solution by definition. Moreover, $\mu_{\alpha_n}
\stackrel{*}{\rightharpoonup} \mu$ in $\mathcal P(H_w)$. Indeed, let $\phi\in
\mathcal C_b(H_w)$, then $\varphi:=\phi\circ \Pi_t$ belongs to $\mathcal
C_b(\mathcal Y_I)$ and  
\[\lim_{n\rightarrow
\infty}\int_H\phi(\bu)\rd\mu_{\alpha_n}(\bu)=\lim_{n\rightarrow
\infty}\int_{\mathcal Y_I}\varphi(\bu)\rd
\rho_{\alpha_n}(\bu)=\int_{\mathcal Y_I}\varphi(\bu)\rd
\rho(\bu)=\int_{H}\phi(\bu)\rd \mu(\bu).\]
This concludes the proof.
\end{proof}

\section{Conclusions}
We have proved that, under natural conditions, families of statistical solutions
of the 3D Navier-Stokes-$\alpha$ model, depending on the parameter $\alpha$,
possess, as $\alpha$ goes to zero,  subsequences that converge to a statistical
solution of the 3D Navier-Stokes equations. The main condition for the
convergence is that the mean kinetic energy of the family of statistical
solutions be uniformly bounded. This yields suitable tightness and compactness
properties needed for the existence of a convergent subsequence. 

The statistical solutions contain the statistical information of a given flow,
including turbulent flows, which are the case of most interest. It is therefore
natural to study conditions that guarantee that the statistical information
obtained from an approximate problem converges to the original problem in a
suitable sense. We succeeded in proving this convergence under simple and
natural conditions. This result implies that the statistical information
obtained from the 3D Navier-Stokes-$\alpha$ model are good approximations of the
statistical information of flows modelled by the 3D Navier-Stokes equations.

Moreover, the techniques developed in this paper seem to allow for an extension
of the result to a wide range of models and approximations. This is currently a
work in progress and will be presented elsewhere. 

\medskip
{\bf Acknowledgements.} The authors would like to thank Cec\'{\i}lia
Mondaini for
enriching the discussions related to this work and F\'abio Ramos for drawing our
attention to the work of Topsoe. This work was partly supported by CNPq,
Bras\'{\i}lia, Brazil, under the grants  \#151082/2010-3, \#307077/2009-8,
\#477379/2010-9.
\medskip


\footnotesize

\end{document}